\numberwithin{equation}{section}
\theoremstyle{plain}
\newtheorem{lemma}{Lemma}[section]
\newtheorem{thm}[lemma]{Theorem}
\newtheorem{coro}[lemma]{Corollary}
\newtheorem{remark}[lemma]{Remark}
\newtheorem{prop}[lemma]{Proposition}
\newtheorem{assumption}{Assumption}
\begin{document}
\title[Central limit theorem for full discretization]{Central limit theorem for full discretization of parabolic SPDE}
\subjclass[2010]{60F05, 60H15, 60H35, 37M25}
\author{Chuchu Chen, Tonghe Dang, Jialin Hong, Tau Zhou}
\address{LSEC, ICMSEC,  Academy of Mathematics and Systems Science, Chinese Academy of Sciences, Beijing 100190, China,
\and 
School of Mathematical Sciences, University of Chinese Academy of Sciences, Beijing 100049, China}
\email{chenchuchu@lsec.cc.ac.cn; dangth@lsec.cc.ac.cn; hjl@lsec.cc.ac.cn;
zt@lsec.cc.ac.cn}
\thanks{This work is funded by the National key R$\&$D Program of China under Grant (No. 2020YFA0713701), National Natural Science Foundation of China (No. 11971470,
No. 11871068, No. 12031020, No. 12022118), and by Youth Innovation Promotion Association CAS}
\begin{abstract}
 
 In order to characterize the fluctuation between the ergodic limit and the time-averaging estimator of a full discretization in a quantitative way, we
establish a central limit theorem for the full discretization of the parabolic stochastic partial differential equation. The theorem shows that the normalized 
time-averaging estimator converges to a normal distribution with the variance being the same as that of the continuous case, 
where the scale used for the normalization  corresponds to the temporal strong convergence order of the considered full discretization.
A key ingredient in the proof is to extract an appropriate 
martingale difference series sum from the normalized time-averaging estimator so that the convergence to the normal distribution of such a sum and the convergence to zero in probability
 of the remainder are well balanced. The main novelty of our method to balance the convergence lies in proposing an appropriately modified Poisson equation so as to possess the space-independent regularity estimates.
 As a byproduct, the full discretization is shown to fulfill the weak law of large numbers, namely,
 the time-averaging estimator converges to the ergodic limit in probability. 
\end{abstract}
\keywords {Central limit theorem $\cdot$ Stochastic partial differential equation $\cdot$ Full discretization $\cdot$ Poisson equation $\cdot$ Invariant measure}
\maketitle
\section{Introduction}
This paper focuses on 
characterizing quantitatively the fluctuation between the ergodic limit and the time-averaging estimator of a full discretization, namely, establishing
the central limit theorem (CLT) for the full discretization of the parabolic stochastic partial differential equation (SPDE).
Concerning the following parabolic SPDE 
driven by additive noise
\begin{align}\label{spde}
\mathrm{d}X(t)=AX(t)\mathrm{d}t+F(X(t))\mathrm{d}t+\mathrm{d}W(t), \quad X(0)=X_0\in H,
\end{align}
where $H:=L^2((0,1);\mathbb{R})$ endowed with the usual inner product $\langle\cdot,\cdot\rangle$ and the norm $\|\cdot\|,$ and $A:\mathrm{Dom}(A)\subset H\rightarrow H$ is the Laplacian with homogeneous Dirichlet boundary conditions. The stochastic process $\{W(t)\}_{t\ge 0}$ is a generalized $Q$-Wiener process on a filtered probability space $(\Omega,\mathcal{F},\left\{\mathcal{F}_t\right\}_{t\ge 0},\mathbb{P})$. Precise assumptions of the stochastic process $\{W(t)\}_{t\ge 0}$ are given in Assumption \ref{assump_Q}. The drift coefficient $F$ is the Nemytskii operator satisfying a dissipation condition, which ensures that 
 \eqref{spde} possesses a unique ergodic invariant measure $\pi$.
 One of the fundamental
problems in the classical probability theory, which is closely related to the invariant measure $\pi$, is the asymptotic behavior of the functional $\int_0^Th(X(t))\mathrm{d}t$ as $T\rightarrow \infty$ for $h:H\rightarrow \mathbb{R}$ in a 
class of reference functions.
For \eqref{spde}, it is proved in \cite{clt12} that under some conditions,
the time average $\frac{1}{T}\int_0^Th(X(t))\mathrm{d}t$ converges to the ergodic limit $\pi(h)$ in probability and the normalized fluctuation around $\pi(h)$ can be described by a centered Gaussian random variable,
 i.e. the exact solution fulfills the weak law of large numbers (LLNs) $$\lim_{T\rightarrow\infty}\frac{1}{T}\int_0^Th(X(t))\mathrm{d}t= \pi(h) \text{ in probability,}$$ 
and 
the CLT $$\lim_{T\rightarrow\infty}\frac{1}{\sqrt{T}}\int_0^T[h(X(t))-\pi(h)]\mathrm{d}t=\mathcal{N}(0,\tilde{\sigma}^2)\text{ in distribution.}$$
The variance $\tilde{\sigma}^2$ can be written as $\tilde{\sigma}^2=\pi(\|Q^{\frac{1}{2}}D\phi(\cdot)\|^2),$ where $\phi$ is a solution of the associated Poisson's equation
$
 \mathcal{L}\phi(x)=h(x)-\pi(h)
$
with $\mathcal{L}$ being the generator of \eqref{spde}. See Proposition \ref{CLTexact} for the details. 

In general, ergodic limits of SPDEs can not be solved exactly, thus numerical approximations are proposed and studied 
(see e.g. \cite{B14, BK17, CHW17, czh20, CHS21, hwbook, HWZ17} and references therein). 
One common approach is constructing the time-averaging estimator $m^{-1}\sum_{k=0}^{m-1}h(\mathbb{X}(t_k))$ of a numerical discretization $\{\mathbb{X}(t_k)\}_{k\ge 0}$ of \eqref{spde} with $m>0$ being an integer. Under some conditions, this time-averaging estimator converges to $\pi(h)$ in the mean sense with a certain order with respect to mesh sizes and $m$. 
 It is interesting to further study  
such a convergence in probability and 
characterize the fluctuation
 in a quantitative way for a numerical discretization of \eqref{spde}, namely, 
to solve the following questions:
\begin{itemize}
\item[(\romannumeral1)] 
Does the time-averaging estimator converge to the ergodic limit in probability? Namely, dose the weak LLNs hold?
\item[(\romannumeral2)] Does the time-averaging estimator under certain normalization converge to some normal distribution? Namely, does the CLT hold?
\item[(\romannumeral3)] If (\romannumeral2) holds, what is the 
relation of the variances of the normal distributions corresponding to the numerical discretization and the exact solution?
\end{itemize}

For finite dimensional stochastic differential equations, much progress has been made on the above questions, see e.g. \cite{highorder, cltsde, siam10, AAP12} and references therein.
 For instance, in \cite{siam10}, error estimates (both in the mean square sense and in the a.s.-sense) for time-averaging estimator of the generalized Euler--Maruyama (EM) method on the torus are obtained, so
 both the weak and strong LLNs are implied. Authors in \cite{AAP12} establish the CLT for the Euler scheme with decreasing step of a wide class of Brownian ergodic diffusions.
In \cite{highorder}, sufficient conditions for approximating the invariant measure with high order of accuracy for a numerical method are
proposed and the order is proved in the a.s.-sense, which implies the strong LLNs. 
Recently, 
authors in \cite{cltsde} prove the CLT and self-normalized Cram\'er-type moderate deviation of the time-averaging estimator for EM method. 
However,
to our knowledge,
less is known for the LLNs and the CLT of numerical discretizations of infinite dimensional systems.
The main purpose of this paper is to study the weak LLNs and the CLT for a full discretization of \eqref{spde}.

To this end, we first apply the spectral Galerkin method in spatial direction and the exponential integrator in temporal direction to \eqref{spde} to obtain the full discretization. Under the dissipation assumption, this full discretization admits a unique invariant measure. Based on the time-independent a priori estimates of the full discretization, the time-independent strong convergence order of this full discretization is obtained under the usual correlation assumption $\|(-A)^{\frac{\beta-1}{2}}Q^{\frac{1}{2}}\|_{\mathcal{L}_2(H)}<\infty\,(0<\beta\leq 1)$, 
i.e., order $\beta$ in spatial direction and  $\frac{\beta}{2}$ in temporal direction. With this full discretization, we construct the time-averaging estimator $\Pi^{N}_{\tau}(h):=m^{-1}\sum_{k=0}^{m-1}h(X^N_k)$ with $\{X^N_k\}_{k\in\mathbb{N}},N$ and $\tau$ being the numerical solutions, the dimension of the spectral Galerkin projection space and the temporal step size of the full discretization, respectively. The main contribution of this paper is to prove the following CLT: For $h\in \mathcal{C}_b^4(H;\mathbb{R}),$
$$\tau^{-\frac{\beta}{2}}\big[\Pi^N_{\tau}(h)-\pi(h)\big]\rightarrow \mathcal{N}\big(0,\pi(\|Q^{\frac{1}{2}}D\phi(\cdot)\|^2)\big)$$ in distribution as $\tau$ goes to zero under a coupling condition for $\tau$ and $N$ and the assumption  $m:=\lfloor\tau^{-1-\beta}\rfloor$.
 We remark that 
  the scale $\tau^{\frac{\beta}{2}}$ in the normalized time-averaging estimator $\tau^{-\frac{\beta}{2}}\big[\Pi^N_{\tau}(h)-\pi(h)\big]$ corresponds to the temporal strong convergence order of the full discretization, and the variance is the same as that for the case of the exact solution. We refer to Theorem \ref{main} for details. As a byproduct, the weak LLNs holds, i.e.,
$[\Pi^N_{\tau}(h)-\pi(h)]\rightarrow 0$
in probability as $\tau$ goes to zero.

In order to prove the desired CLT, we need to extract an appropriate 
martingale difference series sum from the normalized time-averaging estimator so that the convergence to the normal distribution of such a sum and the convergence to zero in probability
 of the remainder are well balanced.
 The main novelty for proving the CLT lies in proposing the modified Poisson equation
$
 \mathcal{L}\phi_{\tau,\epsilon}(x)=\tau^{\frac{\beta}{2}-\epsilon}\phi_{\tau,\epsilon}(x)+h(x)-\pi(h)
$, which depends on a real parameter  $\epsilon\in(0,\frac{\beta}{2})$. This modified Poisson's equation has some features as follows.
First, the martingale difference series sum $\mathcal{M}^N_{\tau}$ is extracted by using this modified Poisson's equation and is shown to converge to $\mathcal{N}\big(0,\pi(\|Q^{\frac{1}{2}}D\phi(\cdot)\|^2)\big)$ in distribution as $\tau$ goes to zero for any fixed choice $\epsilon\in(0,\frac{\beta}{2}).$ Second, this modified Poisson's equation is an approximation of the associated Poisson equation and is shown to possess both the space-dependent and the space-independent regularity estimates. Third, by fully utilizing these regularity estimates, we can give a small range of choices for $\epsilon$ so that the remainder after the extraction converges to zero in probability as $\tau$ goes to zero.
We would like to mention that a crucial step to show the convergence in probability for the remainder is establishing sufficient conditions such that 
exponential functionals of Brownian motions are bounded in the mean sense, by making the best of the independent increments property of terms with Brownian motion and properties of the conditional expectation.
Through selecting a proper parameter $\epsilon$ in an overlapped interval, the desired CLT is finally obtained.

The outline of this paper is as follows. In the next section, some preliminaries and the full discretization of \eqref{spde} are given, and three Poisson-type equations as well as their properties are introduced. We also present the CLT and the weak LLNs for the exact solution of \eqref{spde}. In Section \ref{mainresult}, we give the main result of this article and show the strategy for proving the main theorem.
In Section \ref{sec3}, we prove properties of Poisson-type equations.
Section \ref{sec4} is devoted to proofs of propositions listed in Section \ref{mainresult}. The proofs of the a priori estimates and the convergence order of the full discretization  are given in the appendix. 
\section{Preliminaries}
In this section, we first give assumptions on $A,\, F,\,W$ and the initial datum. Then we introduce the full discretization, 
and give some properties, including the existence and uniqueness of the invariant measure, the time-independent a priori estimates as well as the convergence order of the full discretization. Three Poisson-type equations as well as their properties are introduced.
At the end of this section, we present the CLT and the weak LLNs for the exact solution of \eqref{spde}. 

Throughout this article, we use $C$ to denote an unspecified positive and finite constant which may not be the same in each occurrence. More specific constants which are dependent on certain parameters $a,b,\ldots$ are numbered as $C(a,b,\ldots).$ Let the parameter $\gamma$ be a sufficiently small positive number which may vary from one place to another.
In the sequel, denote by $\overset{d}{\longrightarrow}$ and $\overset{\mathbb{P}}{\longrightarrow}$ the convergence in distribution and in probability, respectively.

\subsection{Settings}
Let $\mathcal{O}:=(0,1),$ and $H^s(\mathcal{O}),$ $H^s_0(\mathcal{O})$ with $s>0$ be the usual Sobolev spaces.
Then the domain of the operator $A$ is ${\rm Dom}(A):=H^2(\mathcal{O})\cap H^1_0(\mathcal{O})$, and there is a sequence of real numbers $\lambda_i=\pi^2 i^2, i\in\mathbb{N}_+$ and an orthonormal basis $\left\{e_i(x)=\sqrt{2}\sin(i\pi x),x\in\mathcal{O}\right\}_{i\in\mathbb{N}_+}$ of $H$ such that $-Ae_i=\lambda_ie_i.$ It is known that $-A$ is a positive, self-adjoint and densely defined operator, and $A$ generates the analytic semigroup $\{S(t):=e^{tA},t\ge 0\}$ on $H$. For $s\in\mathbb{R},$ define the Hilbert space $\dot{H}^{s}:=\mathrm{Dom}((-A)^{\frac{s}{2}}),$ equipped with the inner product $\langle\cdot,\cdot\rangle_{s}:=\langle (-A)^{\frac{s}{2}}\cdot,(-A)^{\frac{s}{2}}\cdot\rangle$ and the norm $\|\cdot\|_{s}:=\langle\cdot,\cdot\rangle^{\frac{1}{2}}_{s}$. For $s_2>s_1\ge 0,$ $\|\cdot\|_{s_1}\leq  \|\cdot\|_{s_2}.$ Denote by $\mathcal{C}^k_b:=\mathcal{C}^k_b(H;\mathbb{R})$ the bounded functions that are $k$th continuously differentiable with bounded derivatives up to order $k$, and $\|h\|_{k,\infty}:=\sup_{0\leq j\leq k}\{\|h\|_j\}$ with $\|h\|_0=\sup_{x\in H}|h(x)|,\;\|h\|_1=\sup_{x\in H}\|Dh(x)\|$ and so on. Furthermore, $\|\cdot\|_{\mathcal{L}(H)},\|\cdot\|_{\mathcal{L}_2(H)}$ and $\mathrm{Tr(\cdot)}$ denote the usual operator norm, Hilbert--Schmidt operator norm and the trace of an operator, respectively.

It is well-known that there is a positive constant $C$ such that
\begin{align}
\|(-A)^{\nu}S(t)\|_{\mathcal{L}(H)}&\leq Ct^{-\nu}e^{-\frac{\lambda_1}{2}t},\quad t>0,\;\nu\ge 0,\label{semigroup1}\\
\|(-A)^{-\rho}(S(t)-S(s))\|_{\mathcal{L}(H)}&\leq C(t-s)^{\rho}e^{-\frac{\lambda_1}{2}s},\quad 0\leq s\le t,\;\rho\in[0,1],\label{semigroup2}
\end{align}
and
\begin{align}\label{semigroup3}
\int_s^t\|(-A)^{\frac{\rho}{2}}S(t-r)\|_{\mathcal{L}(H)}^2\,\mathrm{d}r\leq C(t-s)^{1-\rho},\quad 0\leq s\leq t,\;\rho\in[0,1].
\end{align}

We make the following assumptions on the drift coefficient $F$, the stochastic process $\{W(t)\}_{t\ge 0},$ and the initial datum $X_0$.
\begin{assumption}\label{assumpF_1}
Assume that for any $x,y\in H,$
\begin{align*}
\langle x-y,F(x)-F(y)\rangle \leq K\|x-y\|^2 
\end{align*}
with $K<\lambda_1$ and 
$\|DF(y)\|\leq L_F$
 with $L_{F}>0.$
\end{assumption}

\begin{assumption}\label{assump_Q}
Assume that $\{W(t)\}_{t\ge 0}$ is a generalized $Q$-Wiener process on a filtered probability space $(\Omega,\mathcal{F},\{\mathcal{F}_t\}_{t\ge 0},\mathbb{P})$, which can be represented as 
\begin{align*}
W(t):=\sum_{j=1}^{\infty}Q^{\frac{1}{2}}e_j\beta_j(t),
\end{align*}
where $\{\beta_j(t),t\ge 0\}_{j\in\mathbb{N}_+}$ is a sequence of independent real-valued standard Brownian motions. Here, $Q\in\mathcal{L}(H)$ is self-adjoint, positive definite and invertible,
which commutes with $A$ and satisfies the following two conditions:
\begin{itemize}
\item
[(\romannumeral1)] 
$
\|(-A)^{\frac{\beta-1}{2}}Q^{\frac{1}{2}}\|_{\mathcal{L}_2(H)}<\infty
$
for some $\beta\in(0,1]$ and $\|Q^{-\frac{1}{2}}(-A)^{-\frac{1}{2}}\|_{\mathcal{L}(H)}<\infty$.
\item
[(\romannumeral2)] $\mathrm{Range}((-A)^{-\frac{\varepsilon}{2}})\subset\mathrm{Range}(Q^{\frac{1}{2}})$ with some $\varepsilon<1.$
\end{itemize}
\end{assumption}

For instance, $Q=(-A)^{\kappa},$ $-1<\kappa<\frac{1}{2}-\beta$ satisfies Assumption \ref{assump_Q}. Then by choosing a suitable $\kappa,$ two important cases are included, namely, the trace-class noise case for $\beta=1$ and the space-time white noise case (i.e., $Q=I$) for $\beta<\frac{1}{2}.$ We remark that the condition (\romannumeral2) in Assumption \ref{assump_Q} is needed to have the regularizing effect of the Markov semigroup associated to \eqref{spde} (see \cite[Hypothesis $4.1$]{pde01}). 
\begin{assumption}\label{assumpX_0}
Assume that the $\mathcal{F}_0$-measurable initial datum $X_0$ satisfies that $\mathbb{E}e^{\vartheta\|X_0\|^2}<\infty$ for some $\vartheta>0$, and $X_0\in L^{\vartheta_1}(\Omega;\dot{H}^{\beta})$ for $\vartheta_1\ge 2$ being sufficiently large.\end{assumption}

\subsection{Full discretization}
In this subsection, we first present the considered semi-discretization and full discretization for \eqref{spde}, which based on the spectral Galerkin method in spatial direction and the exponential integrator in temporal direction. Then we give the existence and uniqueness of the invariant measures and the a priori estimates of numerical solutions. In the sequel, let $\tau\in (0,\frac{1}{2}),\,t_k:=k\tau,\,k\in \mathbb{N}$ and $\lfloor t\rfloor_{\tau}:=\max\{t_k:t_k\leq t\},$ i.e., the maximum grid point before time $t$.

Applying the spectral Galerkin method to \eqref{spde}, we get the semi-discretization
\begin{align}\label{semidiscrete}
\mathrm{d}X^N(t)=A^NX^N(t)\mathrm{d}t+F^N(X^N(t))\mathrm{d}t+P^N\mathrm{d}W(t),\quad X^N(0)=P^NX_0,
\end{align}
where $P^N: H\rightarrow H_N:=\mathrm{span}\{e_1,\ldots,e_N\}$ is the spectral Galerkin projection defined by $P^Nx=\sum_{i=1}^N\langle x,e_i\rangle e_i$ for $x\in H$,  $A^N=AP^N$ and $F^N=P^NF.$ 
We discretize \eqref{semidiscrete} in temporal direction by the exponential integrator to get the full discretization
\begin{align}\label{fulldiscrete}
X^N_{k+1}=S^N(\tau)X^N_k+\tau S^N(\tau)F^N(X^N_k)+S^N(\tau) P^N\Delta W_{k},\quad X^N_0=P^NX_0,
\end{align}
where $\{S^N(t):=e^{t A^N},t\ge 0\}$ is the semigroup generated by $A^N$, and $\Delta W_{k}=\sum_{j=1}^{\infty}Q^{\frac{1}{2}}e_j\Delta_{k}\beta_j$ with $\Delta_{k}\beta_j=\beta_j(t_{k+1})-\beta_j(t_k),k\in\mathbb{N}.$ The continuous version of the full discretization is
\begin{align}\label{contiversion}
X^{N,\tau}_t=S^N(t)X^N_0+\int_0^tS^N(t-\lfloor s\rfloor_{\tau})F^N(X^{N,\tau}_{\lfloor s\rfloor_{\tau}})\mathrm{d}s+\int_0^tS^N(t-\lfloor s\rfloor_{\tau})P^N\mathrm{d}W(s),
\end{align}
which solves
\begin{align}\label{differential}
\mathrm{d}X^{N,\tau}_t=A^NX^{N,\tau}_t\mathrm{d}t+S^N(t-\lfloor t\rfloor_{\tau})F^N(X^{N,\tau}_{\lfloor t\rfloor_{\tau}})\mathrm{d}t+S^N(t-\lfloor t\rfloor_{\tau})P^N\mathrm{d}W(t),\; X^{N,\tau}_0=X^N_0.
\end{align}

 The well-posedness as well as the existence and uniqueness of invariant measures of the exact solution of \eqref{spde} and the semi-discrete numerical solution of \eqref{semidiscrete} are well-known. For our convenience, we list them without proofs, and refer to \cite[Section $4$]{B14}, \cite[Section $4$]{BK17} for more details. In the sequel, we denote by $X(t,X_0)$ and $X^N(t,X^N_0)$ the solutions of \eqref{spde} and \eqref{semidiscrete} at time $t$ with initial data being $X_0$ and $X^N_0,$ respectively. Sometimes, we abbreviate them as $X(t)$ and $X^N(t)$ when 
the initial data are not emphasized. Denote $\bar{\pi}(\psi):=\int \psi(x)\bar{\pi}(\mathrm{d}x)$ the integral of $\psi$ with respect to the probability measure $\bar{\pi}.$
%there is no confuse for the initial datum.
\begin{prop}\label{propexact}
Let Assumptions \ref{assumpF_1}, \ref{assump_Q} and \ref{assumpX_0} hold. Then \eqref{spde} has a unique mild solution given by
\begin{align*}
X(t)=S(t)X_0+\int_0^tS(t-s)F(X(s))\mathrm{d}s+\int_0^tS(t-s)\mathrm{d}W(s)\quad a.s.,\; t\ge 0,
\end{align*}
and it admits a unique
invariant measure $\pi.$ Moreover, 
for $p\ge2$ and $\psi\in\mathcal{C}_b,\; t\ge 0,$
\begin{align}
&\sup_{t\ge 0}\mathbb{E}\|X(t,X_0)\|^p\leq C(\|(-A)^{\frac{\beta-1}{2}}Q^{\frac{1}{2}}\|_{\mathcal{L}_2(H)},K,p)(1+\mathbb{E}\|X_0\|^p),\notag\\
&\mathbb{E}\|X(t,X_0)-X(t,\bar{X}_0)\|^p\leq C(p)\mathbb{E}\|X_0-\bar{X}_0\|^pe^{-\frac{1}{2}(\lambda_1-K)pt},\notag\\
&|\mathbb{E}\psi(X(t,X_0))-\pi(\psi)|\leq C\|\psi\|_{\infty}(1+\mathbb{E}\|X_0\|^2)e^{-\frac{1}{2}(\lambda_1-K)t},\label{exactmeas}
\end{align}
 and $\pi (\|\cdot\|^p)<\infty,$ where $X_0$ and $\bar{X}_0$ are two different initial data. 
\end{prop}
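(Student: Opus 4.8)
This statement is classical and the authors indeed only cite \cite{B14,BK17}; nevertheless, the mechanism behind each assertion is clear, and I would organize the argument in four steps driven by the dissipativity in Assumption \ref{assumpF_1} and the noise regularity in Assumption \ref{assump_Q}. First I would show the stochastic convolution $Z(t):=\int_0^tS(t-s)\,\mathrm{d}W(s)$ is a well-defined $H$-valued process with time-uniform moments. Factoring $S(t-s)Q^{\frac12}=(-A)^{\frac{1-\beta}{2}}S(t-s)\,(-A)^{\frac{\beta-1}{2}}Q^{\frac12}$ and invoking \eqref{semigroup1} together with Assumption \ref{assump_Q}(i), the quantity $\|S(t-s)Q^{\frac12}\|_{\mathcal{L}_2(H)}^2$ is integrable in $s$ uniformly in $t$ because $1-\beta<1$; hence $\sup_{t\ge 0}\mathbb{E}\|Z(t)\|^2<\infty$, and Gaussianity upgrades this to all $p$-th moments and to exponential moments. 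Since $F$ is globally Lipschitz (by $\|DF(y)\|\le L_F$), the mild solution is then produced by a Banach fixed-point argument for the map $Y\mapsto S(\cdot)X_0+\int_0^{\cdot}S(\cdot-s)F(Y(s))\,\mathrm{d}s+Z(\cdot)$ on $C([0,T];L^p(\Omega;H))$, with global existence by concatenation.

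For the time-uniform moment bound I would pass to $Y:=X-Z$, which solves the pathwise noise-free equation $Y'=AY+F(Y+Z)$. Testing with $Y$, using $\langle Y,AY\rangle\le-\lambda_1\|Y\|^2$ and the splitting $\langle Y,F(Y+Z)\rangle=\langle Y,F(Y+Z)-F(Z)\rangle+\langle Y,F(Z)\rangle\le K\|Y\|^2+\|Y\|(\|F(0)\|+L_F\|Z\|)$, yields
$$\tfrac{1}{2}\tfrac{\mathrm{d}}{\mathrm{d}t}\|Y\|^2\le-(\lambda_1-K)\|Y\|^2+\|Y\|\big(\|F(0)\|+L_F\|Z\|\big).$$
Since $\lambda_1-K>0$, Gronwall's inequality gives a bound on $\|Y(t)\|$ uniform in $t$ in terms of $\|X_0\|$ and functionals of $\|Z\|$; combined with the previous step this produces $\sup_{t\ge0}\mathbb{E}\|X(t,X_0)\|^p\le C(1+\mathbb{E}\|X_0\|^p)$.

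The decisive structural fact is that the additive noise cancels in the difference $D:=X(\cdot,X_0)-X(\cdot,\bar X_0)$, which solves $D'=AD+F(X)-F(\bar X)$; dissipativity then gives $\tfrac12\tfrac{\mathrm{d}}{\mathrm{d}t}\|D\|^2\le-(\lambda_1-K)\|D\|^2$, so $\|D(t)\|\le e^{-(\lambda_1-K)t}\|D(0)\|$ almost surely, and taking $p$-th moments yields the stated exponential contraction. This contraction forces $\{P_t^{*}\delta_x\}_{t}$ to be Cauchy in the Wasserstein distance, producing a unique invariant measure $\pi$ independent of $x$; its finite moments $\pi(\|\cdot\|^p)<\infty$ follow from the uniform moment bound by Fatou's lemma along $t\to\infty$.

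For the mixing estimate \eqref{exactmeas} with Lipschitz $\psi$ the bound is immediate from the contraction: by invariance $\pi(\psi)=\int\mathbb{E}\psi(X(t,y))\,\pi(\mathrm{d}y)$, so $|\mathbb{E}\psi(X(t,X_0))-\pi(\psi)|\le\mathrm{Lip}(\psi)\int\mathbb{E}\|X(t,X_0)-X(t,y)\|\,\pi(\mathrm{d}y)$ decays exponentially. To reach a merely bounded $\psi$ with the factor $\|\psi\|_\infty$, I would exploit the regularizing (strong Feller) effect guaranteed by Assumption \ref{assump_Q}(ii): for $t\ge1$ write $\mathbb{E}\psi(X(t,\cdot))=P_{t-1}(P_1\psi)$, use the gradient estimate $\|D(P_1\psi)(x)\|\le C(1+\|x\|)\|\psi\|_\infty$ (cf. \cite[Hypothesis $4.1$]{pde01}) to treat $P_1\psi$ as Lipschitz with constant bounded by $C\|\psi\|_\infty$, and apply the Lipschitz estimate over the remaining time $t-1$. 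I expect this last step to be the main obstacle: the passage from Lipschitz to bounded measurable test functions with the $\|\psi\|_\infty$ factor is exactly where the quantitative smoothing afforded by Assumption \ref{assump_Q}(ii) is essential; by comparison, the regularity of the stochastic convolution under a possibly non-trace-class noise is handled cleanly by the factorization above and the integrability coming from $\beta>0$.
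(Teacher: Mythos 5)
The paper does not prove this proposition: it is stated as known and delegated to \cite[Section 4]{B14} and \cite[Section 4]{BK17}, so there is no in-paper argument to compare against line by line. Your reconstruction is correct and is exactly the standard route those references take, and it is also consistent with the machinery this paper deploys elsewhere: the splitting $Y=X-Z$ with the stochastic convolution is the continuous analogue of the decomposition $Z^N_k=X^N_k-V^N_k$ used in the appendix proof of Proposition \ref{propfull}, the pathwise contraction from $\langle x-y,F(x)-F(y)\rangle\le K\|x-y\|^2$ with $K<\lambda_1$ is the same mechanism behind \eqref{exp1}, and the upgrade from Lipschitz to merely bounded $\psi$ in \eqref{exactmeas} via the Bismut--Elworthy--Li formula under Assumption \ref{assump_Q}(\romannumeral2) is precisely the estimate the paper derives in Step 2 of the lemma on $u(t,x)$ (formula \eqref{dv}, which for bounded $\Phi$ gives $\|D(P_1\psi)(x)\|\le C\|\psi\|_{\infty}$, so you do not even need the $(1+\|x\|)$ factor there). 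The only point worth tightening is the last step: after writing $\mathbb{E}\psi(X(t,\cdot))=P_{t-1}(P_1\psi)$ you should integrate the contraction estimate against $\pi(\mathrm{d}y)$ and use $\pi(\|\cdot\|)<\infty$ to land on the stated bound with the factor $(1+\mathbb{E}\|X_0\|^2)$, and handle $t\le 1$ trivially by $|\mathbb{E}\psi-\pi(\psi)|\le 2\|\psi\|_{\infty}$; with that, the sketch is complete.
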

\begin{prop}\label{propsemi}
Let Assumptions \ref{assumpF_1}, \ref{assump_Q} and \ref{assumpX_0} hold. Then \eqref{semidiscrete} 
has a unique mild solution given by
\begin{align*}
X^N(t)=S^N(t)X^N_0+\int_0^tS^N(t-s)F^N(X^N(s))\mathrm{d}s+\int_0^tS^N(t-s)P^N\mathrm{d}W(s)\quad a.s.,\; t\ge 0.
\end{align*}
Moreover, for $p\ge 2,$
\begin{align*}
\sup_{t\ge0}\mathbb{E}\|X^N(t,X^N_0)\|^p\leq C(\|(-A)^{\frac{\beta-1}{2}}Q^{\frac{1}{2}}\|_{\mathcal{L}_2(H)},K,p)(1+\mathbb{E}\|X^N_0\|^p)
\end{align*}
and
\begin{align}\label{exp1}
\mathbb{E}\|X^N(t,X^N_0)-X^N(t,\bar{X}^N_0)\|^{p}\leq C(p)\mathbb{E}\|X^N_0-\bar{X}^N_0\|^{p}e^{-\frac{1}{2}(\lambda_1-K)pt},\;t\ge 0,
\end{align}
where $X^N_0$ and $\bar{X}^N_0$ are two different initial data. 
Further, \eqref{semidiscrete} admits a unique invariant measure $\pi^N$ and 
for $\psi\in\mathcal{C}_b,\; t\ge0,$
\begin{align}\label{semipi}
|\mathbb{E}\psi(X^N(t,X^N_0))-\pi^N(\psi)|\leq C\|\psi\|_{\infty}(1+\mathbb{E}\|X^N_0\|^2)e^{-\frac{1}{2}(\lambda_1-K)t},
\end{align}
and $\pi^N(\|\cdot\|^p)<\infty.$
\end{prop}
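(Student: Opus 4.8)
The plan is to treat \eqref{semidiscrete} as a finite-dimensional stochastic differential equation on $H_N$ and to exploit the dissipativity built into Assumption \ref{assumpF_1} together with the smoothing of the semigroup. Since $\|DF(y)\|\le L_F$ makes $F$ globally Lipschitz, the projected coefficients $A^N$ and $F^N=P^NF$ are globally Lipschitz on $H_N$, and the additive noise $P^N W$ has finite-dimensional (hence finite-trace) covariance; standard SDE theory then yields a unique strong solution, and because $A^N$ is a bounded operator the strong and mild formulations coincide, giving the stated variation-of-constants formula.

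For the uniform-in-time, $N$-independent moment bound I would split $X^N=Y^N+Z^N$, where $Z^N(t):=\int_0^tS^N(t-s)P^N\mathrm{d}W(s)$ is the stochastic convolution. Since $Q$ commutes with $A$ and shares the eigenbasis $\{e_i\}$ (write $Qe_j=q_je_j$), one computes $\mathbb{E}\|Z^N(t)\|^2=\sum_{j=1}^N q_j\frac{1-e^{-2\lambda_j t}}{2\lambda_j}\le\frac{1}{2}\lambda_1^{-\beta}\|(-A)^{\frac{\beta-1}{2}}Q^{\frac{1}{2}}\|_{\mathcal{L}_2(H)}^2$, using $q_j\lambda_j^{-1}=\lambda_j^{-\beta}(\lambda_j^{\beta-1}q_j)$ and $\lambda_j^{-\beta}\le\lambda_1^{-\beta}$; this is uniform in $t$ and $N$ and is exactly where the correlation condition in Assumption \ref{assump_Q}(i) enters. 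As $Z^N$ is Gaussian, all its $p$-th moments are controlled by its second moment. The remainder $Y^N=X^N-Z^N$ solves the random ODE $\dot Y^N=A^NY^N+F^N(Y^N+Z^N)$; testing with $Y^N$ and invoking $\langle y,A^Ny\rangle\le-\lambda_1\|y\|^2$ on $H_N$ together with the dissipativity $\langle Y^N,F(Y^N+Z^N)-F(Z^N)\rangle\le K\|Y^N\|^2$, the bound $K<\lambda_1$, and Young's inequality, yields a strictly negative linear drift $\frac{d}{dt}\|Y^N\|^2\le-c\|Y^N\|^2+C(1+\|Z^N\|^2)$. Grönwall's inequality and the uniform control of $Z^N$ then give $\sup_{t\ge0}\mathbb{E}\|X^N(t)\|^p\le C(1+\mathbb{E}\|X^N_0\|^p)$ with $C$ independent of $N$, the higher moments following by repeating the computation on $\|Y^N\|^p$.

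The contraction estimate \eqref{exp1} I would obtain pathwise: the additive noise cancels in $D(t):=X^N(t,X^N_0)-X^N(t,\bar X^N_0)$, which solves $\dot D=A^ND+F^N(X^N(t,X^N_0))-F^N(X^N(t,\bar X^N_0))$, so the same two ingredients give $\frac{d}{dt}\|D\|^2\le 2(K-\lambda_1)\|D\|^2$, whence $\|D(t)\|\le\|D(0)\|e^{-(\lambda_1-K)t}$; raising to the $p$-th power and taking expectations yields \eqref{exp1} (indeed with a faster rate than stated). Existence of $\pi^N$ then follows from the Krylov--Bogoliubov theorem, the uniform second moment bound making the time-averaged laws tight in the finite-dimensional space $H_N$, and uniqueness follows from \eqref{exp1}, which forces any two invariant measures to coincide.

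For the quantitative convergence \eqref{semipi}, stated for merely bounded $\psi$, the Wasserstein contraction from \eqref{exp1} does not by itself suffice, and here I expect the \emph{main obstacle}: one must upgrade from Lipschitz to bounded test functions using the regularizing (strong Feller) property of the Markov semigroup, which is precisely what Assumption \ref{assump_Q}(ii) guarantees. Concretely, writing $P^N_t\psi=P^N_{t-1}(P^N_1\psi)$ and using a Bismut--Elworthy--Li type gradient estimate $\|D(P^N_1\psi)\|_\infty\le C\|\psi\|_\infty$ that is \emph{uniform in} $N$, one applies the contraction \eqref{exp1} to the now-Lipschitz function $P^N_1\psi$ and invokes $\pi^N(P^N_1\psi)=\pi^N(\psi)$ to reach the claimed rate $e^{-\frac{1}{2}(\lambda_1-K)t}$; the bound $\pi^N(\|\cdot\|^p)<\infty$ is read off from the uniform moment estimate by integrating against $\pi^N$. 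The delicate points throughout are the $N$-uniformity of the constants — secured for the moments by semigroup smoothing plus Assumption \ref{assump_Q}(i), and for the gradient estimate by Assumption \ref{assump_Q}(ii) — rather than the dissipative estimates themselves, which are routine.
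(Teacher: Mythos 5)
Your proposal is essentially correct. Note first that the paper does not prove this proposition at all: it states explicitly that the well-posedness and ergodicity of the semi-discretization are ``listed without proofs'' with references to \cite[Section 4]{B14} and \cite[Section 4]{BK17}, so there is no in-paper argument to compare against line by line. That said, your route is exactly the one the paper itself deploys for the analogous full-discretization result in the appendix (proof of Proposition \ref{propfull}): the splitting $X^N=Y^N+Z^N$ with the stochastic convolution controlled via $\|(-A)^{\frac{\beta-1}{2}}Q^{\frac{1}{2}}\|_{\mathcal{L}_2(H)}$ and \eqref{semigroup3}, the dissipative energy estimate on the remainder using $K<\lambda_1$, the pathwise contraction for the difference of two solutions, Krylov--Bogoliubov plus contraction for existence and uniqueness of $\pi^N$, and the coupling with a stationary initial datum for the mixing bound. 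You also correctly identify the one nontrivial point, namely that \eqref{semipi} is stated for merely bounded $\psi$ (contrast \eqref{full1}, which requires $\psi\in\mathcal{C}^1_b$ and carries $\|\psi\|_{1,\infty}$), so one must interpose a Bismut--Elworthy--Li gradient bound $\|D(P^N_1\psi)\|_\infty\leq C\|\psi\|_\infty$ uniform in $N$ before applying the contraction; this is precisely the estimate the paper establishes in Step 2 of the lemma on $u(t,x)$ in Section \ref{sec3}. One small correction: that gradient bound, as derived in the paper, rests on $\|Q^{-\frac{1}{2}}(-A)^{-\frac{1}{2}}\|_{\mathcal{L}(H)}<\infty$ together with $\int_0^s\|(-A)^{\frac{1}{2}}\eta^{l,x}(r)\|^2\mathrm{d}r\leq C\|l\|^2$, i.e.\ on Assumption \ref{assump_Q}(\romannumeral1), whereas condition (\romannumeral2) is invoked for the regularizing hypotheses of \cite{pde01}; attributing the uniform gradient estimate solely to (\romannumeral2) is a minor misplacement, not a gap.
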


Below we give the existence and uniqueness of the invariant measure of \eqref{fulldiscrete}, and show the time-independent a priori estimates and the convergence order of the full discretization. To make the article self-contained, we list the proofs, which are postponed to the appendix.
\begin{prop}\label{propfull}
Let Assumptions \ref{assumpF_1}, \ref{assump_Q} and \ref{assumpX_0}   hold. 
 For $p\ge 1$ and $\tau\leq \frac{\lambda_1-K}{4L_F^2},$
 \begin{align*}
&\sup_{t\ge 0}\mathbb{E}\|X^{N,\tau}_{t}\|^{2p}_{\beta}\leq C(\|(-A)^{\frac{\beta-1}{2}}Q^{\frac{1}{2}}\|_{\mathcal{L}_2(H)},K,p)(1+\mathbb{E}\|X^N_0\|^{2p}_{\beta}),\\
&\mathbb{E}\|X^{N,\tau}_t-X^{N,\tau}_s\|^2\leq C(\|X^N_0\|_{L^2(\Omega;\dot{H}^{\beta})},\|(-A)^{\frac{\beta-1}{2}}Q^{\frac{1}{2}}\|_{\mathcal{L}_2(H)})(t-s)^{\beta},0\leq s\leq t ,\\
&\mathbb{E}\|X^N_k-\bar{X}^N_k\|^{2p}\leq \mathbb{E}\|X^N_0-\bar{X}^N_0\|^{2p}e^{-(\lambda_1-K)pt_k},\;k\in\mathbb{N},
\end{align*}
where $X^N_k$ and $\bar{X}^N_k$ are solutions of \eqref{fulldiscrete} with initial data $X^N_0$ and $\bar{X}^N_0$, respectively.
  Further, \eqref{fulldiscrete} admits a unique invariant measure $\pi^N_{\tau}$
 and for $\psi\in \mathcal{C}^1_b,\,k\in\mathbb{N},$
\begin{align}\label{full1}
|\mathbb{E}\psi(X^N_k)-\pi^N_{\tau}(\psi)|\leq C\|\psi\|_{1,\infty}(1+\mathbb{E}\|X^N_0\|^2)e^{-\frac{1}{2}(\lambda_1-K)t_k},
\end{align}
and $\pi^N_{\tau}(\|\cdot\|^p)<\infty.$
\end{prop}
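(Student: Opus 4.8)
The plan is to regard \eqref{fulldiscrete} as a discrete-time Markov chain and to derive all of the stated properties from one pathwise contraction estimate together with time-uniform moment bounds. I would begin with the contraction (the third displayed estimate), since it drives both the ergodicity and the uniqueness of $\pi^N_{\tau}$. Let $X^N_k$ and $\bar X^N_k$ solve \eqref{fulldiscrete} with the same noise, and set $E_k:=X^N_k-\bar X^N_k$. Subtracting the recursions cancels the stochastic term and gives $E_{k+1}=S^N(\tau)\big(E_k+\tau[F^N(X^N_k)-F^N(\bar X^N_k)]\big)$. Because $E_k\in H_N$ and $P^N$ is an orthogonal projection, I move $P^N$ onto $E_k$ and use the dissipativity and derivative bound of Assumption \ref{assumpF_1} to obtain $\|E_k+\tau[F^N(X^N_k)-F^N(\bar X^N_k)]\|^2\le(1+2K\tau+L_F^2\tau^2)\|E_k\|^2$. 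Combining this with the sharp eigenvalue bound $\|S^N(\tau)\|_{\mathcal{L}(H)}\le e^{-\lambda_1\tau}$ and choosing $\tau\le\frac{\lambda_1-K}{4L_F^2}$ to absorb the quadratic term into the exponential (the linear terms already leave slack since $K<\lambda_1$) yields the pathwise one-step contraction $\|E_{k+1}\|^2\le e^{-(\lambda_1-K)\tau}\|E_k\|^2$. Iterating pathwise, raising to the power $p$, and taking expectations produces the claimed rate $e^{-(\lambda_1-K)pt_k}$.

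For the two a priori moment bounds I would work with the mild form \eqref{contiversion} and estimate its three terms in $\dot H^{\beta}$. The semigroup term is bounded by $e^{-\lambda_1 t}\|X^N_0\|_{\beta}$; the drift term is controlled by \eqref{semigroup1} with $\nu=\beta/2<1$ together with the linear growth $\|F(x)\|\le C(1+\|x\|)$ from Assumption \ref{assumpF_1}, so its $\dot H^{\beta}$-norm reduces to a singular but integrable convolution of the running moment. The stochastic convolution is the delicate term: factoring $Q^{\frac12}=(-A)^{\frac{1-\beta}{2}}\cdot(-A)^{\frac{\beta-1}{2}}Q^{\frac12}$ and using Assumption \ref{assump_Q}(i), the It\^o isometry (and the Burkholder inequality for $p>1$) reduces its $\dot H^{\beta}$ moment to $\|(-A)^{\frac{\beta-1}{2}}Q^{\frac12}\|^2_{\mathcal{L}_2(H)}\int_0^t\|(-A)^{\frac12}S^N(t-\lfloor s\rfloor_{\tau})\|^2_{\mathcal{L}(H)}\,\mathrm{d}s$. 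A naive treatment of the frozen kernel $t-\lfloor s\rfloor_{\tau}$ gives a factor that diverges logarithmically as $\tau\to0$; the observation that removes this is that $r\mapsto\|(-A)^{\frac{\rho}{2}}S^N(r)\|_{\mathcal{L}(H)}=\max_{1\le i\le N}\lambda_i^{\rho/2}e^{-\lambda_i r}$ is non-increasing, so that $t-\lfloor s\rfloor_{\tau}\ge t-s$ lets me bound the discretized integrand by the continuous one and apply \eqref{semigroup3} with $\rho=1$, uniformly in $t,\tau,N$. Assembling the three terms gives a self-referential inequality for $\sup_{s\le t}\mathbb{E}\|X^{N,\tau}_s\|^{2p}_{\beta}$, and a singular Gronwall argument exploiting the exponential decay yields the time-uniform bound with the stated dependence. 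The H\"older estimate follows similarly: writing $X^{N,\tau}_t-X^{N,\tau}_s=(S^N(t-s)-I)X^{N,\tau}_s+\cdots$, I bound the first term by \eqref{semigroup2} with $\rho=\beta/2$ and the just-proved $\dot H^{\beta}$ bound, and the stochastic increment by the same monotonicity-plus-\eqref{semigroup3} estimate, whose contribution is of the dominant order $(t-s)^{\beta}$.

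With the contraction and the time-uniform moments in hand, existence and uniqueness of $\pi^N_{\tau}$ follow from a Krylov--Bogoliubov/fixed-point argument: the moment bound gives tightness of the time averages of the laws of $X^N_k$, hence existence, while the $p=1$ contraction shows the one-step transition is a strict contraction on probability measures in the $L^2$-Wasserstein sense, hence uniqueness. For the ergodic estimate \eqref{full1} I would couple a solution started from $X^N_0$ with one started from a $\pi^N_{\tau}$-distributed datum, bound $|\mathbb{E}\psi(X^N_k(X^N_0))-\pi^N_{\tau}(\psi)|\le\|\psi\|_{1,\infty}\int\mathbb{E}\|X^N_k(X^N_0)-X^N_k(y)\|\,\pi^N_{\tau}(\mathrm{d}y)$, and insert the $p=1$ contraction together with $\pi^N_{\tau}(\|\cdot\|^2)<\infty$. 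Finally, $\pi^N_{\tau}(\|\cdot\|^p)<\infty$ is inherited from the time-uniform moment bound by passing to the stationary law (Fatou along a converging subsequence of time averages).

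I expect the principal obstacle to be the time-uniform $\dot H^{\beta}$ a priori estimate with a constant independent of $\tau$ and $N$, precisely because the exponential-integrator discretization freezes the semigroup at $t-\lfloor s\rfloor_{\tau}$ inside the stochastic convolution. The monotonicity of $\|(-A)^{\frac{\rho}{2}}S^N(\cdot)\|_{\mathcal{L}(H)}$, which permits replacing $\lfloor s\rfloor_{\tau}$ by $s$ and invoking \eqref{semigroup3}, together with the dissipativity $K<\lambda_1$ and the step restriction $\tau\le\frac{\lambda_1-K}{4L_F^2}$, are exactly what make the bound uniform; everything else is a standard, if lengthy, assembly.
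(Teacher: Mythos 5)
Your treatment of the contraction estimate, the H\"older continuity, the ergodicity bound \eqref{full1}, and the existence and uniqueness of $\pi^N_{\tau}$ (Krylov--Bogoliubov plus a Wasserstein contraction, where the paper simply cites earlier work) is sound and essentially parallel to the paper's. The genuine gap is in the time-uniform moment bound. You propose to close a self-referential inequality for $\sup_{s\le t}\mathbb{E}\|X^{N,\tau}_s\|^{2p}_{\beta}$ obtained from the mild form \eqref{contiversion}, using the linear growth $\|F(x)\|\le C(1+\|x\|)$ and a singular Gronwall lemma. That argument only sees the two-sided constant $L_F$: the convolution kernel it produces is of the form $CL_F(t-s)^{-\beta/2}e^{-\lambda_1(t-s)/2}$, whose total mass $CL_F\Gamma(1-\tfrac{\beta}{2})(2/\lambda_1)^{1-\beta/2}$ is not assumed to be smaller than one. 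A Gronwall inequality with a kernel of mass at least one permits exponential growth in $t$, so no bound uniform over $t\ge 0$ follows. The hypothesis that actually delivers uniformity is the one-sided condition $\langle x-y,F(x)-F(y)\rangle\le K\|x-y\|^2$ with $K<\lambda_1$, and this cannot be exploited through norm estimates on the mild form --- it must enter through an inner product. Invoking ``the dissipativity $K<\lambda_1$'' at the end of your second paragraph does not repair this, because nothing in your chain of estimates ever uses it.

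The paper's route around this is the decomposition $X^N_k=Z^N_k+V^N_k$ with $V^N_k:=\int_0^{t_k}S^N(t_k-\lfloor s\rfloor_{\tau})P^N\mathrm{d}W(s)$: the moments of $V^N_k$ are bounded uniformly by the Burkholder--Davis--Gundy inequality and \eqref{semigroup3} with no input from $F$, while $Z^N_k$ obeys the pathwise recursion $Z^N_{k+1}=S^N(\tau)\big(Z^N_k+\tau F^N(Z^N_k+V^N_k)\big)$, to which $\langle Z^N_k,F^N(Z^N_k+V^N_k)-F^N(V^N_k)\rangle\le K\|Z^N_k\|^2$ applies and yields a contraction factor $e^{-(\lambda_1-K)\tau}$ after absorbing $L_F^2\tau^2$ via $\tau\le\frac{\lambda_1-K}{4L_F^2}$; an induction on $p$ with Young's inequality handles the cross terms for higher moments. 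Once $\sup_{k}\mathbb{E}\|X^N_k\|^{2p}$ is known, the $\dot H^{\beta}$ bound is a direct, Gronwall-free estimate of the mild form, since the drift term there only requires $\|F^N(X^{N,\tau}_{\lfloor s\rfloor_{\tau}})\|_{L^{2p}(\Omega;H)}$, which is already controlled. Your observation on the monotonicity of $r\mapsto\|(-A)^{\rho/2}S^N(r)\|_{\mathcal{L}(H)}$, used to replace $\lfloor s\rfloor_{\tau}$ by $s$ before applying \eqref{semigroup3}, is correct and useful, but it is not the crux of the uniformity.
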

\begin{remark}\label{rem1}
In Propositions \ref{propexact} \ref{propsemi} and \ref{propfull}, the conditions $\psi\in\mathcal{C}_b$ in \eqref{exactmeas} and \eqref{semipi} and $\psi\in \mathcal{C}^1_b$ in \eqref{full1} can both be changed to  $\psi\in \mathcal{C}^1_q:=\{\psi:H\rightarrow \mathbb{R}\big|\|D\psi(x)\|\leq C(1+\|x\|^q),x\in H\}$ with $q\ge 1.$ In this setting, similar arguments lead to 
$$|\mathbb{E}\psi(X(t,X_0))-\pi (\psi)|\leq C(1+\mathbb{E}\|X_0\|^{2q+2})^{\frac{1}{2}}e^{-\frac{1}{2}(\lambda_1-K)t},$$
$$|\mathbb{E}\psi(X^N(t,X^N_0))-\pi ^N(\psi)|\leq C(1+\mathbb{E}\|X^N_0\|^{2q+2})^{\frac{1}{2}}e^{-\frac{1}{2}(\lambda_1-K)t}$$ for $t\ge 0$
and
$$|\mathbb{E}\psi(X^N_k)-\pi ^N_{\tau}(\psi)|\leq C(1+\mathbb{E}\|X^N_0\|^{2q+2})^{\frac{1}{2}}e^{-\frac{1}{2}(\lambda_1-K)t_k}$$ for $k\in \mathbb{N}$ 
since $\sup_{t\ge 0}\mathbb{E}\|X(t,X_0)\|^{2q}<C(1+\mathbb{E}\|X_0\|^{2q}),\;
\sup_{t\ge 0}\mathbb{E}\|X^N(t,X^N_0)\|^		{2q}<C(1+\mathbb{E}\|X^N_0\|^{2q})$ and $\sup_{k\in\mathbb{N}}\mathbb{E}\|X^N_k\|^{2q}<C(1+\mathbb{E}\|X^N_0\|^{2q}).$ 
\end{remark}

Similarly to \cite[Theorem $3.4$]{wxj}, the following time-independent convergence order of the semi-discretization can be obtained based on the exponential decay in Proposition \ref{propsemi}. The proof is omitted.

\begin{prop}\label{propcon1}
Let Assumptions \ref{assumpF_1}, \ref{assump_Q} and \ref{assumpX_0} hold. Then
\begin{align*}
\sup_{t\ge 0}\mathbb{E}\|X(t)-X^N(t)\|^2\leq C(\|(-A)^{\frac{\beta-1}{2}}Q^{\frac{1}{2}}\|_{\mathcal{L}_2(H)},K)(1+\mathbb{E}\|X_0\|^2_{\beta})N^{-2\beta}.
\end{align*}
\end{prop}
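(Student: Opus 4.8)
The plan is to decompose the error orthogonally and treat the two pieces by different mechanisms: the spatial truncation of the exact solution is handled by the smoothing of $I-P^N$, while the genuinely scheme-induced error is controlled by a pathwise energy estimate that turns the dissipativity $K<\lambda_1$ into time-uniform exponential decay, which is exactly the mechanism underlying the exponential contraction recorded in Proposition~\ref{propsemi}. Concretely, I would set $v(t):=(I-P^N)X(t)$ and $u(t):=P^NX(t)-X^N(t)$, so that $X(t)-X^N(t)=u(t)+v(t)$ is orthogonal in $H$ and $\mathbb{E}\|X(t)-X^N(t)\|^2=\mathbb{E}\|u(t)\|^2+\mathbb{E}\|v(t)\|^2$. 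Since $I-P^N$ projects onto $\operatorname{span}\{e_i\}_{i>N}$, one has $\|(I-P^N)x\|\le\lambda_{N+1}^{-\beta/2}\|x\|_\beta\le CN^{-\beta}\|x\|_\beta$, whence $\sup_{t\ge0}\mathbb{E}\|v(t)\|^2\le CN^{-2\beta}\sup_{t\ge0}\mathbb{E}\|X(t)\|_\beta^2$. The remaining input for this piece is the time-uniform regularity bound $\sup_{t\ge0}\mathbb{E}\|X(t)\|_\beta^2\le C(\|(-A)^{\frac{\beta-1}{2}}Q^{\frac12}\|_{\mathcal{L}_2(H)},K)(1+\mathbb{E}\|X_0\|_\beta^2)$, which follows from the correlation condition and the dissipativity exactly as the analogous estimate for the full discretization in Proposition~\ref{propfull}.

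For $u$, the key observation is that applying $P^N$ to \eqref{spde} and subtracting \eqref{semidiscrete} makes the stochastic terms $P^N\,\mathrm{d}W$ cancel, so that $u$ solves the pathwise equation $\frac{\mathrm d}{\mathrm dt}u(t)=A^Nu(t)+P^N[F(X(t))-F(X^N(t))]$ with $u(0)=P^NX_0-X^N_0=0$. This removes any need for It\^o corrections and permits a direct energy estimate. Differentiating and using $P^Nu=u$ gives $\tfrac12\tfrac{\mathrm d}{\mathrm dt}\|u\|^2=\langle A^Nu,u\rangle+\langle F(X)-F(X^N),u\rangle\le-\lambda_1\|u\|^2+\langle F(X)-F(X^N),u\rangle$, where $\langle A^Nu,u\rangle\le-\lambda_1\|u\|^2$ holds because $u\in H_N$.

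Writing $u=e-v$ with $e:=X-X^N$, the dissipation condition in Assumption~\ref{assumpF_1} and $\|F(X)-F(X^N)\|\le L_F\|e\|$ yield $\langle F(X)-F(X^N),u\rangle\le K\|e\|^2+L_F\|e\|\,\|v\|$; inserting $\|e\|^2=\|u\|^2+\|v\|^2$ and absorbing the cross term by Young's inequality (choosing the free parameter so that the coefficient of $\|u\|^2$ becomes $-(\lambda_1-K)/2$) leads to $\tfrac{\mathrm d}{\mathrm dt}\|u\|^2\le-(\lambda_1-K)\|u\|^2+C\|v\|^2$. A variation-of-constants argument with $u(0)=0$ then gives $\|u(t)\|^2\le C\int_0^t e^{-(\lambda_1-K)(t-s)}\|v(s)\|^2\,\mathrm ds$, so that $\sup_{t\ge0}\mathbb{E}\|u(t)\|^2\le\frac{C}{\lambda_1-K}\sup_{s\ge0}\mathbb{E}\|v(s)\|^2\le CN^{-2\beta}(1+\mathbb{E}\|X_0\|_\beta^2)$. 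Adding the two pieces yields the claimed bound with the stated dependence of the constant on $\|(-A)^{\frac{\beta-1}{2}}Q^{\frac12}\|_{\mathcal{L}_2(H)}$ and $K$.

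The step I expect to be the crux is guaranteeing that every constant is independent of $t$: the whole argument hinges on the strict dissipativity $\lambda_1-K>0$, which is precisely what produces the contraction rate in \eqref{exp1} and converts the Gronwall factor into the bounded integral $\int_0^\infty e^{-(\lambda_1-K)r}\,\mathrm dr$ rather than an $e^{CT}$ blow-up, so a naive Gronwall on the full mild solution would fail. The only genuinely external ingredient is the time-uniform $\dot H^\beta$ regularity of the exact solution, which is where the correlation norm enters; this can be verified directly by projecting the mild solution onto the high modes and computing the stochastic-convolution contribution in the eigenbasis, where (writing $Qe_i=q_ie_i$) the estimate $\sum_{i>N}\frac{q_i}{2\lambda_i}\le C\lambda_{N+1}^{-\beta}\|(-A)^{\frac{\beta-1}{2}}Q^{\frac12}\|_{\mathcal{L}_2(H)}^2$ supplies the $N^{-2\beta}$ factor uniformly in $t$.
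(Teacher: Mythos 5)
Your argument is correct, and it supplies a complete proof where the paper gives none: Proposition \ref{propcon1} is stated with the proof omitted, with a pointer to the literature and the remark that it is ``obtained based on the exponential decay in Proposition \ref{propsemi}.'' That phrasing suggests the intended route is the standard one via \eqref{exp1}: bound the error locally on unit time intervals and then propagate it with the exponential contractivity of the semi-discrete flow, summing a geometric series to get a $t$-independent constant. Your route is genuinely different and arguably more self-contained: the orthogonal splitting $X-X^N=u+v$ with $v=(\mathrm{Id}-P^N)X$, the cancellation of the noise in the equation for $u=P^NX-X^N$ (so that $u$ solves a pathwise ODE in the finite-dimensional space $H_N$ and the energy estimate needs no It\^o correction), and the absorption of $-\lambda_1\|u\|^2+K\|e\|^2$ into $-\tfrac{\lambda_1-K}{2}\|u\|^2$ via $\|e\|^2=\|u\|^2+\|v\|^2$ and Young's inequality all check out, and the variation-of-constants step converts the strict dissipativity $\lambda_1-K>0$ into the required uniform-in-time bound. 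What your approach buys is that the only probabilistic input is the time-uniform bound $\sup_{t\ge0}\mathbb{E}\|X(t)\|_\beta^2\le C(1+\mathbb{E}\|X_0\|_\beta^2)$; be aware that this is \emph{not} among the estimates recorded in Proposition \ref{propexact}, so you should either prove it (it follows from the mild form, \eqref{semigroup1}, the eigenbasis computation $\sum_i\lambda_i^{\beta-1}q_i<\infty$ you sketch, and Assumption \ref{assumpX_0}, exactly as the $\dot H^\beta$ bound in Proposition \ref{propfull}) or cite it explicitly; with that ingredient in place the proof is complete.
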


Now we show the convergence order of the full discretization, whose proof is postponed to the appendix.
\begin{prop}\label{propcon}
Let Assumptions \ref{assumpF_1}, \ref{assump_Q} and \ref{assumpX_0} hold. Then \begin{align*}
\sup_{k\in\mathbb{N}}\mathbb{E}\|X^N_{k}-X^N(t_k)\|^2\leq C(\|(-A)^{\frac{\beta-1}{2}}Q^{\frac{1}{2}}\|_{\mathcal{L}_2(H)},K)(1+\mathbb{E}\|X_0\|_{\beta}^2)\tau^{\beta}.
\end{align*}
\end{prop}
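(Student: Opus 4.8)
The plan is to work with the continuous version \eqref{contiversion} and estimate the error at grid points. Set $e(t):=X^{N,\tau}_t-X^N(t)$, so that $X^N_k-X^N(t_k)=e(t_k)$. Subtracting \eqref{semidiscrete} from \eqref{differential}, the process $e$ solves
\[
\mathrm{d}e(t)=A^Ne(t)\,\mathrm{d}t+\big[S^N(t-\lfloor t\rfloor_\tau)F^N(X^{N,\tau}_{\lfloor t\rfloor_\tau})-F^N(X^N(t))\big]\mathrm{d}t+\big[S^N(t-\lfloor t\rfloor_\tau)-I\big]P^N\,\mathrm{d}W(t).
\]
The first step is to isolate the noise by introducing the stochastic-convolution error
\[
Z(t):=\int_0^t\big[S^N(t-\lfloor s\rfloor_\tau)-S^N(t-s)\big]P^N\,\mathrm{d}W(s),
\]
which solves $\mathrm{d}Z=A^NZ\,\mathrm{d}t+[S^N(t-\lfloor t\rfloor_\tau)-I]P^N\,\mathrm{d}W(t)$. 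Then $\widetilde e:=e-Z$ carries no stochastic integral and, pathwise in the finite-dimensional space $H_N$, solves the random ODE $\tfrac{\mathrm{d}}{\mathrm{d}t}\widetilde e(t)=A^N\widetilde e(t)+G(t)$ with $G(t):=S^N(t-\lfloor t\rfloor_\tau)F^N(X^{N,\tau}_{\lfloor t\rfloor_\tau})-F^N(X^N(t))$. The bound on $e$ is then assembled from a global estimate of $Z$ and a pathwise energy estimate of $\widetilde e$.

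For $Z$ I would compute the mean square directly in the eigenbasis. Since $Q$ commutes with $A$, writing $q_i:=\langle Qe_i,e_i\rangle$ and using the It\^o isometry,
\begin{align*}
\mathbb E\|Z(t)\|^2&=\int_0^t\sum_{i=1}^N\big(e^{-\lambda_i(t-\lfloor s\rfloor_\tau)}-e^{-\lambda_i(t-s)}\big)^2q_i\,\mathrm{d}s\\
&=\int_0^t\sum_{i=1}^Ne^{-2\lambda_i(t-s)}\big(1-e^{-\lambda_i(s-\lfloor s\rfloor_\tau)}\big)^2q_i\,\mathrm{d}s.
\end{align*}
Using the elementary bound $1-e^{-x}\le x^{\beta/2}$ for $x\ge0$ (valid since $\tfrac{\beta}{2}\in(0,\tfrac12]$) together with $s-\lfloor s\rfloor_\tau\le\tau$, the inner factor is at most $(\lambda_i\tau)^{\beta}$, and after $\int_0^te^{-2\lambda_i(t-s)}\mathrm{d}s\le(2\lambda_i)^{-1}$ one is left with
\[
\mathbb E\|Z(t)\|^2\le\frac{\tau^{\beta}}{2}\sum_{i=1}^N\lambda_i^{\beta-1}q_i\le\frac{\tau^\beta}{2}\big\|(-A)^{\frac{\beta-1}{2}}Q^{\frac12}\big\|_{\mathcal L_2(H)}^2,
\]
which is finite and uniform in $t$ and $N$ by the correlation condition in Assumption \ref{assump_Q}. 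This is the technical heart of the proof: the rate $\tau^\beta$ and the correlation constant arise precisely from borrowing the summable weight $\lambda_i^{\beta-1}$ through $1-e^{-x}\le x^{\beta/2}$, and the restriction $\beta\le1$ enters through $\tfrac{\beta}{2}\le\tfrac12$.

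For $\widetilde e$ I would run a pathwise energy estimate, exploiting the dissipativity $K<\lambda_1$ to get a genuine exponential decay rate (this is what yields a time-independent bound, which a naive Lipschitz--Gr\"onwall argument cannot). Splitting
\[
G=\underbrace{F^N(X^{N,\tau}_t)-F^N(X^N(t))}_{G_0}+\underbrace{[S^N(t-\lfloor t\rfloor_\tau)-I]F^N(X^{N,\tau}_{\lfloor t\rfloor_\tau})}_{G_1}+\underbrace{F^N(X^{N,\tau}_{\lfloor t\rfloor_\tau})-F^N(X^{N,\tau}_t)}_{G_2},
\]
using $\langle A^N\widetilde e,\widetilde e\rangle\le-\lambda_1\|\widetilde e\|^2$ and the one-sided bound $\langle e,G_0\rangle\le K\|e\|^2$ from Assumption \ref{assumpF_1} (legitimate since $X^{N,\tau}_t,X^N(t)\in H_N$), and writing $\langle\widetilde e,G_0\rangle=\langle e,G_0\rangle-\langle Z,G_0\rangle$ with $\|e\|^2\le(1+\varepsilon)\|\widetilde e\|^2+C_\varepsilon\|Z\|^2$, the remaining inner products are absorbed by Young's inequality. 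Choosing $\varepsilon$ so small that $\mu:=\lambda_1-(1+\varepsilon)K>0$ yields
\[
\tfrac{\mathrm{d}}{\mathrm{d}t}\mathbb E\|\widetilde e\|^2\le-\mu\,\mathbb E\|\widetilde e\|^2+C\big(\mathbb E\|Z\|^2+\mathbb E\|G_1\|^2+\mathbb E\|G_2\|^2\big).
\]
Here $\mathbb E\|G_2\|^2\le L_F^2\,\mathbb E\|X^{N,\tau}_{\lfloor t\rfloor_\tau}-X^{N,\tau}_t\|^2\le C\tau^\beta$ by the H\"older estimate in Proposition \ref{propfull}, and $\mathbb E\|G_1\|^2\le C\tau^\beta$ follows from $\|[S^N(\delta)-I](-A)^{-\beta/2}\|_{\mathcal L(H)}\le C\delta^{\beta/2}$ (a consequence of \eqref{semigroup2}) together with the uniform bound $\sup_t\mathbb E\|X^{N,\tau}_t\|_\beta^2<\infty$ from Proposition \ref{propfull} and the $\dot H^\beta$-mapping property of the Nemytskii operator $F$. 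Since $\widetilde e(0)=0$ and $\mu>0$, Gr\"onwall's lemma gives $\sup_t\mathbb E\|\widetilde e(t)\|^2\le C\mu^{-1}\tau^\beta$ uniformly in $t$ and $N$; combining with $e=\widetilde e+Z$ and the bound on $Z$ yields $\sup_k\mathbb E\|X^N_k-X^N(t_k)\|^2=\sup_k\mathbb E\|e(t_k)\|^2\le C\tau^\beta$.

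I expect two points to require the most care. The first is the global spectral estimate for $Z$, where the rate $\tau^\beta$ must be extracted with exactly the weight $\lambda_i^{\beta-1}$ so as to land on the correlation constant; a step-by-step summation of one-step local errors fails here because the noise local error is only $O(\tau^\beta)$ in mean square (dominated by high modes), so the gain genuinely comes from the global regularization in the time integral. The second is arranging the dissipativity after the subtraction $\widetilde e=e-Z$: one must verify that the Young-inequality absorptions keep $\mu=\lambda_1-(1+\varepsilon)K>0$, which is possible only because $K<\lambda_1$. Finally, the term $G_1$ relies on the regularity $F\colon\dot H^\beta\to\dot H^\beta$, a standard but boundary-sensitive property of Nemytskii operators for $\beta\le1$; I would establish it (or the weaker estimate actually needed) as a preliminary lemma.
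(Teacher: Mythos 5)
Your overall architecture coincides with the paper's: your $Z$ is exactly the difference $X^{N,\tau}_t-Y^N_t$ between the full discretization and the paper's auxiliary process $Y^N_t$ (same drift, unmodified noise), your spectral computation of $\mathbb{E}\|Z(t)\|^2\le C\tau^{\beta}\|(-A)^{\frac{\beta-1}{2}}Q^{\frac12}\|^2_{\mathcal L_2(H)}$ is a correct, equivalent version of the paper's operator-norm estimate, and your $\widetilde e=Y^N-X^N$ is handled by the same dissipative energy/Gr\"onwall argument (your rearrangement of the one-sided Lipschitz condition through $\|e\|^2\le(1+\varepsilon)\|\widetilde e\|^2+C_\varepsilon\|Z\|^2$ versus the paper's direct use of the pair $(Y^N_t,X^N(t))$ is cosmetic, though you should treat the sign of $K$ when multiplying that inequality by $K$).

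The genuine gap is in your treatment of $G_1=[S^N(t-\lfloor t\rfloor_\tau)-\mathrm{Id}]F^N(X^{N,\tau}_{\lfloor t\rfloor_\tau})$. The bound $\mathbb E\|G_1\|^2\le C\tau^{\beta}$ requires $\|(-A)^{\frac{\beta}{2}}F^N(x)\|\le C(1+\|x\|_{\beta})$, and this is not available under the paper's hypotheses: Assumption \ref{assumpF_1} only controls $DF$ in $\mathcal L(H)$, and Assumption \ref{assumpF_3} controls higher derivatives composed with \emph{negative} powers of $-A$. For a Nemytskii operator with $f(0)\neq 0$ and $\beta>\tfrac12$ the property actually fails, since $\dot H^{\beta}=\mathrm{Dom}((-A)^{\beta/2})$ encodes the Dirichlet boundary condition and $F(x)$ does not satisfy it; quantitatively, $\|(-A)^{\frac{\beta}{2}}P^NF(x)\|^2$ can grow like $N^{2\beta-1}$. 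So your route is fine for $\beta<\tfrac12$ (space--time white noise) but breaks for the trace-class regime $\beta\in[\tfrac12,1]$, and the "preliminary lemma" you defer cannot be proved in the needed generality. The paper's way around this is not to estimate $\|G_1\|$ in isolation: it pairs $G_1$ with the Duhamel representation of $Y^N_s-X^N(s)$ and moves $(-A^N)^{\frac{\beta}{2}}$ onto the analytic semigroup $S^N(s-r)$ inside the convolution, using $\|(-A)^{\frac{\beta}{2}}S^N(s-r)\|_{\mathcal L(H)}\le C(s-r)^{-\beta/2}e^{-\lambda_1(s-r)/2}$, so that only the linear growth $\|F^N(x)\|\le C(1+\|x\|)$ is ever used for $\beta<1$; a positive-order bound on $F$ is needed only in the cross term at the endpoint $\beta=1$, where the chain rule and $\|DF\|\le L_F$ suffice. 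You would need to replace your direct bound on $G_1$ by this (or an equivalent) smoothing argument to close the proof for all $\beta\in(0,1]$.
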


\begin{remark}\label{remark2}
By Propositions \ref{propexact}, \ref{propsemi}, \ref{propfull}, \ref{propcon1} and \ref{propcon}, we have $$|\pi^N(\psi)-\pi(\psi)|\leq CN^{-\beta},\quad |\pi^N_{\tau}(\psi)-\pi^N(\psi)|\leq C\tau^{\frac{\beta}{2}},\quad\psi\in\mathcal{C}^1_q.$$ In fact, %by denoting $X(t,x),X^N(t,y)$ the solutions at time $t$ with initial data $x,y,$ respectively, we get
\begin{align*}
&~|\pi^N(\psi)-\pi(\psi)|=|\mathbb{E}\psi(X^N(t,T^N_0))-\mathbb{E}\psi(X(t,\tilde{T}_0))|\\
\leq&~ |\mathbb{E}\psi(X^N(t,T^N_0))-\mathbb{E}\psi(X^N(t,X^N_0))|
+|\mathbb{E}\psi(X^N(t,X^N_0))
-\mathbb{E}\psi(X(t,X^N_0))|\\
&+|\mathbb{E}\psi(X(t,X^N_0))-\mathbb{E}\psi(X(t,\tilde{T}_0))|\\
\leq &~C\mathbb{E}\big[(1+\|X^N(t,T^N_0)\|^q+\|X^N(t,X^N_0)\|^q)\|X^N(t,T^N_0)-X^N(t,X^N_0)\|\big]\\
&+C\mathbb{E}\big[(1+\|X^N(t,X^N_0)\|^q+\|X(t,X^N_0)\|^q)\|X^N(t,X^N_0)-X(t,X^N_0)\|\big]\\
&+C\mathbb{E}\big[(1+\|X(t,X^N_0)\|^q+\|X(t,\tilde{T}_0)\|^q)\|X(t,X^N_0)-X(t,\tilde{T}_0)\|\big]\\
\leq&~ Ce^{-Ct}+CN^{-\beta},
\end{align*}
where the law of $T^N_0$ and $\tilde{T}_0$ are $\pi^N$ and $\pi$ respectively. Letting $t\rightarrow \infty$ leads to the desired result. The estimate of $|\pi^N_{\tau}(\psi)-\pi^N(\psi)|$ can be proved similarly. We remark that it is shown in \cite{czh20} that by the approach of weak error analysis, for $\psi\in\mathcal{C}^2_b,$ the convergence order of $|\pi^N(\psi)-\pi(\psi)|$ and $|\pi^N_{\tau}(\psi)-\pi^N(\psi)|$ can be improved to $2\beta-$ and $\beta-$, respectively.
\end{remark}

\subsection{Poisson-type equations}\label{poissontype}
In this subsection, we introduce three Poisson-type equations, including the Poisson's equation and the modified Poisson's equation associated to \eqref{spde}, and the Poisson's equation associated to the semi-discretization \eqref{semidiscrete}. 

To proceed, we introduce the generators of \eqref{spde} and the semi-discretization \eqref{semidiscrete}.
 The generator $\mathcal{L}$ of \eqref{spde} is defined as
\begin{align}\label{generator}
\mathcal{L}\varphi(x)=\langle Ax+F(x),D\varphi(x)\rangle+\frac{1}{2}\mathrm{Tr}(QD^2\varphi(x)),\quad \varphi\in\mathcal{C}^2_b,
\end{align}
and the generator $\mathcal{L}^{N}$ of \eqref{semidiscrete} is defined as
\begin{align}
&\mathcal{L}^N\varphi(x)=\langle A^Nx+F^N(x),D\varphi(x)\rangle+\frac{1}{2}\mathrm{Tr}(P^NQD^2\varphi(x)),\;\varphi\in\mathcal{C}^2_b(H_N;\mathbb{R}).\label{L^N}
\end{align}
For a bounded Borel measurable function $h:H\rightarrow \mathbb{R}$, we can define three Poisson-type equations as follows, which are frequently used in the sequel.
\begin{itemize}
\item[(\romannumeral1)] Poisson's equation associated to \eqref{spde}:
\begin{align}\label{poisson2}
\mathcal{L}\phi(x)=h(x)-\pi(h),\quad x\in H.
\end{align} 
Let
\begin{align}\label{phiexp}
\phi(x):=-\int_0^{\infty}[\mathbb{E}h(X(t,x))-\pi(h)]\mathrm{d}t.
\end{align}
Similarly to \cite[Lemma $4.3$]{B12}, it can be verified that $\phi$ is a solution of the associated Poisson's equation.

\item[(\romannumeral2)] Modified Poisson's equation associated to \eqref{spde}:
\begin{align}\label{poisson}
 \mathcal{L}\phi_{\tau,\epsilon}(x)=\tau^{\frac{\beta}{2}-\epsilon}\phi_{\tau,\epsilon}(x)+h(x)-\pi(h), \quad x\in H,
 \end{align}
 where $0<\epsilon<\frac{\beta}{2}$ is a parameter to be determined. 
 We deduce from \cite[Chapter 5]{pde01} that the unique solution $\phi_{\tau,\epsilon}$ can be represented as 
\begin{align}\label{phiinte}
\phi_{\tau,\epsilon}(x)=-\int_{0}^{\infty}\exp\{-\tau^{\frac{\beta}{2}-\epsilon} t\}\big[\mathbb{E}h(X(t,x))-\pi(h)\big]\mathrm{d}t.
\end{align}

\item[(\romannumeral3)] Poisson's equation associated to \eqref{semidiscrete}: 
\begin{align}\label{GammaNtau}
\mathcal{L}^N\Gamma^N_{\tau,\epsilon}(x)=\phi_{\tau,\epsilon}(x)-\pi^N(\phi_{\tau,\epsilon}),\;x\in H_N,
\end{align}
 where $\phi_{\tau,\epsilon}$ is the solution of \eqref{poisson}.
 It can be verified that $$\Gamma^N_{\tau,\epsilon}(x):=-\int_0^{\infty}\big[\mathbb{E}\phi_{\tau,\epsilon}(X^N(t,x))-\pi^N(\phi_{\tau,\epsilon})\big]\mathrm{d}t$$ is a solution of \eqref{GammaNtau}.
\end{itemize}

We need the following assumption on the coefficient $F$, which plays a key role in gaining some regularity estimates of Poisson-type equations. 
\begin{assumption}\label{assumpF_3}
Assume that there exist $0\leq \sigma<1,\;C(\sigma)>0$ such that for $x,l,k,y,z\in H,$ 
\begin{align}
&\|(-A)^{-\sigma}D^2F(x)(l,k)\|\leq C(\sigma)\|l\|\|k\|,\label{D2F}\\
&\|(-A)^{-\sigma}D^3F(x)(l,k,y)\|\leq C(\sigma)\|l\|\|k\|\|y\|,\label{D3F}\\
&\|(-A)^{-\sigma}D^4F(x)(l,k,y,z)\|\leq C(\sigma)\|l\|\|k\|\|y\|\|z\|.\label{D4F}
\end{align}
\end{assumption}

Throughout this article, we assume that $h\in\mathcal{C}_b^4.$ Below, we show some properties, including regularity estimates and asymptotic property of Poisson-type equations. For the proofs, see Section \ref{sec3}.
\begin{lemma}\label{lemmaphi}
For any $\delta_i\in[0,1)$ with $\sum_{i=1}^4\delta_i<1$, there exists $C:=C(\delta_1,\delta_2,\delta_3,\delta_4)>0$ independent of $\tau,\epsilon$ such that the following estimates hold.\\
$(\mathrm{\romannumeral1})$ $($space-dependent regularity estimates of $\phi_{\tau,\epsilon}$$)$ Let Assumptions \ref{assumpF_1}, \ref{assump_Q} and \ref{assumpF_3} hold. Then for $x\in H,$
%for $\delta,\delta_i\in[0,1),i=1,\ldots,4,$ we have
\begin{align}
&|\phi_{\tau,\epsilon}(x)|\leq C\|h\|_{\infty}(1+\|x\|^2),\quad \|(-A)^{\delta_1}D\phi_{\tau,\epsilon}(x)\|\leq C\|h\|_{1,\infty}(1+\|x\|^2),\label{phi1}\\
&\|(-A)^{\delta_1}D^2\phi_{\tau,\epsilon}(x)(-A)^{\delta_2}\|_{\mathcal{L}(H)}\leq C\|h\|_{2,\infty}(1+\|x\|^2),\label{phi2}\\
&\|(-A)^{\delta_1}D^3\phi_{\tau,\epsilon}(x)((-A)^{\delta_2}\cdot,(-A)^{\delta_3}\cdot)\|_{\mathcal{L}(H\times H;H)}\leq C\|h\|_{3,\infty}(1+\|x\|^2),\label{phi3}\\
&\|(-A)^{\delta_1}D^4\phi_{\tau,\epsilon}(x)((-A)^{\delta_2}\cdot,(-A)^{\delta_3}\cdot,(-A)^{\delta_4}\cdot)\|_{\mathcal{L}(H\times H\times H;H)}\leq C\|h\|_{4,\infty}(1+\|x\|^2).\;\label{phi4}
\end{align}
$(\mathrm{\romannumeral2})$ $($space-dependent regularity estimates of $\phi$$)$ Let Assumptions \ref{assumpF_1} and \ref{assump_Q} hold. Then for $x\in H,$
\begin{align}\label{phiexact}
&|\phi(x)|\leq C\|h\|_{\infty}(1+\|x\|^2),\quad\|(-A)^{\delta_1}D\phi(x)\|\leq C\|h\|_{1,\infty}(1+\|x\|^2).
\end{align}
If in addition Assumption \ref{assumpF_3} \eqref{D2F} holds, then for $x\in H,$
\begin{align*}
\|(-A)^{\delta_1}D^2\phi(x)(-A)^{\delta_2}\|_{\mathcal{L}(H)}\leq C\|h\|_{2,\infty}(1+\|x\|^2).
 \end{align*}
 $(\mathrm{\romannumeral3})$ $($space-dependent regularity estimates of $\Gamma^N_{\tau,\epsilon}$$)$ Let Assumptions \ref{assumpF_1} and \ref{assump_Q} hold. Then for $x\in H_N,$
 \begin{align*}
|\Gamma^N_{\tau,\epsilon}(x)|\leq C(1+\|x\|^3),\quad \|(-A)^{\delta_1}D\Gamma^N_{\tau,\epsilon}(x)\|\leq C(1+\|x\|^3).
\end{align*}
If in addition Assumption \ref{assumpF_3} \eqref{D2F} holds, then for $x\in H_N,$
\begin{align*}
\|(-A)^{\delta_1}D^2\Gamma^N_{\tau,\epsilon}(x)(-A)^{\delta_2}\|_{\mathcal{L}(H)}\leq C(1+\|x\|^3).
\end{align*}
\end{lemma}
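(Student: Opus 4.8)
The plan is to exploit the explicit integral representations \eqref{phiexp}, \eqref{phiinte} and the formula defining $\Gamma^N_{\tau,\epsilon}$ in item (\romannumeral3), differentiating under the integral in the initial datum and estimating the spatial derivatives of $u(t,x):=\mathbb{E}h(X(t,x))$ and of $\mathbb{E}\phi_{\tau,\epsilon}(X^N(t,x))$. The single observation that renders every constant independent of $\tau$ and $\epsilon$ is that $\exp\{-\tau^{\frac{\beta}{2}-\epsilon}t\}\le 1$ for all $t\ge 0$; discarding this factor reduces each bound for $\phi_{\tau,\epsilon}$ to the corresponding bound for $\phi$ (formally the case $\tau^{\frac{\beta}{2}-\epsilon}=0$), so parts (\romannumeral1) and (\romannumeral2) follow from one and the same computation and only the intrinsic exponential decay of \eqref{spde} enters the constants. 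The zeroth-order bound is immediate: \eqref{exactmeas} gives $|u(t,x)-\pi(h)|\le C\|h\|_\infty(1+\|x\|^2)e^{-\frac12(\lambda_1-K)t}$, and integrating against $\exp\{-\tau^{\frac{\beta}{2}-\epsilon}t\}\le 1$ yields $|\phi_{\tau,\epsilon}(x)|\le C\|h\|_\infty(1+\|x\|^2)$, uniformly in $\tau,\epsilon$.

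For the derivative estimates in (\romannumeral1) I would differentiate the mild form of \eqref{spde} in $x$ to obtain the variation processes: the first variation $\eta^x_t=DX(t,x)$ solves the linearized equation $\mathrm{d}\eta^x_t=A\eta^x_t\,\mathrm{d}t+DF(X(t,x))\eta^x_t\,\mathrm{d}t$ with $\eta^x_0=I$, while the higher variations $D^jX(t,x)$ $(j=2,3,4)$ solve linear equations with the same generator $A+DF(X(t,x))$ but forced by sums of products of lower-order variations contracted with $D^jF(X(t,x))$. The chain rule then expresses $(-A)^{\delta_1}D^ju(t,x)$ through $Dh,\dots,D^jh$ and these variations, and the polynomial factor $(1+\|x\|^2)$ is supplied by the uniform moment bounds of Proposition \ref{propexact}. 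Three ingredients close the estimates: (a) the dissipativity $K<\lambda_1$ of Assumption \ref{assumpF_1}, which forces every variation to decay at a rate governed by $\lambda_1-K$ and guarantees convergence of the $t$-integral at infinity; (b) the analytic smoothing \eqref{semigroup1}--\eqref{semigroup3}, used in the variation-of-constants formulae to move the weights $(-A)^{\delta_i}$ onto the semigroup factors at the cost of integrable singularities; and (c) Assumption \ref{assumpF_3}, which guarantees that $(-A)^{-\sigma}$ applied to each forcing term $D^jF(X)(\cdots)$ is bounded, so that the smoothing factor $(-A)^{\sigma}S(t-s)$ can absorb it. The hypothesis $\sum_{i=1}^4\delta_i<1$ is exactly what keeps the resulting Beta-type integrals $\int_0^t(t-s)^{-a}s^{-b}\,\mathrm{d}s$ finite. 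Part (\romannumeral2) is the special case just described and needs only \eqref{D2F} for its second-order bound.

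Part (\romannumeral3) bootstraps on (\romannumeral1). By \eqref{phi1} the test function $\phi_{\tau,\epsilon}$ is not bounded but lies in $\mathcal{C}^1_q$ with $q=2$, and, crucially, its seminorm there is controlled uniformly in $\tau,\epsilon$. Inserting $\psi=\phi_{\tau,\epsilon}$ into the polynomial-weight decay estimate of Remark \ref{rem1} gives $|\mathbb{E}\phi_{\tau,\epsilon}(X^N(t,x))-\pi^N(\phi_{\tau,\epsilon})|\le C(1+\|x\|^6)^{1/2}e^{-\frac12(\lambda_1-K)t}$, whence $|\Gamma^N_{\tau,\epsilon}(x)|\le C(1+\|x\|^3)$ after integration. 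For the first- and second-order bounds I would differentiate the representation of $\Gamma^N_{\tau,\epsilon}$ in $x$, introduce the variation processes of the semi-discretization \eqref{semidiscrete} (which contract exponentially by \eqref{exp1} of Proposition \ref{propsemi}), and contract them against $D\phi_{\tau,\epsilon}$ and $D^2\phi_{\tau,\epsilon}$, invoking the weighted bounds \eqref{phi1}--\eqref{phi2} to carry the weights $(-A)^{\delta_i}$. Since all of these inputs are uniform in $\tau,\epsilon$, so is the conclusion.

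I expect the genuine obstacle to be the third- and fourth-order weighted estimates \eqref{phi3}--\eqref{phi4}. Differentiating the flow three or four times produces a large number of terms, and in each one must route the inner weights $(-A)^{\delta_2},(-A)^{\delta_3},(-A)^{\delta_4}$ through the correct variation factors and the outer weight $(-A)^{\delta_1}$ through the appropriate semigroup, all while keeping every internal time integral convergent. This bookkeeping is delicate precisely because the admissible singularity budget is only $\sum_{i}\delta_i<1$: verifying that Assumption \ref{assumpF_3} with $\sigma<1$ supplies just enough smoothing on each $D^jF$ to remain within that budget, and that differentiation under the integral is legitimate with every bound uniform in $\tau$ and $\epsilon$, is the crux of the argument.
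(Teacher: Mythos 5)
Your proposal is correct and follows essentially the same route as the paper: discard the factor $e^{-\tau^{\beta/2-\epsilon}t}\le 1$ in the representation \eqref{phiinte} to make all constants uniform in $\tau,\epsilon$, reduce everything to weighted, exponentially decaying bounds on $D^ju(t,x)$ via the variation processes of the flow, use the analytic smoothing of $S(t)$ together with Assumption \ref{assumpF_3} to absorb the weights $(-A)^{\delta_i}$ under the budget $\sum_i\delta_i<1$, and obtain part (\romannumeral3) by feeding $\phi_{\tau,\epsilon}\in\mathcal{C}^1_2$ (uniformly in $\tau,\epsilon$) into the polynomial-weight version of the exponential mixing from Remark \ref{rem1}, which is exactly where the cubic growth $(1+\|x\|^3)$ comes from. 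The one point where your mechanism differs from the paper's is the source of the large-time decay of the derivative estimates: you attribute it to exponential contraction of the variation processes under the dissipativity $K<\lambda_1$, whereas the paper factors through time $1$ via the Markov property and applies the Bismut--Elworthy--Li formula (this is where the nondegeneracy hypotheses $\|Q^{-\frac12}(-A)^{-\frac12}\|_{\mathcal{L}(H)}<\infty$ and Assumption \ref{assump_Q}(\romannumeral2) are actually used), which lets the large-$t$ constants depend only on $\|h\|_\infty$ through the mixing estimate \eqref{exactmeas}. For the bounds as stated, which involve $\|h\|_{j,\infty}$ anyway, either mechanism suffices, so this is a difference of organization rather than of substance.
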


\begin{lemma}\label{lemmaDDphi}
Let Assumptions \ref{assumpF_1}, \ref{assump_Q} and \ref{assumpF_3} hold. For any $\delta_i\in[0,1)$ with $\sum_{i=1}^3\delta_i<1$, there exists $C:=C(\delta_1,\delta_2,\delta_3)>0$ independent of $\tau,\epsilon$ such that for $x\in H,$
\begin{itemize}
\item
[(\romannumeral1)] $($space-independent regularity estimates of $\phi_{\tau,\epsilon}$$)$
\begin{align}
&\|(-A)^{\delta_1}D\phi_{\tau,\epsilon}(x)\|\leq C\tau^{-\frac{\beta}{2}+\epsilon},\label{phi5}\\
&\|(-A)^{\delta_1}D^2\phi_{\tau,\epsilon}(x)(-A)^{\delta_2}\|_{\mathcal{L}(H)}\leq C\tau^{-\frac{\beta}{2}+\epsilon},\label{phi6}\\
&\|(-A)^{\delta_1}D^3\phi_{\tau,\epsilon}(x)((-A)^{\delta_2}\cdot,(-A)^{\delta_3}\cdot)\|_{\mathcal{L}(H\times H;H)}\leq C\tau^{-\frac{\beta}{2}+\epsilon};\label{phi7}
\end{align}
\item 
[(\romannumeral2)] $($asymptotic relationship between $\phi$ and $\phi_{\tau,\epsilon}$$)$\\
$\int_H\|(-A)^{\delta_1}(D\phi_{\tau,\epsilon}(x)-D\phi(x))\|^2\pi(\mathrm{d}x)\rightarrow 0$ as $\tau\rightarrow0$ for $\epsilon\in(0,\frac{\beta}{2})$. 
\end{itemize}
\end{lemma}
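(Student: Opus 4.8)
The plan is to treat the two assertions separately, both starting from the integral representation \eqref{phiinte}. Writing $\mu:=\tau^{\frac{\beta}{2}-\epsilon}$ (so $\mu\in(0,1)$, since $\tau\in(0,\tfrac12)$ and $0<\tfrac{\beta}{2}-\epsilon<1$), differentiation under the integral sign, justified by the space-dependent bounds of Lemma \ref{lemmaphi}, gives for $k=1,2,3$
\begin{align*}
(-A)^{\delta_1}D^k\phi_{\tau,\epsilon}(x)(\cdots)=-\int_0^\infty e^{-\mu t}\,(-A)^{\delta_1}D^k\big[\mathbb{E}h(X(t,x))\big](\cdots)\,\mathrm{d}t,
\end{align*}
where $(\cdots)$ denotes the insertion of the test directions carrying $(-A)^{\delta_2},(-A)^{\delta_3}$ as in \eqref{phi5}--\eqref{phi7}. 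Everything thus reduces to a space-\emph{independent} bound on the weighted derivatives of the Markov semigroup $t\mapsto\mathbb{E}h(X(t,x))$. The point of part (i) is that, unlike the space-dependent estimates of Lemma \ref{lemmaphi}, these bounds must carry no factor $(1+\|x\|^2)$; the price is the blow-up factor $\tau^{-\frac{\beta}{2}+\epsilon}=\mu^{-1}$, which is exactly what the weight $e^{-\mu t}$ produces when integrated against a bound that is merely bounded (rather than integrable) for large $t$.

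For the space-independent derivative bounds I would work with the variation processes of the flow $x\mapsto X(t,x)$: the first variation $\eta^{v}$ solving the linearization of \eqref{spde} with $\eta^v(0)=v$, and the higher variations $\zeta^{u,v},\xi^{u,v,w}$ solving the successively differentiated equations forced by $D^2F,D^3F$. The key structural observation is that Assumption \ref{assumpF_1} (which yields $\langle DF(\cdot)v,v\rangle\le K\|v\|^2$ with $K<\lambda_1$) gives the $x$-uniform contraction $\|\eta^v(t)\|\le e^{-(\lambda_1-K)t}\|v\|$, while Assumption \ref{assumpF_3} provides $x$-uniform bounds on $(-A)^{-\sigma}D^jF$; together with $h\in\mathcal{C}^4_b$ these are the only inputs, so no moment of $X(t,x)$, and hence no power of $\|x\|$, ever enters. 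Representing each $D^k[\mathbb{E}h(X(t,x))]$ through these processes and absorbing the fractional powers $(-A)^{\delta_i}$ (and the regularization order $\sigma$ coming from $D^jF$) by the smoothing estimates \eqref{semigroup1}--\eqref{semigroup3}, one obtains a space-independent bound of the form $\|(-A)^{\delta_1}D^k[\mathbb{E}h(X(t,x))](\cdots)\|\le C\,t^{-\sum_i\delta_i}$ for $t\in(0,1]$ and $\le C$ for $t\ge1$. Integrating against $e^{-\mu t}$ and splitting at $t=1$ then yields
\begin{align*}
\int_0^\infty e^{-\mu t}\big\|(-A)^{\delta_1}D^k[\mathbb{E}h(X(t,x))](\cdots)\big\|\,\mathrm{d}t\le C\int_0^1 t^{-\sum_i\delta_i}\,\mathrm{d}t+C\int_1^\infty e^{-\mu t}\,\mathrm{d}t\le C+C\mu^{-1}\le C\tau^{-\frac{\beta}{2}+\epsilon},
\end{align*}
where $\sum_i\delta_i<1$ secures the first integral and $\mu\le1$ the last inequality; this is \eqref{phi5}--\eqref{phi7}.

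For part (ii) I would subtract the representations \eqref{phiexp} and \eqref{phiinte}, noting that the constant $\pi(h)$ drops after differentiation, to get
\begin{align*}
(-A)^{\delta_1}\big(D\phi_{\tau,\epsilon}(x)-D\phi(x)\big)=-\int_0^\infty\big(1-e^{-\mu t}\big)\,(-A)^{\delta_1}D\big[\mathbb{E}h(X(t,x))\big]\,\mathrm{d}t.
\end{align*}
The argument is then a double application of dominated convergence. For each fixed $x$, the integrand is bounded in norm by $\|(-A)^{\delta_1}D[\mathbb{E}h(X(t,x))]\|$, which is integrable in $t$ (its space-dependent bound carries the short-time smoothing $t^{-\delta_1}$ together with the exponential decay in $t$ underlying Lemma \ref{lemmaphi}), while $1-e^{-\mu t}\to0$ pointwise as $\tau\to0$; hence the $t$-integral tends to $0$, that is, $\|(-A)^{\delta_1}(D\phi_{\tau,\epsilon}(x)-D\phi(x))\|\to0$ for every $x$. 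Finally, the space-dependent estimates \eqref{phi1} and \eqref{phiexact} give $\|(-A)^{\delta_1}(D\phi_{\tau,\epsilon}(x)-D\phi(x))\|^2\le C\|h\|_{1,\infty}^2(1+\|x\|^2)^2$, which is $\pi$-integrable because $\pi(\|\cdot\|^p)<\infty$ for all $p$ (Proposition \ref{propexact}); a second dominated convergence in $x$ against $\pi$ then gives $\int_H\|(-A)^{\delta_1}(D\phi_{\tau,\epsilon}(x)-D\phi(x))\|^2\,\pi(\mathrm{d}x)\to0$.

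The main obstacle is the space-independent control of the higher derivatives in part (i). Whereas the first variation contracts uniformly, the second and third variations are forced by $D^2F$ and $D^3F$, which are bounded only after the regularization $(-A)^{-\sigma}$ of Assumption \ref{assumpF_3}; recovering the output power $(-A)^{\delta_1}$ and inserting the input powers $(-A)^{\delta_2},(-A)^{\delta_3}$ therefore forces one to pass fractional powers through the semigroup factors in the Duhamel representations while keeping the total exponent below the integrability threshold $1$ of the kernels $(t-s)^{-(\cdot)}$. Distributing these powers among the semigroup and the variation factors, using \eqref{semigroup3} where an $L^2$-in-time gain is needed, and verifying that the exponential contraction of the $\eta$'s survives the insertion of the singular kernels so that the large-$t$ bound stays bounded uniformly in $x$, is the delicate bookkeeping on which the whole estimate rests.
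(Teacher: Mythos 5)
Your proposal is correct and follows essentially the same route as the paper: for part (i) you reduce to a two-regime, $x$-uniform bound on $\|(-A)^{\delta_1}D^k[\mathbb{E}h(X(t,x))](\cdots)\|$ ($Ct^{-\sum_i\delta_i}$ on $(0,1]$ via the variation processes and smoothing, a constant for $t\ge1$), and the factor $\tau^{-\frac{\beta}{2}+\epsilon}$ comes from $\int_1^\infty e^{-\mu t}\,\mathrm{d}t=O(\mu^{-1})$ exactly as in the paper's proof of \eqref{phi7}. For part (ii) your subtraction of the two integral representations followed by dominated convergence in $t$ and then in $x$ (using the $(1+\|x\|^2)$-factored bound \eqref{Dutge0} and $\pi(\|\cdot\|^4)<\infty$) is the paper's argument in only trivially different packaging.
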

\subsection{Central limit theorem of the exact solution}It is proved in \cite{clt12} that the CLT and the weak LLNs for the exact solution of \eqref{spde} hold, i.e.,
the time average $\frac{1}{T}\int_0^Th(X(t))\mathrm{d}t$ converges to the ergodic limit $\pi(h)$ in probability and the normalized fluctuation around $\pi(h)$ can be described by a centered Gaussian random variable. Since \cite{clt12} considers the case of more general stochastic processes, here we give a direct proof for the case of \eqref{spde} for readers' convenience.

\begin{prop}\label{CLTexact}
Let Assumptions \ref{assumpF_1}, \ref{assump_Q} and \ref{assumpX_0} hold. Then for $h\in\mathcal{C}^1_b,$ we have
\begin{align}\label{disexact}
\frac{1}{\sqrt{T}}\int_0^T[h(X(t))-\pi(h)]\mathrm{d}t\overset{d}{\longrightarrow} \mathcal{N}\big(0,\pi(\|Q^{\frac{1}{2}}D\phi(\cdot)\|^2)\big)\text{ as }T\rightarrow \infty,
\end{align}
 where $\mathcal{N}\big(0,\pi(\|Q^{\frac{1}{2}}D\phi(\cdot)\|^2)\big)$ is the normal distribution with $\phi$ being the solution \eqref{phiexp} of \eqref{poisson2}.
\end{prop}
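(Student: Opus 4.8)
The plan is to represent the additive functional as a martingale via the Poisson equation \eqref{poisson2} and then apply a martingale central limit theorem. By \eqref{phiexp} and the Markov property of $X$, the process
\[
N_t := \phi(X(t)) - \phi(X(0)) - \int_0^t [h(X(s)) - \pi(h)]\,\mathrm{d}s
\]
is an $\mathcal{F}_t$-martingale: the semigroup identity $\mathbb{E}[\phi(X(t))\mid\mathcal{F}_s] - \phi(X(s)) = \int_s^t(\mathbb{E}[h(X(r))\mid\mathcal{F}_s] - \pi(h))\,\mathrm{d}r$ follows by differentiating \eqref{phiexp}, and integrability is ensured by the growth bound $|\phi(x)|\le C(1+\|x\|^2)$ of Lemma \ref{lemmaphi}(ii) together with the uniform moment bounds of Proposition \ref{propexact}. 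Rearranging gives
\[
\frac{1}{\sqrt{T}}\int_0^T [h(X(t)) - \pi(h)]\,\mathrm{d}t = \frac{\phi(X(T)) - \phi(X(0))}{\sqrt{T}} - \frac{N_T}{\sqrt{T}}.
\]

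Next I would dispose of the boundary term: since $|\phi(x)|\le C(1+\|x\|^2)$ and $\sup_{t\ge0}\mathbb{E}\|X(t)\|^4<\infty$ by Proposition \ref{propexact}, the numerator is bounded in $L^2$ uniformly in $T$, so $T^{-\frac{1}{2}}(\phi(X(T)) - \phi(X(0)))\to 0$ in $L^2$, hence in probability. For the martingale term, applying It\^o's formula to $\phi(X(t))$ identifies $N_t$ with the stochastic integral $\int_0^t\langle D\phi(X(s)),\mathrm{d}W(s)\rangle$, whence, writing $\mathrm{d}W = \sum_j Q^{\frac{1}{2}}e_j\,\mathrm{d}\beta_j$ and using self-adjointness of $Q^{\frac{1}{2}}$,
\[
\langle N\rangle_T = \int_0^T \|Q^{\frac{1}{2}}D\phi(X(s))\|^2\,\mathrm{d}s.
\]
Crucially, this quadratic variation involves only $D\phi$, for which Lemma \ref{lemmaphi}(ii) supplies $\|(-A)^{\delta_1}D\phi(x)\|\le C(1+\|x\|^2)$; combined with Assumption \ref{assump_Q} this yields $\|Q^{\frac{1}{2}}D\phi(x)\|\le C(1+\|x\|^2)$, so the integrand $g(x):=\|Q^{\frac{1}{2}}D\phi(x)\|^2$ has polynomial growth and is $\pi$-integrable.

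The heart of the argument is the weak law of large numbers for the quadratic variation, namely $T^{-1}\langle N\rangle_T \to \pi(g) = \pi(\|Q^{\frac{1}{2}}D\phi(\cdot)\|^2) =: \tilde{\sigma}^2$ in probability. I would prove this by bounding the second moment of $T^{-1}\int_0^T[g(X(t)) - \pi(g)]\,\mathrm{d}t$: expanding the square into a double time integral and controlling the covariance $\mathrm{Cov}(g(X(s)),g(X(t)))$ through the Markov property together with the exponential decay \eqref{exactmeas}, extended to the polynomially growing test function $g$ as in Remark \ref{rem1}, produces a bound of order $T^{-1}$, so Chebyshev's inequality gives convergence in probability.

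Finally, since $T^{-1}\langle N\rangle_T\to\tilde{\sigma}^2$ in probability, the martingale central limit theorem — equivalently, representing the continuous martingale $N$ as a time-changed Brownian motion by the Dambis--Dubins--Schwarz theorem — yields $T^{-\frac{1}{2}}N_T\overset{d}{\longrightarrow}\mathcal{N}(0,\tilde{\sigma}^2)$, and the same holds for $-T^{-\frac{1}{2}}N_T$ by symmetry of the Gaussian limit. Combining with the vanishing boundary term via Slutsky's theorem gives \eqref{disexact}. I expect the principal obstacle to be the rigorous identification of $\langle N\rangle_T$ under only the $\mathcal{C}^1$-regularity of $\phi$ available here: the second-order estimate in Lemma \ref{lemmaphi}(ii) requires the additional Assumption \ref{assumpF_3}, which is not imposed in this proposition, so justifying It\^o's formula — or, alternatively, a Galerkin or semigroup-smoothing approximation whose martingale parts converge using only $D\phi_n\to D\phi$ — must be handled with care, alongside the covariance estimate underlying the weak LLN for $\langle N\rangle_T$.
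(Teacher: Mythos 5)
Your proposal follows essentially the same route as the paper: decompose via the Poisson equation \eqref{poisson2} and It\^o's formula into a boundary term plus the stochastic integral $\int_0^T\langle D\phi(X(t)),\mathrm{d}W(t)\rangle$, kill the boundary term by Ch\'ebyshev, identify the limit variance through the decay estimate of Remark \ref{rem1} applied to $\|Q^{\frac{1}{2}}D\phi(\cdot)\|^2$, and conclude by a martingale CLT together with Slutsky's theorem. If anything, you are slightly more careful than the paper on two points it glosses over — proving convergence in probability of $T^{-1}\langle N\rangle_T$ rather than only of its expectation before invoking the martingale CLT, and flagging that It\^o's formula needs second-derivative control on $\phi$, which Lemma \ref{lemmaphi}(ii) only supplies under the extra Assumption \ref{assumpF_3} not listed in the proposition's hypotheses.
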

\begin{proof}
We fist show that $\pi(\|Q^{\frac{1}{2}}D\phi(\cdot)\|^2)$ is well-defined.
By Lemma \ref{lemmaphi} (\romannumeral2), 
we have 
\begin{align}\label{estimatef_1}
\|Q^{\frac{1}{2}}D\phi(x)\|^2&\leq \|Q\|_{\mathcal{L}(H)}\|D\phi(x)\|^2\leq C\|h\|^2_{1,\infty}(1+\|x\|^4).
\end{align}
As a consequence, $\pi(\|Q^{\frac{1}{2}}D\phi(\cdot)\|^2)<\infty.$

By Poisson's equation \eqref{poisson2} and It$\mathrm{\hat{o}}$'s formula, we get
\begin{align*}
&\quad\frac{1}{\sqrt{T}}\int_0^T[h(X(t))-\pi(h)]\mathrm{d}t=\frac{1}{\sqrt{T}}\int_0^T\mathcal{L}\phi(X(t))\mathrm{d}t\\
&=\frac{1}{\sqrt{T}}\int_0^T\mathrm{d}\phi(X(t))-\frac{1}{\sqrt{T}}\int_0^T\langle D\phi(X(t)),\mathrm{d}W(t)\rangle\\
&=\frac{1}{\sqrt{T}}[\phi(X(T))-\phi(X(0))]-\frac{1}{\sqrt{T}}\int_0^T\langle D\phi(X(t)),\mathrm{d}W(t)\rangle.
\end{align*}
Applying Ch\'ebyshev's inequality and Lemma \ref{lemmaphi} (\romannumeral2) leads to that for each fixed $a>0,$
\begin{align*}
&\quad\mathbb{P}\Big(\frac{1}{\sqrt{T}}[\phi(X(T))-\phi(X(0))]>a\Big)\leq a^{-2}\mathbb{E}\Big|\frac{1}{\sqrt{T}}[\phi(X(T))-\phi(X(0))]\Big|^2\\
&\leq \frac{C}{a^2T}\sup_{t\ge0}\mathbb{E}|\phi(X(t))|^2\leq \frac{C}{a^2T}(1+\sup_{t\ge 0}\mathbb{E}\|X(t)\|^4)\to 0\text{ as }T\to\infty.
\end{align*}
By the It\^o isometry, we have
\begin{align*}
&\quad\mathbb{E}\Big|\frac{1}{\sqrt{T}}\int_0^T\langle D\phi(X(t)),\mathrm{d}W(t)\rangle\Big|^2=\frac{1}{T}\int_0^T\mathbb{E}\|Q^{\frac{1}{2}}D\phi(X(t))\|^2\mathrm{d}t\\
&= \frac{1}{T}\int_0^T\big[\mathbb{E}\|Q^{\frac{1}{2}}D\phi(X(t))\|^2-\pi(\|Q^{\frac{1}{2}}D\phi(\cdot)\|^2)\big]\mathrm{d}t+\pi(\|Q^{\frac{1}{2}}D\phi(\cdot)\|^2).
\end{align*}
 Remark \ref{rem1} and Lemma \ref{lemmaphi} (\romannumeral2) imply that
\begin{align*}
&\quad\Big|\frac{1}{T}\int_0^T\big[\mathbb{E}\|Q^{\frac{1}{2}}D\phi(X(t))\|^2-\pi(\|Q^{\frac{1}{2}}D\phi(\cdot)\|^2)\big]\mathrm{d}t\Big|
&\leq \frac{C}{T}\int_0^Te^{-\frac{1}{2}(\lambda_1-K)t}\mathrm{d}t\to0\text{ as }T\to\infty,
\end{align*}
which gives $\mathbb{E}\Big|\frac{1}{\sqrt{T}}\int_0^T\langle D\phi(X(t)),\mathrm{d}W(t)\rangle\Big|^2\to\pi(\|Q^{\frac{1}{2}}D\phi(\cdot)\|^2)\text{ as }T\to\infty.$
Hence the stochastic integral $\frac{1}{\sqrt{T}}\int_0^T\langle D\phi(X(t)),\mathrm{d}W(t)\rangle$ converges to $\mathcal{N}\big(0,\pi(\|Q^{\frac{1}{2}}D\phi(\cdot)\|^2)\big)$ in distribution as $T\to\infty.$ Applying Slutsky's theorem (see e.g., \cite[Theorem $9.1.6$]{measure}) yields \eqref{disexact}. The proof is finished.
\end{proof}

\begin{coro}
Under conditions in Proposition \ref{CLTexact}, the weak LLNs holds, i.e. $$\frac{1}{T}\int_0^Th(X(t))\mathrm{d}t\overset{\mathbb{P}}\longrightarrow\pi(h)\text{ as }T\to\infty.$$
\end{coro}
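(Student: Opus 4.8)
The plan is to deduce the weak LLNs directly from the CLT established in Proposition \ref{CLTexact} by a simple rescaling. I would start from the elementary identity
\[
\frac{1}{T}\int_0^T[h(X(t))-\pi(h)]\,\mathrm{d}t=\frac{1}{\sqrt{T}}\cdot\frac{1}{\sqrt{T}}\int_0^T[h(X(t))-\pi(h)]\,\mathrm{d}t.
\]
By Proposition \ref{CLTexact}, the second factor converges in distribution to $\mathcal{N}\big(0,\pi(\|Q^{\frac{1}{2}}D\phi(\cdot)\|^2)\big)$ as $T\to\infty$; in particular this family is tight, hence bounded in probability. Since the deterministic prefactor $T^{-\frac{1}{2}}$ tends to zero, the product converges to zero in probability. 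This is the standard consequence of Slutsky's theorem: the product of a sequence converging in distribution to a proper random variable with a deterministic null sequence converges in probability to zero.

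For completeness I would note that one can instead give a fully self-contained verification by reusing the It\^o decomposition from the proof of Proposition \ref{CLTexact}. Dividing that identity by $\sqrt{T}$ gives
\[
\frac{1}{T}\int_0^T[h(X(t))-\pi(h)]\,\mathrm{d}t=\frac{1}{T}[\phi(X(T))-\phi(X(0))]-\frac{1}{T}\int_0^T\langle D\phi(X(t)),\mathrm{d}W(t)\rangle.
\]
For the boundary term, the bound $\sup_{t\ge 0}\mathbb{E}|\phi(X(t))|^2\leq C(1+\sup_{t\ge 0}\mathbb{E}\|X(t)\|^4)<\infty$ coming from Lemma \ref{lemmaphi} (\romannumeral2) and Proposition \ref{propexact} yields $\mathbb{E}\big|\tfrac{1}{T}[\phi(X(T))-\phi(X(0))]\big|\leq CT^{-1}\to 0$, so this term vanishes in $L^1$ and thus in probability. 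For the stochastic integral, the It\^o isometry gives
\[
\mathbb{E}\Big|\frac{1}{T}\int_0^T\langle D\phi(X(t)),\mathrm{d}W(t)\rangle\Big|^2=\frac{1}{T}\cdot\frac{1}{T}\int_0^T\mathbb{E}\|Q^{\frac{1}{2}}D\phi(X(t))\|^2\,\mathrm{d}t,
\]
and since the averaged integral converges to the finite limit $\pi(\|Q^{\frac{1}{2}}D\phi(\cdot)\|^2)$ as already shown in the proof of Proposition \ref{CLTexact}, the extra factor $T^{-1}$ forces this quantity to zero, so the stochastic term vanishes in $L^2$ and hence in probability. Adding the two contributions gives the claim.

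Since the result is a direct corollary of the CLT, there is no genuine obstacle. The only point requiring care is conceptual rather than technical: it is convergence in distribution to a \emph{proper} (tight) limit, as opposed to a heuristic order-$\sqrt{T}$ statement, that legitimizes multiplying by the vanishing factor $T^{-\frac{1}{2}}$ and passing to the limit. I would present the Slutsky argument as the main proof for brevity, mentioning the second decomposition only as an equivalent elementary verification.
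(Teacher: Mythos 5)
Your proposal is correct. Your primary argument (Slutsky/tightness) is a genuinely different and more economical route than the paper's: the paper does not invoke the CLT at all, but instead reuses the It\^o decomposition
$\frac{1}{T}\int_0^T[h(X(t))-\pi(h)]\,\mathrm{d}t=\frac{1}{T}[\phi(X(T))-\phi(X(0))]-\frac{1}{T}\int_0^T\langle D\phi(X(t)),\mathrm{d}W(t)\rangle$,
bounds the second moments of both terms via Lemma \ref{lemmaphi} (\romannumeral2), the It\^o isometry and Remark \ref{rem1}, and concludes with Ch\'ebyshev's inequality --- which is exactly your ``for completeness'' alternative. What your Slutsky route buys is brevity and the conceptual point you correctly flag: tightness of the limit law is what licenses multiplying by the null sequence $T^{-1/2}$. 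What the paper's route buys is a quantitative $L^2$ rate (the second moment of the time average decays like $T^{-1}$), which the purely distributional argument does not yield. One cosmetic slip: you say ``dividing that identity by $\sqrt{T}$,'' but the identity in the CLT proof already carries the $T^{-1/2}$ normalization, so you are dividing by a further $\sqrt{T}$ to reach the $T^{-1}$ scaling; the displayed formula you then write is nonetheless the correct one.
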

\begin{proof}
Similarly to the proof of Proposition \ref{CLTexact}, we have
\begin{align*}
\frac{1}{T}\int_0^T[h(X(t))-\pi(h)]\mathrm{d}t=\frac{1}{T}[\phi(X(T))-\phi(X(0))]-\frac{1}{T}\int_0^T\langle D\phi(X(t)),\mathrm{d}W(t)\rangle.
\end{align*}
By Lemma \ref{lemmaphi} (\romannumeral2), the It\^o isometry and Remark \ref{rem1}, it can be proved that both the second moments of $\frac{1}{T}[\phi(X(T))-\phi(X(0))]$ and $\frac{1}{T}\int_0^T\langle D\phi(X(t)),\mathrm{d}W(t)\rangle$ converge to zero as $T\to\infty.$ Hence applying Ch\'ebyshev's inequality leads to the desired weak LLNs.
\end{proof}
\section{Main result and idea of its proof}\label{mainresult}
In this section, we give the main result of this article and show the strategy for proving the main theorem, based on the decomposition of the normalized time-averaging estimator
$\tau^{-\frac{\beta}{2}}[\Pi^N_{\tau}(h)-\pi(h)]$.

\subsection{Central limit theorem of full discretization}
For simplicity, assume that $\tau^{-1-\beta}$ is an integer and set $m:=\tau^{-1-\beta}.$
Define the empirical measure $\Pi^N_{\tau}:=m^{-1}\sum_{k=0}^{m-1}\delta_{X^N_k}$ with $\delta_{x}$ being the Dirac measure at $x\in H_N$. Then for a Borel measurable function $h,$ we have $\Pi^N_{\tau}(h)=m^{-1}\sum_{k=0}^{m-1}h(X^N_k)$, which is called the time-averaging estimator. Similarly to \cite{BK17}, the time-averaging estimator can be shown to converge to $\pi(h)$ in the mean sense with a certain order. Here, we are interested in studying the fluctuation of the term $[\Pi^N_{\tau}(h)-\pi(h)]$. 

With these settings in mind, we can state the main result of our study. 
\begin{thm}\label{main}
Let Assumptions \ref{assumpF_1}, \ref{assump_Q}, \ref{assumpX_0} and \ref{assumpF_3} hold. Assume that $N=\lfloor \tau^{-\alpha}\rfloor$ $($i.e., the integer part of $\tau^{-\alpha}$$)$ with $\alpha\in(\frac{1}{4},\frac{1}{2})$. Then for $h\in \mathcal{C}^4_b$, we have
\begin{align}\label{mainre}
\tau^{-\frac{\beta}{2}}\big[\Pi^N_{\tau}(h)-\pi(h)\big]\overset{d}{\longrightarrow} \mathcal{N}\big(0,\pi(\|Q^{\frac{1}{2}}D\phi(\cdot)\|^2)\big)\text{ as }\tau\rightarrow 0.
\end{align}
\end{thm}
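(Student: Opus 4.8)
The plan is to follow the template of Proposition~\ref{CLTexact}, but for the discrete time average, by splitting the normalized estimator into a martingale difference series $\mathcal{M}^N_{\tau}$ that carries the Gaussian limit and a remainder $\mathcal{R}^N_{\tau}$ that vanishes in probability, and then invoking Slutsky's theorem exactly as in the proof of Proposition~\ref{CLTexact}. Writing $t_m=m\tau=\tau^{-\beta}$, so that the scale $\tau^{-\frac{\beta}{2}}=\sqrt{t_m}$ plays the role of $\sqrt{T}$, I record the algebraic identity
\begin{align*}
\tau^{-\frac{\beta}{2}}\big[\Pi^N_{\tau}(h)-\pi(h)\big]=\frac{1}{\sqrt{t_m}}\sum_{k=0}^{m-1}\tau\big[h(X^N_k)-\pi(h)\big].
\end{align*}
Centering at $\pi^N_{\tau}(h)$ peels off a purely deterministic bias $\sqrt{t_m}\,[\pi^N_{\tau}(h)-\pi(h)]$, which by Remark~\ref{remark2} is bounded by $\tau^{-\frac{\beta}{2}}(C\tau^{\beta-\gamma}+CN^{-2\beta+\gamma})$; with $N=\lfloor\tau^{-\alpha}\rfloor$ this leaves the powers $\tau^{\frac{\beta}{2}-\gamma}$ and $\tau^{\beta(2\alpha-\frac12)-\alpha\gamma}$, both of which tend to zero precisely because $\alpha>\frac14$. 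This is exactly where the lower restriction on $\alpha$ enters.

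Next I would use the modified Poisson equation~\eqref{poisson} to substitute $h(x)-\pi(h)=\mathcal{L}\phi_{\tau,\epsilon}(x)-\tau^{\frac{\beta}{2}-\epsilon}\phi_{\tau,\epsilon}(x)$. A one-step Taylor expansion of $\phi_{\tau,\epsilon}$ along the scheme~\eqref{fulldiscrete}, followed by conditioning on $\mathcal{F}_{t_k}$, produces the exact telescoping
\begin{align*}
\sum_{k=0}^{m-1}\big(\mathbb{E}[\phi_{\tau,\epsilon}(X^N_{k+1})\mid\mathcal{F}_{t_k}]-\phi_{\tau,\epsilon}(X^N_k)\big)=\phi_{\tau,\epsilon}(X^N_m)-\phi_{\tau,\epsilon}(X^N_0)-\sum_{k=0}^{m-1}D_k,
\end{align*}
where $D_k:=\phi_{\tau,\epsilon}(X^N_{k+1})-\mathbb{E}[\phi_{\tau,\epsilon}(X^N_{k+1})\mid\mathcal{F}_{t_k}]$ is a martingale difference whose leading part is $\langle D\phi_{\tau,\epsilon}(X^N_k),S^N(\tau)P^N\Delta W_k\rangle$; I set $\mathcal{M}^N_{\tau}:=-\frac{1}{\sqrt{t_m}}\sum_k D_k$. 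Comparing the one-step conditional increment with $\tau\mathcal{L}\phi_{\tau,\epsilon}(X^N_k)$, and splitting $\mathcal{L}=\mathcal{L}^N+(\mathcal{L}-\mathcal{L}^N)$ to separate the spatial truncation, the remainder $\mathcal{R}^N_{\tau}$ collects the boundary term, the spatial error $\frac{1}{\sqrt{t_m}}\sum_k\tau(\mathcal{L}-\mathcal{L}^N)\phi_{\tau,\epsilon}(X^N_k)$, the time-stepping error $\frac{1}{\sqrt{t_m}}\sum_k e_k$ coming from the mismatch between $\mathbb{E}[\phi_{\tau,\epsilon}(X^N_{k+1})\mid\mathcal{F}_{t_k}]-\phi_{\tau,\epsilon}(X^N_k)$ and $\tau\mathcal{L}^N\phi_{\tau,\epsilon}(X^N_k)$, and the modification term $-\tau^{-\epsilon}\,\frac1m\sum_k\phi_{\tau,\epsilon}(X^N_k)$. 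For this last term I would use $\pi(\phi_{\tau,\epsilon})=0$, which follows at once from the representation~\eqref{phiinte} and the invariance of $\pi$, and then apply the semi-discrete Poisson equation~\eqref{GammaNtau} to rewrite $\frac1m\sum_k\phi_{\tau,\epsilon}(X^N_k)$ as a further boundary-plus-martingale decomposition governed by $\Gamma^N_{\tau,\epsilon}$.

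For the Gaussian limit of $\mathcal{M}^N_{\tau}$ I would apply a martingale central limit theorem for triangular arrays. The conditional Lyapunov condition follows from the fourth-moment bound $\frac{1}{t_m^2}\sum_k\mathbb{E}|D_k|^4\le C\,t_m^{-2}m\tau^2=C\tau^{1+\beta}\to0$, using the space-dependent estimate~\eqref{phi1} together with the uniform moments of Proposition~\ref{propfull}. The quadratic characteristic is $\frac{1}{t_m}\sum_k\mathbb{E}[D_k^2\mid\mathcal{F}_{t_k}]$, which reduces to $\frac1m\sum_k\|Q^{\frac12}D\phi_{\tau,\epsilon}(X^N_k)\|^2$ up to $S^N(\tau)$- and $P^N$-corrections, and I would show it converges in probability to $\pi(\|Q^{\frac12}D\phi(\cdot)\|^2)$ in three steps: the time-average fluctuation around its mean vanishes in $L^2$ by the exponential decay of correlations implied by Proposition~\ref{propfull}; the mean converges to $\pi^N_{\tau}(\|Q^{\frac12}D\phi_{\tau,\epsilon}\|^2)$; and the latter converges to the target by $|\pi^N_{\tau}(\cdot)-\pi(\cdot)|\to0$ together with the $L^2(\pi)$ convergence $D\phi_{\tau,\epsilon}\to D\phi$ of Lemma~\ref{lemmaDDphi}(\romannumeral2). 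It is essential here that the limiting variance is built from $\phi$ rather than $\phi_{\tau,\epsilon}$, which is precisely what matches the exact-solution variance of Proposition~\ref{CLTexact}.

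The hard part will be proving $\mathcal{R}^N_{\tau}\overset{\mathbb{P}}{\longrightarrow}0$, because the competing error terms must be balanced by a single admissible parameter $\epsilon\in(0,\frac{\beta}{2})$. The modification and $\Gamma^N_{\tau,\epsilon}$-martingale contributions carry the amplifying factor $\tau^{-\epsilon}$ and are controlled only for $\epsilon<\frac{\beta}{2}$, since the standard deviation of the associated martingale scales like $\tau^{\frac{\beta}{2}}$; conversely the time-stepping error $\sum_k e_k$, which stems from the discrepancy between the exponential-integrator one-step map and the continuous generator and must be measured in the stronger $(-A)^{\delta}$-norms, forces $\epsilon$ above a lower threshold. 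Here the space-independent estimates~\eqref{phi5}--\eqref{phi7} of Lemma~\ref{lemmaDDphi}(\romannumeral1) are indispensable: replacing the polynomial factors $(1+\|x\|^2)$ by the uniform bound $\tau^{-\frac{\beta}{2}+\epsilon}$ prevents the accumulation of high spatial moments over the $m\sim\tau^{-1-\beta}$ summands while keeping track of the exact power of $\tau$. I expect these two requirements to cut out an overlapping, nonempty interval of admissible $\epsilon$ precisely when $\alpha\in(\frac14,\frac12)$, and the closing moment estimates to rest on showing that the relevant exponential functionals of the Brownian increments are bounded in the mean, exploiting the independence of the increments of $W$, the tower property of conditional expectation, and the exponential integrability from Assumption~\ref{assumpX_0} and Proposition~\ref{propfull}. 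Once both convergences are established, Slutsky's theorem yields~\eqref{mainre}.
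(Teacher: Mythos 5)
Your plan follows the paper's proof essentially verbatim: the same modified Poisson equation \eqref{poisson}, the same martingale-plus-remainder decomposition with the semi-discrete Poisson equation \eqref{GammaNtau} handling the $\tau^{-\epsilon}$-modification term, the same martingale CLT with the limiting variance identified via Lemma \ref{lemmaDDphi}(\romannumeral2), the same balancing of $\epsilon$ through the space-dependent versus space-independent regularity estimates, the same exponential-functional bounds for the hard remainder terms, and the same closing appeal to Slutsky's theorem. The only cosmetic deviations are that you take the martingale to be the full conditionally centered one-step increment $D_k$ rather than just its leading stochastic-integral part (the extra pieces you absorb are exactly the terms the paper estimates separately inside $\mathcal{R}^N_{\tau}$), and you peel off the deterministic bias $\tau^{-\frac{\beta}{2}}[\pi^N_{\tau}(h)-\pi(h)]$ up front, which the paper does not need to do.
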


We remark that the function $f:=\|Q^{\frac{1}{2}}D\phi(\cdot)\|^2$ in Theorem \ref{main} belongs to $\mathcal{C}^1_4$, i.e., $\|Df(x)\|\leq C(1+\|x\|^4),\,x\in H$.
By Lemma \ref{lemmaphi} (\romannumeral2), 
we have 
\begin{align}
\|Df(x)\|&=\sup_{\|l\|=1}|Df(x)l|= \sup_{\|l\|=1}2|\langle Q^{\frac{1}{2}}D\phi(x),Q^{\frac{1}{2}}D^2\phi(x)l\rangle|\notag\\
&\leq C\|D\phi(x)\|\|D^2\phi(x)\|_{\mathcal{L}(H)}\leq C\|h\|^2_{2,\infty}(1+\|x\|^4).\label{estimatef_2}
\end{align}

  \subsection{Idea of the proof}\label{proofmain}

We first decompose the term $\tau^{-\frac{\beta}{2}}[\Pi^N_{\tau}(h)-\pi(h)]$ by using the modified Poisson's equation \eqref{poisson}. 

 Define $\Psi^N_{\tau}(h):=m^{-1}\sum_{k=0}^{m-1}\phi_{\tau,\epsilon}(X^N_k)$ with $\phi_{\tau,\epsilon}$ being the solution \eqref{phiinte} of the modified Poisson's equation \eqref{poisson}.
  It is clear that
\begin{align*}
&\Pi^N_{\tau}(h)-\pi(h)=m^{-1}\sum_{k=0}^{m-1}(h(X^N_k)-\pi(h))=m^{-1}\sum_{k=0}^{m-1}\Big(\mathcal{L}\phi_{\tau,\epsilon}(X^N_k)-\tau^{\frac{\beta}{2}-\epsilon}\phi_{\tau,\epsilon}(X^N_k)\Big)\\
=&~\tau^{\beta}\sum_{k=0}^{m-1}\Big[\mathcal{L}\phi_{\tau,\epsilon}(X^N_k)\tau-(\phi_{\tau,\epsilon}(X^N_{k+1})-\phi_{\tau,\epsilon}(X^N_k))\Big]
+\tau^{\beta}\sum_{k=0}^{m-1}\big[\phi_{\tau,\epsilon}(X^N_{k+1})-\phi_{\tau,\epsilon}(X^N_k)\big]-\tau^{\frac{\beta}{2}-\epsilon}\Psi^N_{\tau}(h).
\end{align*}
The Taylor expansion implies that
\begin{align}\label{taylor}
&\phi_{\tau,\epsilon}(X^N_{k+1})-\phi_{\tau,\epsilon}(X^N_k)\notag\\
=&~\big\langle X^N_{k+1}-X^N_k,D\phi_{\tau,\epsilon}(X^N_k)\big\rangle+\frac{1}{2}\big\langle X^N_{k+1}-X^N_k,D^2\phi_{\tau,\epsilon}(X^N_k)(X^N_{k+1}-X^N_k)\big\rangle+\mathcal{R}^N_{k,\phi_{\tau,\epsilon}}
\end{align}
with 
\begin{align*}
\mathcal{R}^N_{k,\phi_{\tau,\epsilon}}=\frac{1}{2}\int_0^1(1-t)^2D^3\phi_{\tau,\epsilon}\big(X^N_k+t(X^N_{k+1}-X^N_k)\big)\big(X^N_{k+1}-X^N_k,X^N_{k+1}-X^N_k,X^N_{k+1}-X^N_k\big)\mathrm{d}t.
\end{align*}
 By the differential form \eqref{differential} of the full discretization,
we get
\begin{align*}
X^N_{k+1}-X^N_k=\int_{t_k}^{t_{k+1}}A^NX^{N,\tau}_t\mathrm{d}t+\int_{t_k}^{t_{k+1}} S^N(t-t_k)F^N(X^N_k)\mathrm{d}t+\int_{t_k}^{t_{k+1}} S^N(t-t_k)P^N\mathrm{d}W(t).
\end{align*}
Thus, \eqref{generator} and \eqref{taylor} lead to
\begin{align*}
&\mathcal{L}\phi_{\tau,\epsilon}(X^N_k)\tau-(\phi_{\tau,\epsilon}(X^N_{k+1})-\phi_{\tau,\epsilon}(X^N_k))\\
=&~\Big\langle \int_{t_k}^{t_{k+1}}A^N (X^N_k-X^{N,\tau}_t)\mathrm{d}t,D\phi_{\tau,\epsilon}(X^N_k)\Big\rangle+
\Big\langle \int_{t_k}^{t_{k+1}} (\mathrm{Id}-S^N(t-t_k))F(X^N_k)\mathrm{d}t,D\phi_{\tau,\epsilon}(X^N_k)\Big\rangle\\
& -\Big\langle \int_{t_k}^{t_{k+1}} S^N(t-t_k)P^N \mathrm{d}W(t),D\phi_{\tau,\epsilon}(X^N_k)\Big\rangle
+\frac{1}{2}\mathrm{Tr}(QD^2\phi_{\tau,\epsilon}(X^N_k))\tau\\
&-\frac{1}{2}\big\langle X^N_{k+1}-X^N_k,D^2\phi_{\tau,\epsilon}(X^N_k)(X^N_{k+1}-X^N_k)\big\rangle-\mathcal{R}^N_{k,\phi_{\tau,\epsilon}}.
\end{align*}
Hence, we get
\begin{align}\label{decompose}
\tau^{-\frac{\beta}{2}}\big[\Pi^N_{\tau}(h)-\pi(h)\big]
%-\tau^{\frac{\beta}{2}}\sum_{k=0}^{m-1}\langle \int_{t_k}^{t_{k+1}} S^N(t-t_k)P^N\mathrm{d}W(t),D\phi(X^N_k)\rangle+\text{other terms}\\
&=\mathcal{M}^N_{\tau}+\mathcal{R}^N_{\tau}+\tilde{\mathcal{R}}^N_{\tau},
\end{align}
where
\begin{align*}
\mathcal{M}^N_{\tau}:=-\tau^{\frac{\beta}{2}}\sum_{k=0}^{m-1}\Big\langle \int_{t_k}^{t_{k+1}} S^N(t-t_k)P^N\mathrm{d}W(t),D\phi_{\tau,\epsilon}(X^N_k)\Big\rangle,\quad 
\end{align*}
is the martingale difference series sum, and $\mathcal{R}^N_{\tau}+\tilde{\mathcal{R}}^N_{\tau}$ is the remainder with
\begin{align*}
\tilde{\mathcal{R}}^N_{\tau}:=-\tau^{-\epsilon}\Psi^N_{\tau}(h)
\end{align*}
and
\begin{align*}
\mathcal{R}^N_{\tau}
&:=\tau^{\frac{\beta}{2}}\big[\phi_{\tau,\epsilon}(X^N_m)-\phi_{\tau,\epsilon}(X^N_0)\big]+\tau^{\frac{\beta}{2}}\sum_{k=0}^{m-1}\Big\langle\int_{t_k}^{t_{k+1}} (\mathrm{Id}-S^N(t-t_k))F(X^N_k)\mathrm{d}t,D\phi_{\tau,\epsilon}(X^N_k)\Big\rangle\\
&\quad+\frac{1}{2}\tau^{\frac{\beta}{2}}\sum_{k=0}^{m-1}\Big[\mathrm{Tr}(QD^2\phi_{\tau,\epsilon}(X^N_k))\tau
-\big\langle X^N_{k+1}-X^N_k,D^2\phi_{\tau,\epsilon}(X^N_k)(X^N_{k+1}-X^N_k)\big\rangle\Big]\\
&\quad-\sum_{k=0}^{m-1}\tau^{\frac{\beta}{2}}\mathcal{R}^N_{k,\phi_{\tau,\epsilon}}+\tau^{\frac{\beta}{2}}\sum_{k=0}^{m-1}\Big\langle \int_{t_k}^{t_{k+1}}A^N (X^N_k-X^{N,\tau}_t)\mathrm{d}t,D\phi_{\tau,\epsilon}(X^N_k)\Big\rangle
=:\sum_{i=1}^{5}\mathcal{R}^N_{\tau, i}.
\end{align*}

Based on the decomposition \eqref{decompose}, our strategy to prove  Theorem \ref{main} follows from the following three steps. We first prove that $\mathcal{M}^N_{\tau}\overset{d}{\longrightarrow}  \mathcal{N}(0,\pi(f))\text{ as }\tau\rightarrow 0$. Then we show that $\mathcal{R}^N_{\tau}\overset{\mathbb{P}}{\longrightarrow}  0$ as $\tau\rightarrow 0$. The last step is to show that $\tilde{\mathcal{R}}^N_{\tau}\overset{\mathbb{P}}{\longrightarrow}  0$ as $\tau\rightarrow 0$. The above three steps,
 together with Slutsky's Theorem (see e.g., \cite[Theorem $9.1.6$]{measure}) imply 
$\tau^{-\frac{\beta}{2}}\big[\Pi^N_{\tau}(h)-\pi(h)\big]\overset{d}{\longrightarrow} \mathcal{N}(0,\pi(f))$ as $\tau\rightarrow 0$. %Then we show that $\tau^{-\epsilon}\Psi^N_{\tau}(h)\overset{\mathbb{P}}{\longrightarrow}  0$ with the same $\epsilon.$ As a reslut, \eqref{mainre} holds. 
Precisely, 
 the proof of the Theorem \ref{main} relies on the following three propositions, whose proofs are postponed to the next section.
\begin{prop}\label{prop2}
Under conditions in Theorem \ref{main},
 for any $\epsilon\in (0,\frac{\beta}{2}),$
\begin{align}
\mathcal{M}^N_{\tau}\overset{d}{\longrightarrow}  \mathcal{N}(0,\pi(f))\text{ as }\tau\rightarrow 0.
\end{align}
\end{prop}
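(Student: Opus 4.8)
The plan is to write $\mathcal{M}^N_{\tau}=\sum_{k=0}^{m-1}\xi_k$ with
\[
\xi_k:=-\tau^{\frac{\beta}{2}}\Big\langle\int_{t_k}^{t_{k+1}}S^N(t-t_k)P^N\mathrm{d}W(t),D\phi_{\tau,\epsilon}(X^N_k)\Big\rangle,
\]
and to apply the martingale central limit theorem for triangular arrays, in the version requiring convergence of the sum of conditional variances to a (possibly degenerate) constant together with a conditional Lindeberg condition. Since $D\phi_{\tau,\epsilon}(X^N_k)$ is $\mathcal{F}_{t_k}$-measurable while $\int_{t_k}^{t_{k+1}}S^N(t-t_k)P^N\mathrm{d}W(t)$ is centered and independent of $\mathcal{F}_{t_k}$, one has $\mathbb{E}[\xi_k\mid\mathcal{F}_{t_k}]=0$, so $\{\xi_k\}$ is a martingale difference array relative to $\{\mathcal{F}_{t_k}\}_k$. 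It then suffices to verify that (a) $\sum_{k=0}^{m-1}\mathbb{E}[\xi_k^2\mid\mathcal{F}_{t_k}]\overset{\mathbb{P}}{\longrightarrow}\pi(f)$ and (b) a conditional Lindeberg condition holds.

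For (b) I would verify the stronger Lyapunov-type bound $\sum_{k=0}^{m-1}\mathbb{E}\xi_k^4\to0$. Conditional on $\mathcal{F}_{t_k}$ the inner product is a centered Gaussian scalar with variance $\int_0^\tau\|Q^{\frac12}P^NS^N(s)D\phi_{\tau,\epsilon}(X^N_k)\|^2\mathrm{d}s$, so the Gaussian fourth-moment identity and $\|Q^{\frac12}\|_{\mathcal{L}(H)}<\infty$ give $\mathbb{E}[\xi_k^4\mid\mathcal{F}_{t_k}]\leq C\tau^{2+2\beta}\|D\phi_{\tau,\epsilon}(X^N_k)\|^4$. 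The space-independent estimate \eqref{phi5} with $\delta_1=0$ yields $\|D\phi_{\tau,\epsilon}(X^N_k)\|\leq C\tau^{-\frac{\beta}{2}+\epsilon}$, whence $\mathbb{E}\xi_k^4\leq C\tau^{2+4\epsilon}$ and $\sum_{k=0}^{m-1}\mathbb{E}\xi_k^4\leq Cm\tau^{2+4\epsilon}=C\tau^{1-\beta+4\epsilon}\to0$, since $\beta\leq1$ and $\epsilon>0$. The conditional Lindeberg condition then follows from $\mathbf{1}_{\{|\xi_k|>\delta\}}\leq\delta^{-2}\xi_k^2$ and $L^1$-convergence.

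The heart of the argument is (a). The Itô isometry gives the exact value $\sum_{k=0}^{m-1}\mathbb{E}[\xi_k^2\mid\mathcal{F}_{t_k}]=\tau^{\beta}\sum_{k=0}^{m-1}\int_0^\tau\|Q^{\frac12}P^NS^N(s)D\phi_{\tau,\epsilon}(X^N_k)\|^2\mathrm{d}s$. I would first replace $S^N(s)$ by the identity: writing the difference of squares as $\langle Q^{\frac12}P^N(S^N(s)-\mathrm{Id})v,Q^{\frac12}P^N(S^N(s)+\mathrm{Id})v\rangle$ and combining \eqref{semigroup2} with $\|(-A)^{\rho}D\phi_{\tau,\epsilon}\|\leq C\tau^{-\frac{\beta}{2}+\epsilon}$ from \eqref{phi5}, the total replacement error is $O(\tau^{\rho-\beta+2\epsilon})$, which vanishes on choosing $\rho\in(\beta-2\epsilon,1)$. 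Because $\tau^{1+\beta}=m^{-1}$, the main term equals $\Pi^N_{\tau}(f^N_{\tau,\epsilon})$ with $f^N_{\tau,\epsilon}(x):=\|Q^{\frac12}P^ND\phi_{\tau,\epsilon}(x)\|^2$, and Lemma \ref{lemmaphi}(i) shows $f^N_{\tau,\epsilon}\in\mathcal{C}^1_4$ with norm uniform in $\tau,\epsilon$. Splitting $\Pi^N_{\tau}(f^N_{\tau,\epsilon})-\pi(f)$ into a fluctuation and a bias part, I estimate the fluctuation in $L^2$: the geometric ergodicity of Proposition \ref{propfull} (in the $\mathcal{C}^1_q$ form of Remark \ref{rem1}) and the uniform moment bounds there yield $|\mathrm{Cov}(f^N_{\tau,\epsilon}(X^N_k),f^N_{\tau,\epsilon}(X^N_l))|\leq Ce^{-\frac12(\lambda_1-K)|k-l|\tau}$, so $\mathrm{Var}(\Pi^N_{\tau}(f^N_{\tau,\epsilon}))\leq C(m\tau)^{-1}=C\tau^{\beta}\to0$. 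For the bias, the same ergodicity gives $\mathbb{E}\Pi^N_{\tau}(f^N_{\tau,\epsilon})-\pi^N_{\tau}(f^N_{\tau,\epsilon})\to0$, Remark \ref{remark2} gives $\pi^N_{\tau}(f^N_{\tau,\epsilon})-\pi(f^N_{\tau,\epsilon})\to0$, and $\pi(f^N_{\tau,\epsilon})\to\pi(f)$ follows by Cauchy--Schwarz from $\int_H\|Q^{\frac12}(P^ND\phi_{\tau,\epsilon}-D\phi)\|^2\pi(\mathrm{d}x)\to0$, which combines Lemma \ref{lemmaDDphi}(ii) with $\|(P^N-\mathrm{Id})(-A)^{-\delta_1}\|_{\mathcal{L}(H)}=\lambda_{N+1}^{-\delta_1}\to0$ and Lemma \ref{lemmaphi}(ii).

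The main obstacle is precisely this convergence of the conditional variance, and in particular pinning its limit down to $\pi(f)=\pi(\|Q^{\frac12}D\phi(\cdot)\|^2)$ built from the genuine Poisson solution $\phi$ rather than to a $\tau$-dependent object built from $\phi_{\tau,\epsilon}$; this is where the asymptotic relationship of Lemma \ref{lemmaDDphi}(ii) is indispensable, and it is what forces the limiting variance to coincide with that of the continuous case. The delicate feature is that one must use the space-independent estimate \eqref{phi5} (to absorb both the semigroup-replacement error and the fourth moments) together with the space-dependent estimates of Lemma \ref{lemmaphi}(i) (to secure $\tau$-uniform $\mathcal{C}^1_4$ control and uniform moment bounds), so that all constants stay independent of $\tau$ and $\epsilon$ and the two asymptotic regimes balance.
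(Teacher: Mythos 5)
Your proposal is correct, and its overall architecture coincides with the paper's: the same martingale difference array, the same chain of approximations $\bar f^{N,i}_{\tau,\epsilon}\to f^N_{\tau,\epsilon}\to f_{\tau,\epsilon}\to f$ and $\pi^N_{\tau}\to\pi^N\to\pi$ for identifying the limiting variance, and the same decisive use of Lemma \ref{lemmaDDphi}(\romannumeral2) to land on $\pi(\|Q^{\frac12}D\phi(\cdot)\|^2)$ rather than a $\tau$-dependent quantity. The genuine difference is the version of the martingale CLT you invoke. The paper uses \cite[Theorem 2.3]{MDA74}, which asks for negligibility of $\mathbb{E}[\max_i|\tau^{\frac{\beta}{2}}Z_i|^2]$ and convergence in probability of the \emph{raw} sum of squares $\tau^{\beta}\sum_iZ_i^2$; consequently it must also control the fluctuation $Z_i^2-\pi^N_{\tau}(\bar f^{N,i}_{\tau,\epsilon})$, which it does inside the term $I_1$ via fourth moments and a conditional-expectation/ergodicity argument for the cross terms. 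You instead use the conditional-variance form (conditional Lindeberg plus $\sum_k\mathbb{E}[\xi_k^2\mid\mathcal{F}_{t_k}]\to\pi(f)$ in probability), and you exploit the fact that, conditionally on $\mathcal{F}_{t_k}$, each summand is an exactly Gaussian scalar: the It\^o isometry then gives the conditional variance in closed form and the Gaussian fourth-moment identity gives the Lyapunov bound. This buys you a cleaner reduction — you never need to compare $Z_i^2$ with its conditional mean — but the saving is modest, since the ergodic average $\Pi^N_{\tau}(f^N_{\tau,\epsilon})$ still requires exactly the same covariance decay from Remark \ref{rem1} and Proposition \ref{propfull} that the paper uses for $I_{1,2}$. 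Two small remarks: your Lyapunov bound via the space-independent estimate \eqref{phi5} gives $\sum_k\mathbb{E}\xi_k^4\leq C\tau^{1-\beta+4\epsilon}$, which is fine but slightly weaker than the $C\tau^{1+\beta}$ the paper gets from the space-dependent estimate \eqref{phi1} combined with the uniform moment bounds; and in the semigroup-replacement step your purely space-independent bound $O(\tau^{\rho-\beta+2\epsilon})$ with $\rho\in(\beta-2\epsilon,1)$ is valid, whereas the paper uses the space-dependent estimates and gets $O(\tau^{\frac{1+\beta}{2}-\gamma})$ without involving $\epsilon$ — either works, but yours makes the admissible range of $\epsilon$ enter a place where the paper keeps it out.
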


\begin{prop}\label{prop3}
Under conditions in Theorem \ref{main},
 for any $\epsilon\in (\max\{0,\frac{\alpha-1+\beta}{2}\},\frac{\beta}{2}),$
$$\mathcal{R}^N_{\tau}\overset{\mathbb{P}}{\longrightarrow} 0\text{ as }\tau\rightarrow 0.$$
\end{prop}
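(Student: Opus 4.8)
The plan is to treat the five pieces $\mathcal{R}^N_{\tau,i}$, $i=1,\dots,5$, of the decomposition separately and to show that each tends to $0$ in $L^1$ or $L^2$; convergence in probability then follows from Markov's (Chebyshev's) inequality. The whole bookkeeping is governed by the observation that, once the prefactor $\tau^{\beta/2}$ is combined with the $m=\tau^{-1-\beta}$ summands, every per-step contribution must be of size $o(\tau^{1+\beta/2})$. The two workhorses are the a priori and one-step regularity bounds of Proposition \ref{propfull} (in particular $\sup_k\mathbb{E}\|X^N_k\|^p_\beta<\infty$ and $\mathbb{E}\|X^{N,\tau}_t-X^{N,\tau}_s\|^2\le C(t-s)^\beta$, together with their higher-moment analogues) and the semigroup smoothing estimates \eqref{semigroup1}--\eqref{semigroup2}; the dichotomy between the space-dependent bounds of Lemma \ref{lemmaphi} and the space-independent bounds of Lemma \ref{lemmaDDphi} is what ultimately decides the admissible range of $\epsilon$.

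The boundary term $\mathcal{R}^N_{\tau,1}$ and the semigroup-defect drift term $\mathcal{R}^N_{\tau,2}$ are benign. For the former, $|\phi_{\tau,\epsilon}(x)|\le C(1+\|x\|^2)$ and the uniform second moments give $\mathbb{E}|\mathcal{R}^N_{\tau,1}|\le C\tau^{\beta/2}\to0$. For the latter, I would move a fractional power $(-A)^{\delta}$ onto $D\phi_{\tau,\epsilon}$, estimate $\|(-A)^{-\delta}(\mathrm{Id}-S^N(s))\|_{\mathcal{L}(H)}\le Cs^{\delta}$ via \eqref{semigroup2}, and use the space-dependent bound together with the linear growth of $F$; integrating $\int_0^\tau s^\delta\,\mathrm{d}s$ and summing yields $\mathbb{E}|\mathcal{R}^N_{\tau,2}|\le C\tau^{\delta-\beta/2}$, which tends to $0$ for any $\delta\in(\beta/2,1)$. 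Neither term constrains $\epsilon$.

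The real work lies in the It\^o-correction term $\mathcal{R}^N_{\tau,3}$, the third-order Taylor remainder $\mathcal{R}^N_{\tau,4}$, and the unbounded-operator term $\mathcal{R}^N_{\tau,5}$, for all of which a naive termwise triangle-inequality estimate diverges (for instance $\mathcal{R}^N_{\tau,4}$ would only give $\tau^{\beta-1}$). The remedy is to exploit cancellation. Writing the one-step increment as $X^N_{k+1}-X^N_k=(S^N(\tau)-\mathrm{Id})X^N_k+\tau S^N(\tau)F^N(X^N_k)+S^N(\tau)P^N\Delta W_k$, I would split each summand into its $\mathcal{F}_{t_k}$-conditional mean plus a conditionally centered part. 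The centered parts form martingale-difference sums, so their second moments are controlled by $\sum_k$ of the per-step variances (orthogonality), recovering the extra half-power that the triangle inequality loses; here the Gaussianity of $\Delta W_k$ is essential, and for $\mathcal{R}^N_{\tau,3}$ the conditional mean of the quadratic form reproduces $\tau\,\mathrm{Tr}(S^N(\tau)P^NQP^NS^N(\tau)D^2\phi_{\tau,\epsilon}(X^N_k))$, to be compared against $\tau\,\mathrm{Tr}(QD^2\phi_{\tau,\epsilon}(X^N_k))$. The residual trace $Q-S^N(\tau)P^NQP^NS^N(\tau)$ decomposes into a Galerkin part $Q-P^NQP^N$, contributing a factor $N^{-\theta}=\tau^{\alpha\theta}$, and a semigroup part contributing a factor $\tau^{\theta'}$. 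For the conditional-mean pieces involving the unbounded $A^N$ in $\mathcal{R}^N_{\tau,5}$, I would redistribute the derivative $(-A)$ by placing $(-A)^{\delta}$ on $D\phi_{\tau,\epsilon}$ and the complementary power on the $\dot{H}^\beta$-regular $X^N_k$, invoking \eqref{semigroup1}--\eqref{semigroup2}; since these pieces force $\delta$ close to $1$ and hence require the space-independent bound $\|(-A)^{\delta}D^j\phi_{\tau,\epsilon}\|\le C\tau^{-\beta/2+\epsilon}$, its blow-up $\tau^{-\beta/2+\epsilon}$ must be dominated by the gained powers of $\tau$ and of $N=\tau^{-\alpha}$.

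The main obstacle is precisely this simultaneous balancing. One must keep every resulting exponent positive after multiplication by $\tau^{\beta/2}$ and summation over $m=\tau^{-1-\beta}$ steps, while juggling three competing sources of size: the uniform moment bounds, the space-independent regularity with its $\tau^{-\beta/2+\epsilon}$ cost, and the Galerkin and semigroup smoothing with their $\tau^{\alpha\theta}$ and $s^{\delta}$ gains. Tracking the worst of these exponents and demanding positivity is what pins down the lower threshold $\epsilon>\max\{0,(\alpha-1+\beta)/2\}$, while the coupling $N=\lfloor\tau^{-\alpha}\rfloor$ with $\alpha\in(\frac14,\frac12)$ guarantees that this interval overlaps $(0,\frac{\beta}{2})$, so that an admissible $\epsilon$ exists. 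I expect the most delicate single estimate to be the conditional-mean part of $\mathcal{R}^N_{\tau,5}$, where the unbounded operator and the space-independent regularity collide and leave the least room.
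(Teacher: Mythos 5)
Your decomposition into the five pieces $\mathcal{R}^N_{\tau,i}$, the bookkeeping target, and the split of each summand into an $\mathcal{F}_{t_k}$-conditional mean plus a conditionally centered part all match the paper's proof. Your plan to control the centered parts by martingale orthogonality in $L^2$ is a legitimate alternative to what the paper actually does: the paper bounds \emph{exponential} moments of these martingale-difference sums via $\mathbb{P}(X>b)\le e^{-b}\mathbb{E}e^{X}$ and Lemma \ref{lemmaT} (for $\mathcal{R}^{N,4}_{\tau,3}$ it even needs an exponential martingale, Fernique's theorem and the hypothesis $\mathbb{E}e^{\vartheta\|X_0\|^2}<\infty$), and it is precisely the exponential-moment computation of $\mathcal{J}_1$ for $\beta>\tfrac12$ that produces the threshold $\epsilon>\tfrac{\alpha-1+\beta}{2}$. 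Your $L^2$ route sidesteps that machinery and, as far as I can check, closes for the centered terms using only the space-dependent bounds of Lemma \ref{lemmaphi}, with no constraint on $\epsilon$; so your claim that your exponent tracking ``pins down'' the stated lower threshold is not substantiated --- that is harmless for proving the proposition (a larger admissible range suffices), but it is not where the hypothesis comes from.

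The genuine gap is your treatment of the drift pieces of $\mathcal{R}^N_{\tau,5}$. You assert that because these force $\delta$ close to $1$ they ``require the space-independent bound'' $\|(-A)^{\delta}D\phi_{\tau,\epsilon}(x)\|\le C\tau^{-\beta/2+\epsilon}$. That is a misreading of the lemmas: estimate \eqref{phi1} already gives the \emph{space-dependent} bound $\|(-A)^{\delta_1}D\phi_{\tau,\epsilon}(x)\|\le C(1+\|x\|^2)$ for every $\delta_1<1$, uniformly in $\tau,\epsilon$, and this is what the paper uses for exactly these terms. If you instead pay the factor $\tau^{-\beta/2+\epsilon}$, the estimate does not close on the claimed range: for $\mathcal{R}^{N,1}_{\tau,5}=\tau^{\beta/2}\sum_k\big\langle\int_{t_k}^{t_{k+1}}A^N(S^N(t-t_k)-\mathrm{Id})X^N_k\,\mathrm{d}t,\,D\phi_{\tau,\epsilon}(X^N_k)\big\rangle$ the only available gains are $\tau^{\nu}$ from \eqref{semigroup2} and the inverse estimate $\|X^N_k\|_{\beta+2\nu}\le CN^{2\nu}\|X^N_k\|_{\beta}$, with $\nu\le 1-\tfrac{\beta}{2}$, giving a net exponent $\nu(1-2\alpha)-\tfrac{\beta}{2}+\epsilon-\gamma$; for $\beta=1$, $\alpha=0.45$ this forces $\epsilon>0.45$, whereas the proposition must hold for all $\epsilon>\tfrac{\alpha-1+\beta}{2}=0.225$. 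The fix is simply to use the space-dependent estimate with the polynomial weight together with the uniform moments of Proposition \ref{propfull}, which yields $\mathbb{E}|\mathcal{R}^{N,1}_{\tau,5}|\le C\tau^{\nu(1-2\alpha)-\gamma}\to 0$ with no constraint on $\epsilon$ at all.
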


\begin{prop}\label{prop4}
Under conditions in Theorem \ref{main}, for any $\epsilon\in (0,\alpha\beta)$,
\begin{align*}
\tilde{\mathcal{R}}^N_{\tau}\overset{\mathbb{P}}{\longrightarrow} 0\text{ as }\tau\rightarrow 0.
\end{align*}
\end{prop}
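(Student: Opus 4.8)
The plan is to split $\tilde{\mathcal{R}}^N_{\tau}=-\tau^{-\epsilon}\Psi^N_{\tau}(h)$ into a deterministic mean part and a mean-zero fluctuation part by centering $\phi_{\tau,\epsilon}$ at its average under the invariant measure $\pi^N_{\tau}$ of the full discretization:
\begin{align*}
\tau^{-\epsilon}\Psi^N_{\tau}(h)=\tau^{-\epsilon}\pi^N_{\tau}(\phi_{\tau,\epsilon})
+\tau^{-\epsilon}m^{-1}\sum_{k=0}^{m-1}\big[\phi_{\tau,\epsilon}(X^N_k)-\pi^N_{\tau}(\phi_{\tau,\epsilon})\big].
\end{align*}
The first summand is a deterministic number, for which I will prove convergence to $0$; the second is centered under $\pi^N_{\tau}$, and I will control it in $L^2$ and then invoke Ch\'ebyshev's inequality.

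For the mean part, the key observation is that $\pi(\phi_{\tau,\epsilon})=0$. Indeed, inserting the representation \eqref{phiinte}, applying Fubini's theorem (justified by the exponential decay in \eqref{exactmeas}), and using the invariance identity $\int_H\mathbb{E}h(X(t,x))\pi(\mathrm{d}x)=\pi(h)$ gives $\pi(\phi_{\tau,\epsilon})=-\int_0^\infty e^{-\tau^{\beta/2-\epsilon}t}[\pi(h)-\pi(h)]\mathrm{d}t=0$. By Lemma \ref{lemmaphi}(\romannumeral1), $\phi_{\tau,\epsilon}\in\mathcal{C}^1_2$ with a constant uniform in $\tau,\epsilon$, so the measure-approximation bounds of Remark \ref{remark2} apply with a uniform constant, whence
\begin{align*}
|\pi^N_{\tau}(\phi_{\tau,\epsilon})|\leq|\pi^N_{\tau}(\phi_{\tau,\epsilon})-\pi^N(\phi_{\tau,\epsilon})|+|\pi^N(\phi_{\tau,\epsilon})-\pi(\phi_{\tau,\epsilon})|\leq C\tau^{\frac{\beta}{2}}+CN^{-\beta}.
\end{align*}
Since $N=\lfloor\tau^{-\alpha}\rfloor$ and $\alpha<\frac12$ forces $\alpha\beta<\frac{\beta}{2}$, this is bounded by $C\tau^{\alpha\beta}$, so $\tau^{-\epsilon}|\pi^N_{\tau}(\phi_{\tau,\epsilon})|\leq C\tau^{\alpha\beta-\epsilon}\to0$ precisely when $\epsilon<\alpha\beta$, matching the hypothesis.

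For the fluctuation part, set $\tilde g:=\phi_{\tau,\epsilon}-\pi^N_{\tau}(\phi_{\tau,\epsilon})$, which satisfies $\pi^N_{\tau}(\tilde g)=0$ and lies in $\mathcal{C}^1_2$ with uniform constant. I expand the second moment $\mathbb{E}|m^{-1}\sum_k\tilde g(X^N_k)|^2=m^{-2}\sum_{k,l}\mathbb{E}[\tilde g(X^N_k)\tilde g(X^N_l)]$ and estimate the off-diagonal terms via the Markov property: for $k\le l$, conditioning on $\mathcal{F}_{t_k}$ and invoking the $\mathcal{C}^1_q$ mixing estimate of Remark \ref{rem1} (the version of \eqref{full1}) together with $\pi^N_{\tau}(\tilde g)=0$ yields $|\mathbb{E}[\tilde g(X^N_l)\mid\mathcal{F}_{t_k}]|\leq C(1+\|X^N_k\|^{3})e^{-\frac12(\lambda_1-K)(l-k)\tau}$. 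Using the uniform moment bounds of Proposition \ref{propfull} and the Cauchy--Schwarz inequality, each correlation is bounded by $Ce^{-c|k-l|\tau}$ with $c=\frac12(\lambda_1-K)$. Summing the geometric series over $l$ and then over $k$ gives $\mathbb{E}|m^{-1}\sum_k\tilde g(X^N_k)|^2\leq Cm^{-1}\tau^{-1}=C\tau^{\beta}$, where the last equality uses $m=\tau^{-1-\beta}$. Consequently the fluctuation part has second moment at most $C\tau^{\beta-2\epsilon}\to0$ whenever $\epsilon<\frac{\beta}{2}$, which is automatic since $\epsilon<\alpha\beta<\frac{\beta}{2}$; by Ch\'ebyshev's inequality it converges to $0$ in probability, and combining the two parts proves the claim.

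The step I expect to be most delicate is securing \emph{uniform-in-$(\tau,\epsilon)$} constants throughout: the whole argument hinges on $\phi_{\tau,\epsilon}$ belonging to $\mathcal{C}^1_2$ with a $\tau$-independent bound, which is exactly the content of the space-dependent regularity estimate \eqref{phi1} in Lemma \ref{lemmaphi}(\romannumeral1). Without this uniformity the estimate of $\pi^N_{\tau}(\phi_{\tau,\epsilon})$ would carry a factor that could cancel the gain $\tau^{\alpha\beta}$. The identity $\pi(\phi_{\tau,\epsilon})=0$ is the clean structural fact that renders the measure-approximation bounds of Remark \ref{remark2} directly applicable.
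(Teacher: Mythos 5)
Your proof is correct, but it takes a genuinely different route from the paper's. The paper centers the sum at $\pi^N(\phi_{\tau,\epsilon})$ and controls $\mathbb{E}\big|m^{-1}\sum_{k}\big(\phi_{\tau,\epsilon}(X^N_k)-\pi^N(\phi_{\tau,\epsilon})\big)\big|$ by introducing the auxiliary Poisson equation \eqref{GammaNtau} for the semi-discretization, applying It\^o's formula to $\Gamma^N_{\tau,\epsilon}(X^{N,\tau}_t)$, and estimating the four resulting error terms $\mathcal{I}_1,\ldots,\mathcal{I}_4$ (this is what Lemma \ref{lemmaphi}(\romannumeral3) is there for); it then appeals to Remark \ref{remark2} for $|\pi^N(\phi_{\tau,\epsilon})-\pi(\phi_{\tau,\epsilon})|\leq CN^{-\beta}$, arriving at $\mathbb{E}|\tilde{\mathcal{R}}^N_{\tau}|\leq C\tau^{-\epsilon}(N^{-\beta}+\tau^{\beta/2})$. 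You instead center at $\pi^N_{\tau}(\phi_{\tau,\epsilon})$ and replace the entire It\^o-formula machinery with a direct second-moment computation, bounding the correlations $\mathbb{E}[\tilde g(X^N_k)\tilde g(X^N_l)]$ via the Markov property and the $\mathcal{C}^1_q$ mixing estimate of Remark \ref{rem1} --- the same device the paper itself uses for the term $I_{1,2}$ in the proof of Proposition \ref{prop2} --- obtaining the identical final rate $C\tau^{-\epsilon}(N^{-\beta}+\tau^{\beta/2})$. Your key supporting observations all check out: $\pi(\phi_{\tau,\epsilon})=0$ by invariance and Fubini; the uniformity in $(\tau,\epsilon)$ of the $\mathcal{C}^1_2$ bound on $\phi_{\tau,\epsilon}$ from \eqref{phi1}, which makes the constants in Remark \ref{remark2} and Remark \ref{rem1} uniform; the geometric-series summation $m^{-2}\sum_{k,l}e^{-c|k-l|\tau}\leq Cm^{-1}\tau^{-1}=C\tau^{\beta}$; and the ordering $\epsilon<\alpha\beta<\beta/2$ that makes both pieces vanish. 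What your approach buys is economy --- it bypasses $\Gamma^N_{\tau,\epsilon}$ and its regularity estimates entirely; what the paper's approach buys is that the Poisson-equation/It\^o decomposition isolates exactly where each power of $\tau$ comes from (time discretization of the generator, H\"older continuity of $X^{N,\tau}_t$, trace corrections), which is more informative if one wanted to sharpen the rate.
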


With the above three propositions, we can give the proof of Theorem \ref{main}.
%\subsection{Proof of Theorem \ref{main}}
\begin{proof}[Proof of Theorem \ref{main}]
The proof is a combination of Propositions  \ref{prop2}, \ref{prop3} and \ref{prop4}. 
Since $\frac{\alpha-1+\beta}{2}<\frac{\frac{1}{2}-1+\beta}{2}\leq \frac{1}{4}\beta<\alpha\beta<\frac{\beta}{2}$, we can take $\epsilon\in(\max\{0,\frac{\alpha-1+\beta}{2}\},\alpha\beta)$
such that Propositions  \ref{prop2}, \ref{prop3} and \ref{prop4} hold simultaneously. Hence, applying Slutsky's theorem finishes
  the proof.\end{proof}

At the end of this subsection, we remark that based on Propositions \ref{prop3} and \ref{prop4} and the application of Ch\'ebyshev's inequality, the following weak LLNs can be obtained.
\begin{coro}\label{coroprob}
Under conditions in Theorem \ref{main},
%Then the sharp convergence order of $\big[m^{-1}\sum_{k=0}^mh(X^N_k)-\pi(h)\big]$ in the mean square sense is $\frac{\beta}{2}.$
 the weak LLNs holds, i.e., \begin{align*}
 \big[\Pi^N_{\tau}(h)-\pi(h)\big]\overset{\mathbb{P}}{\longrightarrow}0\text{ as }\tau\rightarrow0.
 \end{align*}
\end{coro}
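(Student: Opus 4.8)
The plan is to reuse the decomposition \eqref{decompose} directly. Multiplying both sides of \eqref{decompose} by $\tau^{\frac{\beta}{2}}$ gives
$$\Pi^N_{\tau}(h)-\pi(h)=\tau^{\frac{\beta}{2}}\mathcal{M}^N_{\tau}+\tau^{\frac{\beta}{2}}\mathcal{R}^N_{\tau}+\tau^{\frac{\beta}{2}}\tilde{\mathcal{R}}^N_{\tau},$$
so it suffices to show that each of the three terms on the right converges to $0$ in probability. Exactly as in the proof of Theorem \ref{main}, I would fix a single $\epsilon\in(\max\{0,\frac{\alpha-1+\beta}{2}\},\alpha\beta)$, which is a nonempty interval, so that Propositions \ref{prop3} and \ref{prop4} hold simultaneously for this common $\epsilon$.

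For the martingale term I would argue by Ch\'ebyshev's inequality, which only needs a uniform second-moment bound. Since the summands of $\tau^{\frac{\beta}{2}}\mathcal{M}^N_{\tau}$ form a martingale difference sequence with respect to $\{\mathcal{F}_{t_k}\}$ and $D\phi_{\tau,\epsilon}(X^N_k)$ is $\mathcal{F}_{t_k}$-measurable, conditioning on $\mathcal{F}_{t_k}$ and applying the It\^o isometry yields
$$\mathbb{E}\big|\tau^{\frac{\beta}{2}}\mathcal{M}^N_{\tau}\big|^2=\tau^{2\beta}\sum_{k=0}^{m-1}\mathbb{E}\int_{t_k}^{t_{k+1}}\big\|Q^{\frac{1}{2}}S^N(t-t_k)P^ND\phi_{\tau,\epsilon}(X^N_k)\big\|^2\mathrm{d}t.$$
Bounding $\|Q^{\frac{1}{2}}S^N(t-t_k)P^N\cdot\|\leq\|Q^{\frac{1}{2}}\|_{\mathcal{L}(H)}\|\cdot\|$, invoking the space-dependent estimate $\|D\phi_{\tau,\epsilon}(x)\|\leq C\|h\|_{1,\infty}(1+\|x\|^2)$ from \eqref{phi1}, and using the uniform moment bound $\sup_k\mathbb{E}\|X^N_k\|^4<\infty$ from Proposition \ref{propfull}, each summand is $O(\tau)$; with $m=\tau^{-1-\beta}$ this gives $\mathbb{E}|\tau^{\frac{\beta}{2}}\mathcal{M}^N_{\tau}|^2\leq C\tau^{2\beta}\cdot m\cdot\tau=C\tau^{\beta}\to 0$, whence $\tau^{\frac{\beta}{2}}\mathcal{M}^N_{\tau}\overset{\mathbb{P}}{\longrightarrow}0$ by Ch\'ebyshev. (Alternatively, $\mathcal{M}^N_{\tau}$ is tight by Proposition \ref{prop2}, so multiplying by the deterministic factor $\tau^{\frac{\beta}{2}}\to 0$ already forces convergence to $0$ in probability.)

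For the two remainder terms, Propositions \ref{prop3} and \ref{prop4} give $\mathcal{R}^N_{\tau}\overset{\mathbb{P}}{\longrightarrow}0$ and $\tilde{\mathcal{R}}^N_{\tau}\overset{\mathbb{P}}{\longrightarrow}0$; since $\tau^{\frac{\beta}{2}}\leq 1$ for small $\tau$, one has $|\tau^{\frac{\beta}{2}}\mathcal{R}^N_{\tau}|\leq|\mathcal{R}^N_{\tau}|$ and likewise for $\tilde{\mathcal{R}}^N_{\tau}$, so both scaled remainders converge to $0$ in probability as well. Adding the three contributions, each a term converging to $0$ in probability, completes the argument. The substantive work is entirely contained in Propositions \ref{prop3}--\ref{prop4}; the only genuinely new ingredient here is the elementary second-moment estimate for the martingale term, which is routine given the a priori moment bounds and the regularity of $\phi_{\tau,\epsilon}$, so I do not anticipate any real obstacle at this stage.
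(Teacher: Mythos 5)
Your proposal is correct and follows essentially the same route as the paper: it uses the decomposition \eqref{decompose}, a second-moment/Ch\'ebyshev argument for $\tau^{\frac{\beta}{2}}\mathcal{M}^N_{\tau}$, and Propositions \ref{prop3} and \ref{prop4} for the scaled remainders. The only cosmetic difference is that you bound the It\^o-isometry integrand via $\|Q^{\frac{1}{2}}\|_{\mathcal{L}(H)}$ (which is finite since $Q\in\mathcal{L}(H)$) rather than the paper's $(-A)^{\frac{1\pm\beta}{2}}$ splitting with $\|(-A)^{\frac{\beta-1}{2}}Q^{\frac{1}{2}}\|_{\mathcal{L}_2(H)}$; both yield the same $O(\tau^{\beta})$ bound.
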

\begin{proof}
The proof is based on the decomposition \eqref{decompose}.
 Since \begin{align*}
 &\mathbb{E}|\tau^{\frac{\beta}{2}}\mathcal{M}^N_{\tau}|^2\leq \tau^{2\beta}\mathbb{E}\Big| \int_0^{t_m}\big\langle (-A)^{\frac{1-\beta}{2}}D\phi_{\tau,\epsilon}(X^{N,\tau}_{\lfloor t\rfloor_{\tau}}),S^N(t-\lfloor t\rfloor_{\tau})P^N(-A)^{\frac{\beta-1}{2}}\mathrm{d}W(t)\big\rangle\Big|^2\\
\leq &~C\tau^{2\beta}\int_0^{t_m}\|(-A)^{\frac{\beta-1}{2}}Q^{\frac{1}{2}}\|^2_{\mathcal{L}_2(H)}\sup_{k\ge 0}\mathbb{E}\|(-A)^{\frac{1-\beta}{2}}D\phi_{\tau,\epsilon}(X^N_k)\|^2\mathrm{d}t\leq C\tau^{\beta}\rightarrow0\text{ as }\tau\rightarrow0,
\end{align*}
we deduce from Ch\'ebyshev's inequality that
$\mathbb{P}(|\tau^{\frac{\beta}{2}}\mathcal{M}^N_{\tau}|>a)\rightarrow0 \text{ as }\tau\rightarrow0.$
Moreover, by Proposition \ref{prop3}, we have
\begin{align*}
\mathbb{P}(|\tau^{\frac{\beta}{2}}\mathcal{R}^N_{\tau}|>a)=\mathbb{P}(|\mathcal{R}^N_{\tau}|>\tau^{-\frac{\beta}{2}}a)\rightarrow0 \text{ as }\tau\rightarrow0.
\end{align*}
Similarly, applying Proposition \ref{prop4} leads to $\mathbb{P}(|\tau^{\frac{\beta}{2}}\tilde{\mathcal{R}}^N_{\tau}|>a)\rightarrow0\text{ as }\tau\rightarrow0.$ The proof is finished by the additive property of the convergence in probability.
\end{proof}

  \section{Proofs of properties of Poisson-type equations}
\label{sec3}

In this section, 
we prove Lemmas \ref{lemmaphi} and \ref{lemmaDDphi} about properties of Poisson-type equations introduced in subsection \ref{poissontype}.
As a prerequisite of proving Lemma \ref{lemmaphi}, we first give estimates of $u(t,x):=\mathbb{E}h(X(t,x))$.

\begin{lemma}
For $l,k,y,s\in H,$ we have for $\delta_i\in[0,1)$ with $\sum_{i=1}^4\delta_i<1$ and $t>0,$
\begin{align}
&|Du(t,x)l|\leq C\|h\|_{1,\infty}e^{-Ct}\big(1+t^{-\delta_1}\big)(1+\|x\|^2)\|l\|_{-2\delta_1},\label{Dutge0}\\
&|D^2u(t,x)(l,k)|\leq C\|h\|_{2,\infty}e^{-Ct}\big(1+t^{-\sum_{i=1}^2\delta_i}\big)(1+\|x\|^2)\|l\|_{-2\delta_1}\|k\|_{-2\delta_2},\label{D2utge0}\\
&|D^3u(t,x)(l,k,y)|\leq C\|h\|_{3,\infty}e^{-Ct}\big(1+t^{-\sum_{i=1}^3\delta_i}\big)(1+\|x\|^2)\|l\|_{-2\delta_2}\|k\|_{-2\delta_3}\|y\|_{-2\delta_1},\label{ttge1}\\
&|D^4u(t,x)(l,k,y,s)|\leq C\|h\|_{4,\infty}e^{-Ct}\big(1+t^{-\sum_{i=1}^4\delta_i}\big)(1+\|x\|^2)\|l\|_{-2\delta_2}\|k\|_{-2\delta_3}\|y\|_{-2\delta_1}\|s\|_{-2\delta_4}.\label{D4utge0}
\end{align}
%\begin{align}\label{tleq1}
%|D^3u(t,x)(l,k,y)|\leq C\|h\|_{3,\infty}\|l\|_{-2\delta_2}\|k\|_{-2\delta_3}\|y\|_{-2\delta_1},\;0<t\leq 1.
%\end{align}
\end{lemma}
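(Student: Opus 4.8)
The plan is to represent the spatial derivatives of $u(t,x)=\mathbb{E}h(X(t,x))$ through the variation processes of the flow $x\mapsto X(t,x)$ and to estimate these processes by combining the analytic smoothing of $S(\cdot)$ with the dissipativity of the drift. Since the noise in \eqref{spde} is additive, differentiating in the initial datum annihilates the stochastic integral, so the variation processes solve pathwise linear evolution equations in $H$. Writing $\eta^l(t):=DX(t,x)l$, $\zeta^{l,k}(t):=D^2X(t,x)(l,k)$, and the third and fourth variations analogously, the chain rule gives
$$Du(t,x)l=\mathbb{E}\big[Dh(X(t,x))\eta^l(t)\big],$$
$$D^2u(t,x)(l,k)=\mathbb{E}\big[D^2h(X(t,x))(\eta^l(t),\eta^k(t))+Dh(X(t,x))\zeta^{l,k}(t)\big],$$
and similarly for $D^3u$ and $D^4u$, where each term pairs a derivative of $h$ of order $\le 4$ evaluated at $X(t,x)$ against a multilinear combination of variation processes. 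Because $h\in\mathcal{C}^4_b$, the factors $D^jh(X(t,x))$ are bounded by $\|h\|_{j,\infty}$, so the task reduces to estimating the variation processes in $H$ against negative-order norms of their inputs; the polynomial factor $(1+\|x\|^2)$ enters only through the moment bound $\sup_{t\ge0}\mathbb{E}\|X(t,x)\|^p\le C(1+\|x\|^p)$ of Proposition \ref{propexact} when Cauchy--Schwarz is applied to the multi-factor terms.

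The technical heart is a smoothing-plus-decay estimate for the linearized flow. Let $\Phi(t,s)$ be the solution operator of $\dot\eta=A\eta+DF(X(\cdot))\eta$, so that $\eta^l(t)=\Phi(t,0)l$. First I would establish the dissipative bound $\|\Phi(t,s)\|_{\mathcal{L}(H)}\le e^{-(\lambda_1-K)(t-s)}$ from the energy identity $\frac{\mathrm{d}}{\mathrm{d}t}\|\Phi(t,s)v\|^2=2\langle A\Phi(t,s)v,\Phi(t,s)v\rangle+2\langle DF(X(t))\Phi(t,s)v,\Phi(t,s)v\rangle$, using $\langle A\cdot,\cdot\rangle\le-\lambda_1\|\cdot\|^2$ and, after differentiating the one-sided Lipschitz condition of Assumption \ref{assumpF_1}, $\langle DF(X)\cdot,\cdot\rangle\le K\|\cdot\|^2$ with $K<\lambda_1$. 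I would then upgrade this to
$$\|\Phi(t,s)v\|\le C(t-s)^{-\delta}e^{-c(t-s)}\|v\|_{-2\delta},\qquad \delta\in[0,1),$$
from the mild formula $\Phi(t,s)v=S(t-s)v+\int_s^tS(t-r)DF(X(r))\Phi(r,s)v\,\mathrm{d}r$: the free term obeys $\|S(\tau)v\|=\|(-A)^{\delta}S(\tau)(-A)^{-\delta}v\|\le C\tau^{-\delta}e^{-\frac{\lambda_1}{2}\tau}\|v\|_{-2\delta}$ by \eqref{semigroup1}, the convolution is controlled using $\|DF\|\le L_F$, and a singular Gronwall argument on a short interval $[s,s+1]$ produces the $(t-s)^{-\delta}$ blow-up, while the dissipative bound handles $t-s\ge1$ and supplies the exponential decay. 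This step, with its singular Gronwall and the gluing of the short- and long-time regimes, is the main obstacle.

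With the flow estimate in hand, the first bound \eqref{Dutge0} is immediate: pathwise $\|\eta^l(t)\|\le C(1+t^{-\delta_1})e^{-ct}\|l\|_{-2\delta_1}$, whence $|Du(t,x)l|\le\|h\|_{1,\infty}\,\mathbb{E}\|\eta^l(t)\|$. For the higher derivatives I would represent the variation processes by Duhamel's formula, e.g.
$$\zeta^{l,k}(t)=\int_0^t\Phi(t,r)\,D^2F(X(r))(\eta^l(r),\eta^k(r))\,\mathrm{d}r,$$
and bound the integrand by moving a factor $(-A)^{-\sigma}$ onto $D^2F$ via Assumption \ref{assumpF_3} \eqref{D2F}, so that $\|\Phi(t,r)D^2F(X(r))(\eta^l,\eta^k)\|\le C(t-r)^{-\sigma}e^{-c(t-r)}\|\eta^l(r)\|\|\eta^k(r)\|$; the analogous Duhamel representations for the third and fourth variations use \eqref{D3F}, \eqref{D4F} together with the lower-order variations.

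Inserting the flow estimates and using the elementary inequality $\prod_i(1+r^{-\delta_i})\le C(1+r^{-\sum_i\delta_i})$, the remaining work is to check that each Duhamel convolution, a Beta-type integral with kernel $(t-r)^{-\sigma}r^{-\sum_i\delta_i}$ times exponentials, converges precisely when $\sigma<1$ and $\sum_i\delta_i<1$ and reproduces the factor $e^{-ct}(1+t^{-\sum_i\delta_i})$. Distributing the smoothing orders $\delta_i$ over the arguments exactly as in \eqref{D2utge0}--\eqref{D4utge0} and taking expectations (with Proposition \ref{propexact} controlling any moment that arises) then yields all four estimates. I expect the bookkeeping of these nested convolutions---making the singularities at $r=0$ and $r=t$ combine to the claimed exponent while retaining exponential decay---to be the only other delicate point.
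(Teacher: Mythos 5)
Your proposal is correct under the paper's hypotheses, but it reaches the long-time decay by a genuinely different mechanism than the paper does. You extract the factor $e^{-Ct}$ from the pathwise exponential contraction of the linearized flow, $\|\Phi(t,s)\|_{\mathcal{L}(H)}\le e^{-(\lambda_1-K)(t-s)}$, which is legitimate here because Assumption \ref{assumpF_1} gives $\langle DF(x)v,v\rangle\le K\|v\|^2$ with $K<\lambda_1$; combined with the short-time smoothing $\|\Phi(t,s)v\|\le C(t-s)^{-\delta}\|v\|_{-2\delta}$ (your singular-Gronwall step, which is exactly the paper's \eqref{u2}) and the Duhamel representations of the higher variations under Assumption \ref{assumpF_3}, all four bounds follow by the chain rule and Beta-integral bookkeeping, and in fact you obtain them \emph{without} the factor $(1+\|x\|^2)$, since every variation-process estimate is deterministic (your remark that this factor enters via Cauchy--Schwarz and moment bounds is unnecessary in your own scheme, though harmless). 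The paper instead only uses the variation-process estimates on $0<t\le 1$; for $t\ge 1$ it writes $u(t,x)=\mathbb{E}u(t-1,X(1,x))$, applies the Bismut--Elworthy--Li formula to $\Phi_t:=u(t-1,\cdot)-\pi(h)$, and imports the decay from the mixing estimate \eqref{exactmeas}, which is where its $(1+\|x\|^2)$ comes from. What each route buys: yours is more elementary and avoids the nondegeneracy hypotheses needed for Bismut--Elworthy--Li (condition (\romannumeral2) of Assumption \ref{assump_Q} and $\|Q^{-\frac{1}{2}}(-A)^{-\frac{1}{2}}\|_{\mathcal{L}(H)}<\infty$), whereas the paper's route yields the strictly stronger bound \eqref{tge1} for $t\ge 1$ in which only $\|h\|_{\infty}$ (not $\|h\|_{3,\infty}$) appears, and it survives in settings where the drift is not globally contractive so that the decay must come from ergodicity rather than from the flow. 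For the lemma as stated, with $\|h\|_{k,\infty}$ on the right-hand side, your argument suffices.
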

\begin{proof}
For the proof of \eqref{Dutge0} and \eqref{D2utge0}, we refer to \cite[Propositions $5.1, 5.2$]{B14} for the details. We only show the proof \eqref{ttge1} since \eqref{D4utge0} can be proved similarly. 
By the chain's rule and \cite[(21)]{B14}, we get for $l,k,y\in H,$
\begin{align}
D^3u(t,x)(l,k,y)&=\mathbb{E}\Big[D^3h(X(t,x))(\eta^{l,x}(t),\eta^{k,x}(t),\eta^{y,x}(t))+D^2h(X(t,x))(\zeta^{l,y,x}(t),\eta^{k,x}(t))\nonumber\\
&\quad\quad+D^2h(X(t,x))(\eta^{l,x}(t),\zeta^{k,y,x}(t))+D^2h(X(t,x))(\zeta^{l,k,x}(t),\eta^{y,x}(t))\nonumber\\
&\quad\quad+Dh(X(t,x))\xi^{l,k,y,x}(t)\Big],\;t>0,\;x\in H,\label{u1}
\end{align}
where $\eta^{l,x}$ is the solution of
\begin{align*}
\frac{\mathrm{d}\eta^{l,x}(t)}{\mathrm{d}t}=A\eta^{l,x}(t)+DF(X(t,x))\eta^{l,x}(t),\quad \eta^{l,x}(0)=l,
\end{align*}
 $\zeta^{l,k,x}$ is the solution of
\begin{align*}
\frac{\mathrm{d}\zeta^{l,k,x}(t)}{\mathrm{d}t}=A\zeta^{l,k,x}(t)+DF(X(t,x))\zeta^{l,k,x}(t)+D^2F(X(t,x))(\eta^{l,x}(t),\eta^{k,x}(t)),\quad \zeta^{l,k,x}(0)=0,
\end{align*}
and
$\xi^{l,k,y,x}$ is the solution of
\begin{align*}
\frac{\mathrm{d}\xi^{l,k,y,x}(t)}{\mathrm{d}t}&=A\xi^{l,k,y,x}(t)+DF(X(t,x))\xi^{l,k,y,x}(t)+D^2F(X(t,x))(\zeta^{l,k,x}(t),\eta^{y,x}(t))\\
&\quad+D^2F(X(t,x))(\zeta^{l,y,x}(t),\eta^{k,x}(t))+D^2F(X(t,x))(\eta^{l,x}(t),\zeta^{k,y,x}(t))\\
&\quad+D^3F(X(t,x))(\eta^{l,x}(t),\eta^{k,x}(t),\eta^{y,x}(t)),\quad
 \xi^{l,k,y,x}(0)=0.
\end{align*}
In the following, we split the proof into three steps.

\textit{Step $1$:}
We are going to show that for $0<t\leq 1$,
\begin{align}
&\|\xi^{l,k,y,x}(t)\|\leq C(\delta_1,\delta_2,\delta_3)t^{-\sigma}\|l\|_{-2\delta_2}\|k\|_{-2\delta_3}\|y\|_{-2\delta_1},\label{xi1}\\
&\|\xi^{l,k,y,x}(t)\|\leq C(\delta_1,\delta_2,\delta_3)t^{-(\delta_1+\delta_2+\delta_3)}\|l\|_{-2\delta_2}\|k\|_{-2\delta_3}\|y\|_{-2\delta_1},\label{xi2}
\end{align}
where $\sigma$ is defined in Assumption \ref{assumpF_3}.

The similar argument as the proof of \cite[Eq. (24)]{B14} can lead to that for $0<t\leq 1,$
\begin{equation}
\begin{split}\label{u2}
&\|\eta^{l,x}(t)\|\leq Ct^{-\delta_1}\|l\|_{-2\delta_1},\;\delta_1<1,\\
&\|\zeta^{l,k,x}(t)\|\leq Ct^{-(\delta_1+\delta_2)}\|l\|_{-2\delta_1}\|k\|_{-2\delta_2},\;\delta_1+\delta_2<1.
\end{split}
\end{equation}
In fact, by \eqref{semigroup1}, \eqref{D2F} and Assumption \ref{assumpF_1},
\begin{align*}
&\|\zeta^{l,k,x}(t)\|\leq L_F\int_0^t\|\zeta^{l,k,x}(r)\|\mathrm{d}r+\int_0^tC(t-r)^{-\sigma}\|\eta^{l,x}(r)\|\|\eta^{k,x}(r)\|\mathrm{d}r\\
\leq &~L_F\int_0^t\|\zeta^{l,k,x}(r)\|\mathrm{d}r+\int_0^1Ct^{1-\sigma-(\delta_1+\delta_2)}(1-\theta)^{-\sigma}\theta^{-(\delta_1+\delta_2)}\mathrm{d}\theta\|l\|_{-2\delta_1}\|k\|_{-2\delta_2}.
\end{align*}
Since $t^{1-\sigma-(\delta_1+\delta_2)}\leq t^{-(\delta_1+\delta_2)}$ for $0<t\leq 1,$ by applying Gr{\"o}nwall's inequality, we have $\|\zeta^{l,k,x}(t)\|\leq Ct^{-(\delta_1+\delta_2)}\|l\|_{-2\delta_1}\|k\|_{-2\delta_2}.$
Eq. \eqref{u2}, together with Assumption \ref{assumpF_1}, \eqref{D2F} and \eqref{D3F} yields that for $0<t\leq 1,$
\begin{align*}
&\|\xi^{l,k,y,x}(t)\|\\
\leq&~ L_F\int_0^t\|\xi^{l,k,y,x}(r)\|\mathrm{d}r+C\int_0^t(t-r)^{-\sigma}\big(\|\eta^{y,x}(r)\|\|\zeta^{l,k,x}(r)\|+\|\zeta^{l,y,x}(r)\|\|\eta^{k,x}(r)\|\\
&+\|\eta^{l,x}(r)\|\|\zeta^{k,y,x}(r)\|\big)\mathrm{d}r+C\int_0^t(t-r)^{-\sigma}\|\eta^{l,x}(r)\|\|\eta^{k,x}(r)\|\|\eta^{y,x}(r)\|\mathrm{d}r\\
\leq&~ L_F\int_0^t\|\xi^{l,k,y,x}(r)\|\mathrm{d}r+C\int_0^t(t-r)^{-\sigma}r^{-(\delta_1+\delta_2+\delta_3)}\|l\|_{-2\delta_2}\|k\|_{-2\delta_3}\|y\|_{-2\delta_1}\mathrm{d}r\\
\leq&~L_F\int_0^t\|\xi^{l,k,y,x}(r)\|\mathrm{d}r+C\int_0^1t^{1-\sigma-(\delta_1+\delta_2+\delta_3)}(1-\theta)^{-\sigma}\theta^{-(\delta_1+\delta_2+\delta_3)}\mathrm{d}\theta\|l\|_{-2\delta_2}\|k\|_{-2\delta_3}\|y\|_{-2\delta_1}.
\end{align*}
 Note that $t^{1-\sigma-(\delta_1+\delta_2+\delta_3)}\leq t^{-\sigma}$ and $t^{1-\sigma-(\delta_1+\delta_2+\delta_3)}\leq t^{-(\delta_1+\delta_2+\delta_3)}$ for $0<t\leq 1.$ Therefore, applying Gr{\"o}nwall's inequality leads to \eqref{xi1} and \eqref{xi2}, respectively.

\textit{Step $2$:} We are going to verify 
\begin{align}\label{step2}
|D^3u(t,x)(l,k,y)|\leq C\|h\|_{\infty}e^{-C(t-1)}(1+\|x\|^2)\|l\|\|k\|\|y\|,\;t\ge 1.
\end{align}
The Bismut--Elworthy–Li formula (see e.g. \cite[Proposition $4.4.3$]{pde01}) states that for any $\Phi\in\mathcal{C}^3(H;\mathbb{R})$ with $|\Phi(x)|\leq M(\Phi)(1+\|x\|^2),M(\Phi)>0,$ we can calculate the derivatives of $v(s,x):=\mathbb{E}\Phi(X(s,x))$ with $s>0$ up to order $3$ with respect to $x$.
The first derivative can be represented as
$$
D v(s,x) l=\frac{1}{s} \mathbb{E}\int_{0}^{s}\langle Q^{-\frac{1}{2}} \eta^{l,x}(r), \mathrm{d}  \tilde{W}(r)\rangle \Phi(X(s, x)),
$$
where $\tilde{W}=\sum_{j=1}^{\infty}e_j\beta_j$ is the cylindrical Wiener process.
 The Markov property leads to $v(s,x)=\mathbb{E}v(\frac{s}{2},X(\frac{s}{2},x))$, which can imply
\begin{align}\label{dv}
D v(s,x)l=\frac{2}{s} \mathbb{E}\int_{0}^{\frac{s}{2}}\langle Q^{-\frac{1}{2}} \eta^{l,x}(r), \mathrm{d} \tilde{W}(r)\rangle v\Big(\frac{s}{2}, X\big(\frac{s}{2}, x\big)\Big).
\end{align}
Similarly, we get formulas of the second and third derivatives as follows:
\begin{align}\label{dv2}
D^{2} v(s,x) (l, k)=&~ \frac{2}{s} \mathbb{E}\int_{0}^{\frac{s}{2}}\langle Q^{-\frac{1}{2}} \zeta^{l, k,x}(r), \mathrm{d}  \tilde{W}(r)\rangle v\Big(\frac{s}{2}, X\big(\frac{s}{2}, x\big)\Big) \notag\\
&+\frac{2}{s} \mathbb{E}\int_{0}^{\frac{s}{2}}\langle Q^{-\frac{1}{2}} \eta^{l,x}(r), \mathrm{d}  \tilde{W}(r)\rangle D v\Big(\frac{s}{2}, X\big(\frac{s}{2}, x\big)\Big) \eta^{k,x}\Big(\frac{s}{2}\Big),
\end{align}
%Let $v(t,x):=\mathbb{E}\Phi(X(t,x))$ with $\Phi\in\mathcal{C}^4(H;\mathbb{R})$ and $|\Phi(x)|\leq M(\Phi)(1+\|x\|^2),M(\Phi)>0.$ Using the Bismut--Elworthy–Li formula in \cite[Proposition $4.4.3$]{pde01}, we have
and
\begin{align}\label{bdg1}
D^3v(s,x)(l,k,y)=&~\frac{2}{s}\mathbb{E}\int_0^{\frac{s}{2}}\langle Q^{-\frac{1}{2}}\xi^{l,k,y,x}(r),\mathrm{d}\tilde{W}(r)\rangle v\Big(\frac{s}{2},X\big(\frac{s}{2},x\big)\Big)\nonumber\\
&+\frac{2}{s}\mathbb{E}\int_0^{\frac{s}{2}}\langle Q^{-\frac{1}{2}}\zeta^{l,k,x}(r),\mathrm{d}\tilde{W}(r)\rangle Dv\Big(\frac{s}{2},X\big(\frac{s}{2},x\big)\Big)\eta^{y,x}\Big(\frac{s}{2}\Big)\nonumber\\
&+\frac{2}{s}\mathbb{E}\int_0^{\frac{s}{2}}\langle Q^{-\frac{1}{2}}\zeta^{l,y,x}(r),\mathrm{d}\tilde{W}(r)\rangle Dv\Big(\frac{s}{2},X\big(\frac{s}{2},x\big)\Big)\eta^{k,x}\Big(\frac{s}{2}\Big)\nonumber\\
&+\frac{2}{s}\mathbb{E}\int_0^{\frac{s}{2}}\langle Q^{-\frac{1}{2}}\eta^{l,x}(r),\mathrm{d}\tilde{W}(r)\rangle D^2v\Big(\frac{s}{2},X\big(\frac{s}{2},x\big)\Big)\Big(\eta^{k,x}\Big(\frac{s}{2}\Big),\eta^{y,x}\Big(\frac{s}{2}\Big)\Big)\nonumber\\
&+\frac{2}{s}\mathbb{E}\int_0^{\frac{s}{2}}\langle Q^{-\frac{1}{2}}\eta^{l,x}(r),\mathrm{d}\tilde{W}(r)\rangle Dv\Big(\frac{s}{2},X\big(\frac{s}{2},x\big)\Big)\zeta^{k,y,x}\Big(\frac{s}{2}\Big).
\end{align}

Similar arguments as those in \cite[Lemma $6$]{CHS21} yield that for $0<s\leq 1,$
\begin{align*}
&\int_0^s\|(-A)^{\frac{1}{2}}\eta^{l,x}(r)\|^2\mathrm{d}r\leq C\|l\|^2,\quad \int_0^s\|(-A)^{\frac{1}{2}}\zeta^{l,k,x}(r)\|^2\mathrm{d}r\leq C\|l\|^2\|k\|^2,\\
&\int_0^s\|(-A)^{\frac{1}{2}}\xi^{l,k,y,x}(r)\|^2\mathrm{d}r\leq C\|l\|^2\|k\|^2\|y\|^2.
\end{align*}
With this in hand, applying the Burkholder--Davis--Gundy inequality to \eqref{dv}, we get for $0<s\le1,$
\begin{align*}
|Dv(s,x)l|&\leq \frac{2}{s}\Big(\mathbb{E}\Big|\int_0^{\frac{s}{2}}\langle Q^{-\frac{1}{2}}(-A)^{-\frac{1}{2}}(-A)^{\frac{1}{2}}\eta^{l,x}(r),\mathrm{d}\tilde{W}(r)\rangle\Big|^2\Big)^{\frac{1}{2}}\Big(\mathbb{E}\Big|v\Big(\frac{s}{2},X\big(\frac{s}{2},x\big)\Big)\Big|^2\Big)^{\frac{1}{2}}\\
&\leq \frac{C}{s}\Big(\mathbb{E}\int_0^{\frac{s}{2}}\|Q^{-\frac{1}{2}}(-A)^{-\frac{1}{2}}\|_{\mathcal{L}(H)}^2\|(-A)^{\frac{1}{2}}\eta^{l,x}(r)\|^2\mathrm{d}r\Big)^{\frac{1}{2}} M(\Phi)(1+\|x\|^2)\\
&\leq Cs^{-1}M(\Phi)(1+\|x\|^2)\|l\|
\end{align*}
under the assumption $\|Q^{-\frac{1}{2}}(-A)^{-\frac{1}{2}}\|_{\mathcal{L}(H)}<\infty$.
Similarly, applying the Burkholder--Davis--Gundy inequality again to 
 \eqref{dv2} and \eqref{bdg1} leads to that for $0<s\le 1,$
\begin{align}
&|D^2v(s,x)(l,k)|\leq Cs^{-2}M(\Phi)(1+\|x\|^2)\|l\|\|k\|,\notag\\
&|D^3v(s,x)(l,k,y)|\leq Cs^{-3}M(\Phi)(1+\|x\|^2)\|l\|\|k\|\|y\|.\label{d3phi}
\end{align}

The Markov property yields $u(t,x)=\mathbb{E}u(t-1,X(1,x))$. Note that Proposition \ref{propexact} implies
 $|u(t-1,x)-\pi(h)|\leq C\|h\|_{\infty}e^{-C(t-1)}(1+\|x\|^2)$ for $t\ge 1.$ Hence for $t\ge 1,$ we choose
$\Phi_t(x)=u(t-1,x)-\pi(h),$ then
$u(t,x)=\mathbb{E}\Phi_t(X(1,x))+\pi(h)$ with $M(\Phi_t)\leq C\|h\|_{\infty}e^{-C(t-1)}.$ Then, combining \eqref{d3phi} with $s=1$, we get \eqref{step2}.

\textit{Step $3$:} We are going to finish the proof of \eqref{ttge1}.
Combining $u(t,x)=\mathbb{E}u(t-1,X(1,x))$ with the chain's rule, we obtain
\begin{align}
&D^3u(t,x)(l,k,y)=\mathbb{E}[D^3[u(t-1,X(1,x))](l,k,y)]\nonumber\\
=&~\mathbb{E}\Big[D^3u(t-1,X(1,x))(\eta^{l,x}(1),\eta^{k,x}(1),\eta^{y,x}(1))+D^2u(t-1,X(1,x))(\zeta^{l,y,x}(1),\eta^{k,x}(1))\nonumber\\&
\quad+D^2u(t-1,X(1,x))(\eta^{l,x}(1),\zeta^{k,y,x}(1))
+D^2u(t-1,X(1,x))(\zeta^{l,k,x}(1),\eta^{y,x}(1))\nonumber\\
&\quad+Du(t-1,X(1,x))\xi^{l,k,y,x}(1)\Big].\label{d3u2}
\end{align}
Inequalities \eqref{Dutge0}, \eqref{D2utge0} and \eqref{step2} imply that
\begin{align*}
&\mathbb{E}|Du(t-1,X(1,x))\xi^{l,k,y,x}(1)|\leq C\|h\|_{\infty}e^{-C(t-1)}\mathbb{E}\big[(1+\|X(1,x)\|^2)\|\xi^{l,k,y,x}(1)\|\big],\\
&\mathbb{E}|D^2u(t-1,X(1,x))(\eta^{l,x}(1),\zeta^{k,y,x}(1))|
\leq C\|h\|_{\infty}e^{-C(t-1)}\mathbb{E}\big[(1+\|X(1,x)\|^2)\|\eta^{l,x}(1)\|\|\zeta^{k,y,x}(1)\|\big]
\end{align*}
and
\begin{align*}
&~\mathbb{E}|D^3u(t-1,X(1,x))(\eta^{l,x}(1),\eta^{k,x}(1),\eta^{y,x}(1))|\\
\leq &~C\|h\|_{\infty}e^{-C(t-1)}\mathbb{E}\big[(1+\|X(1,x)\|^2)\|\eta^{l,x}(1)\|\|\eta^{k,x}(1)\|\|\eta^{y,x}(1)\|\big].
\end{align*}
 Combining \eqref{xi2} with \eqref{u2}, we get for $t\ge 1,$
\begin{align*}
&\mathbb{E}|Du(t-1,X(1,x))\xi^{l,k,y,x}(1)|
\leq C\|h\|_{\infty}e^{-C(t-1)}(1+\|x\|^2)\|l\|_{-2\delta_2}\|k\|_{-2\delta_3}\|y\|_{-2\delta_1},\\
&\mathbb{E}|D^2u(t-1,X(1,x))(\eta^{l,x}(1),\zeta^{k,y,x}(1))|
\leq C\|h\|_{\infty}e^{-C(t-1)}(1+\|x\|^2)\|l\|_{-2\delta_2}\|k\|_{-2\delta_3}\|y\|_{-2\delta_1}
\end{align*}
and
\begin{align*}
&~\mathbb{E}|D^3u(t-1,X(1,x))(\eta^{l,x}(1),\eta^{k,x}(1),\eta^{y,x}(1))|\\
\leq &~C\|h\|_{\infty}e^{-C(t-1)}(1+\|x\|^2)\|l\|_{-2\delta_2}\|k\|_{-2\delta_3}\|y\|_{-2\delta_1}.
\end{align*}
The other terms of \eqref{d3u2} can be estimated similarly.
Hence, we obtain
\begin{align}\label{tge1}
|D^3u(t,x)(l,k,y)|\leq C\|h\|_{\infty}e^{-C(t-1)}(1+\|x\|^2)\|l\|_{-2\delta_2}\|k\|_{-2\delta_3}\|y\|_{-2\delta_1},\;t\ge 1.
\end{align}
Moreover, by $\eqref{u1}$ for $0<t\leq 1$, \eqref{xi2} and \eqref{d3u2}, we have for $\sum_{i=1}^3\delta_i<1,$
\begin{align}\label{tleq1}
|D^3u(t,x)(l,k,y)|\leq C\|h\|_{3,\infty}t^{-(\delta_1+\delta_2+\delta_3)}\|l\|_{-2\delta_2}\|k\|_{-2\delta_3}\|y\|_{-2\delta_1},\;0<t\leq 1.
\end{align}
The proof is completed.
\end{proof}

Now we show the proof of Lemma \ref{lemmaphi}. 
We only show the detailed proof for regularity estimates of $\phi_{\tau,\epsilon}$ and give the essential discussions for proofs of $\phi$ and $\Gamma^N_{\tau,\epsilon}.$
\begin{proof}[Proof of Lemma \ref{lemmaphi}.]
(\romannumeral1)
 \eqref{phi1} and \eqref{phi2} can be proved by applying 
  \cite[(16)]{B14}, \eqref{Dutge0} and \eqref{D2utge0}.
To be specific, by \eqref{exactmeas}, we get
\begin{align*}
|\phi_{\tau,\epsilon}(x)|\leq C\|h\|_{\infty}(1+\|x\|^2)\int_0^{\infty}e^{-\tau^{\frac{\beta}{2}-\epsilon}t}e^{-Ct}\mathrm{d}t\leq C\|h\|_{\infty}(1+\|x\|^2).
\end{align*}
By \eqref{Dutge0}, we obtain for $l\in H,$
\begin{align*}
&|(-A)^{\delta_1}D\phi_{\tau,\epsilon}(x)l|\leq \int_0^{\infty}|(-A)^{\delta_1}Du(t,x)l|\mathrm{d}t\\
\leq&~ \int_0^{\infty}C\|h\|_{1,\infty}(1+\|x\|^2)(1+t^{-\delta_1})e^{-Ct}\|l\|\mathrm{d}t
\leq C\|h\|_{1,\infty}(1+\|x\|^2)\|l\|.
\end{align*}
By \eqref{D2utge0}, we obtain for $l,k\in H,$
\begin{align*}
&|\langle(-A)^{\delta_1}D^2\phi_{\tau,\epsilon}(-A)^{\delta_2}l,k\rangle|\\
\leq&\int_0^{\infty}|\langle(-A)^{\delta_1}D^2u(t,x)(-A)^{\delta_2}l,k\rangle|\mathrm{d}t\\
\leq& \int_0^{\infty}C\|h\|_{2,\infty}(1+\|x\|^2)(1+t^{-(\delta_1+\delta_2)})e^{-Ct}\|l\|\|k\|\mathrm{d}t\\
\leq &~C\|h\|_{2,\infty}(1+\|x\|^2)\|l\|\|k\|.
\end{align*}

Combining \eqref{ttge1}, \eqref{phi3} can be obtained by
\begin{align*}
&\big|\big\langle(-A)^{\delta_1}D^3\phi_{\tau,\epsilon}(x)((-A)^{\delta_2}l,(-A)^{\delta_3}k),y\big\rangle\big|\\
\leq&~\int_0^{\infty}\big|e^{-\tau^{\frac{\beta}{2}-\epsilon}t}\big\langle(-A)^{\delta_1}D^3u(t,x)((-A)^{\delta_2}l,(-A)^{\delta_3}k),y\big\rangle\big|\mathrm{d}t\\
\leq &~C\|h\|_{3,\infty}(1+\|x\|^2)\|l\|\|k\|\|y\|
\end{align*}
and taking supremum with respect to $\|l\|,\|k\|,\|y\|\leq 1.$ The proof of \eqref{phi3} is finished.
The proof of \eqref{phi4} is similar to that of \eqref{phi3} based on \eqref{D4utge0}. We omit it.

(\romannumeral 2)
The proof of (\romannumeral 2) is similar to that of (\romannumeral1). We omit it. 

(\romannumeral 3)
The proof of (\romannumeral 3) is similar to that of (\romannumeral 1),
 the difference lies in the nonhomogeneous source terms of \eqref{poisson2} and \eqref{GammaNtau}. In (\romannumeral 3), the nonhomogeneous source term $\phi_{\tau,\epsilon}$ of \eqref{GammaNtau} satisfies $$\max\{\|D\phi_{\tau,\epsilon}(x)\|,\|D^2\phi_{\tau,\epsilon}(x)\|_{\mathcal{L}(H)}\}\leq C(1+\|x\|^2),$$
 while in (\romannumeral1) $h\in\mathcal{C}^4_b.$
 Noting Remark \ref{rem1},
the results here can be proved similarly as those in (\romannumeral1).

The proof is completed.
\end{proof}

We are in a position to give the proof of Lemma \ref{lemmaDDphi}, which is devoted to presenting some space-independent regularity estimates of the modified Poisson's equation \eqref{poisson} and the asymptotic relationship between $\phi$ and the solution 
$\phi_{\tau,\epsilon}$ of the modified Poisson's equation \eqref{poisson}.

\begin{proof}[Proof of Lemma \ref{lemmaDDphi}.]
(\romannumeral1)
We only show the proof of \eqref{phi7} since \eqref{phi5} and \eqref{phi6} can be proved similarly. 

For $t\ge1,$ combining \eqref{d3u2} and the fact that $u\in\mathcal{C}^4_b$ (see e.g.  \cite[Proposition $4.4.1$]{pde01}), we get
\begin{align*}
|D^3u(t,x)(l,k,y)|\leq &~C\mathbb{E}\big(\|\eta^{l,x}(1)\|\|\eta^{k,x}(1)\|\|\eta^{y,x}(1)\|+\|\zeta^{l,y,x}(1)\|\|\eta^{k,x}(1)\|\\
&+\|\eta^{l,x}(1)\|\|\zeta^{k,y,x}(1)\|+\|\zeta^{l,k,x}(1)\|\|\eta^{y,x}(1)\|+\|\xi^{l,k,y,x}(1)\|\big)\\
\leq&~ C\|l\|_{-2\delta_2}\|k\|_{-2\delta_3}\|y\|_{-2\delta_1},
\end{align*}
where in the last step we use \eqref{xi2} and \eqref{u2}.
 This, together with \eqref{tleq1} implies
\begin{align*}
&~|D^3\phi_{\tau,\epsilon}(x)(l,k,y)|=\Big|\int_0^{\infty}e^{-\tau^{\frac{\beta}{2}-\epsilon}t}D^3u(t,x)(l,k,y)\mathrm{d}t\Big|\\
\leq&~ C\Big(\int_0^1e^{-\tau^{\frac{\beta}{2}-\epsilon}t}t^{-(\delta_1+\delta_2+\delta_3)}\mathrm{d}t+\int_1^{\infty}e^{-\tau^{\frac{\beta}{2}-\epsilon}t}\mathrm{d}t\Big)
\|l\|_{-2\delta_2}\|k\|_{-2\delta_3}\|y\|_{-2\delta_1}\\
\leq&~ C\tau^{-\frac{\beta}{2}+\epsilon}\|l\|_{-2\delta_2}\|k\|_{-2\delta_3}\|y\|_{-2\delta_1}.
\end{align*}
The proof is finished.

(\romannumeral2) 
 We deduce from \eqref{Dutge0} and the finiteness of algebraic moments for $\pi$ that for any $\epsilon\in(0,\frac{\beta}{2}),$
\begin{align*}
&~\int_H\|(-A)^{\delta_1}(D\phi_{\tau,\epsilon}(x)-D\phi(x))\|^2\pi(\mathrm{d}x)\\
\leq&~ \int_H\Big[\int_0^{\infty}\big|e^{-\tau^{\frac{\beta}{2}-\epsilon}t}-1\big|\|(-A)^{\delta_1}Du(t,x)\|\mathrm{d}t\Big]^2\pi(\mathrm{d}x)\\
\leq &~C\Big[\int_1^{\infty}|e^{-\tau^{\frac{\beta}{2}-\epsilon}t}-1\big|e^{-C(t-1)}\mathrm{d}t+\int_0^1|e^{-\tau^{\frac{\beta}{2}-\epsilon}t}-1\big|(1+t^{-\delta_1})\mathrm{d}t\Big]^2
\int_H(1+\|x\|^4)\pi(\mathrm{d}x)\end{align*}
converges to $0\text{ as }\tau\rightarrow0,
$
where in the last step we use Lebesgue's dominated convergence theorem.
The proof is finished.
\end{proof}

\section{Proofs of Propositions in Section \ref{mainresult}}\label{sec4}
In this section, we provide detailed proofs of Propositions \ref{prop2}, \ref{prop3} and  \ref{prop4}.

\subsection{Proof of Proposition \ref{prop2}}

In this subsection, we prove Proposition \ref{prop2} based on the martingale type CLT (see e.g. \cite{MDA74}) and the space-dependent regularity estimates of Poisson-type equations associated to \eqref{spde} in subsection \ref{poissontype}.
\begin{proof}[Proof of Proposition \ref{prop2}.]
Denote $Z_k:=\big\langle \int_{t_k}^{t_{k+1}} S^N(t-t_k)P^N\mathrm{d}W(t), D\phi_{\tau,\epsilon}(X^N_k)\big\rangle.$ Note that $\tau^{\frac{\beta}{2}} Z_k$ is $\mathcal{F}_{t_{k+1}}$-measurable, and $\mathbb{E}[\tau^{\frac{\beta}{2}} Z_k|\mathcal{F}_{t_k}]=0, $ a.s. Hence, $\{\tau^{\frac{\beta}{2}} Z_k,0\leq k\leq m-1\}$ is a martingale difference series. We deduce from \cite[Theorem $2.3$]{MDA74} that it suffices to verify that\\
%(\textbf{Claim $1$})
%\begin{align}\label{first}
$
\mathbb{E}\Big[\tau^{\beta}\max_{0\leq i\leq m-1}|Z_i|^2\Big]\rightarrow 0\text{ as  }\tau\rightarrow 0$ and 
%\end{align}
%(\textbf{Claim $2$})
%\begin{align}\label{second}
$\tau^{\beta}\sum_{i=0}^{m-1}Z^2_i\overset{\mathbb{P}}{\longrightarrow} \pi(f) \text{ as }\tau\rightarrow 0.$

%\end{align}
\textbf{Claim $1$:} $
\mathbb{E}\Big[\tau^{\beta}\max_{0\leq i\leq m-1}|Z_i|^2\Big]\rightarrow 0\text{ as  }\tau\rightarrow 0$.
  
\textbf{Proof of Claim $1$.} It is clear that
\begin{align*}
&\mathbb{E}\Big[\tau^{\beta}\max_{0\leq i\leq m-1}|Z_i|^2\Big]\\
\leq&~ \mathbb{E}\Big[\tau^{\beta}\max_{0\leq i\leq m-1}|Z_i|^2\mathbbm{1}_{\{|Z_i|^{2}\leq 1\}}\Big]+\mathbb{E}\Big[\tau^{\beta}\max_{0\leq i\leq m-1}|Z_i|^{2}\mathbbm{1}_{\{|Z_i|^2>1\}}\Big]\\
\leq&~ \tau^{\beta}+\sum_{i=0}^{m-1}\tau^{\beta}\mathbb{E}[|Z_i|^4].
\end{align*}
By \eqref{phi1}, we have
\begin{align}\label{z^4}
\mathbb{E}|Z_i|^4\leq&~ \mathbb{E}\Big[\|(-A)^{\frac{1-\beta}{2}}D\phi_{\tau,\epsilon}(X^N_i)\|^4\Big\|\int_{t_i}^{t_{i+1}} S^N(t-t_i)(-A)^{\frac{\beta-1}{2}}P^N\mathrm{d}W(t)\Big\|^4\Big]\nonumber\\
\leq&~ C\mathbb{E}\Big[(1+\|X^N_i\|^8)\|h\|^4_{1,\infty}\Big\|\int_{t_i}^{t_{i+1}}S^N(t-t_i)(-A)^{\frac{\beta-1}{2}}\mathrm{d}W(t)\Big\|^4\Big]\nonumber\\
\leq&~C\big(\mathbb{E}[1+\|X^N_i\|^{16}]\big)^{\frac{1}{2}}\Big(\mathbb{E}\Big\|\int_{t_i}^{t_{i+1}}S^N(t-t_i)(-A)^{\frac{\beta-1}{2}}\mathrm{d}W(t)\Big\|^8\Big)^{\frac{1}{2}},
\end{align}
where in the third step we use the H\"{o}lder inequality. 
Applying the Burkholder--Davis--Gundy inequality to \eqref{z^4}, we get
$\mathbb{E}|Z_i|^4\leq C\tau^2.$
Hence, we have
$
\mathbb{E}\big[\tau^{\beta}\max_{0\leq i\leq m-1}|Z_i|^2\big]
\leq C(\tau^{\beta}+\tau).
$

\textbf{Claim $2$:} $\tau^{\beta}\sum_{i=0}^{m-1}Z^2_i\overset{\mathbb{P}}{\longrightarrow} \pi(f) \text{ as }\tau\rightarrow 0.$

\textbf{Proof of Claim $2$.} Ch{\'e}byshev's inequality indicates that it suffices to prove the convergence in the mean square sense.
Note that
\begin{align*}
 &~\mathbb{E}\Big[\tau^{\beta}\sum_{i=0}^{m-1}\big(Z_i^2-\tau\pi(f)\big)\Big]^2\\
%=&~\mathbb{E}\Big[\tau^{\beta}\sum_{i=0}^{m-1}\big(Z_i^2-\pi^N_{\tau}(\bar{f}^N_{t_i})+\pi^N_{\tau}(\bar{f}^N_{t_i})-\tau\pi(f)\big)\Big]^2\\
\leq\,&~ 3\mathbb{E}\Big[\tau^{\beta}\sum_{i=0}^{m-1}(Z_i^2-\pi^N_{\tau}(\bar{f}^{N,i}_{\tau,\epsilon}))\Big]^2+3\Big[\tau^{\beta}\sum_{i=0}^{m-1}(\pi^N_{\tau}(\bar{f}^{N,i}_{\tau,\epsilon})-\tau\pi(f_{\tau,\epsilon}))\Big]^2+3\Big[\pi(f_{\tau,\epsilon})-\pi(f)\Big]^2\\
=:&~3(I_1+I_2+I_3),
\end{align*}
where $\bar{f}^{N,i}_{\tau,\epsilon}(\cdot):=\int_{t_i}^{t_{i+1}}\| P^NQ^{\frac{1}{2}}S^N(t-t_i)D\phi_{\tau,\epsilon}(\cdot)\|^2\mathrm{d}t$ and $f_{\tau,\epsilon}(\cdot):=\| Q^{\frac{1}{2}}D\phi_{\tau,\epsilon}(\cdot)\|^2.$ It suffices to prove $I_i\rightarrow 0$ as $\tau\rightarrow 0$ for $i=1,2,3.$

The term $I_1$ can be divided as 
\begin{align*}
I_1=\,&~\tau^{2\beta}\sum_{i=0}^{m-1}\mathbb{E}\big[Z_i^2-\pi^N_{\tau}(\bar{f}^{N,i}_{\tau,\epsilon})\big]^2+2\tau^{2\beta}\sum_{0\leq i<j\leq m-1}\mathbb{E}\Big[(Z_i^2-\pi^N_{\tau}(\bar{f}^{N,i}_{\tau,\epsilon}))(Z_j^2-\pi^N_{\tau}(\bar{f}^{N,j}_{\tau,\epsilon}))\Big]\\
=:&~I_{1,1}+I_{1,2}.
\end{align*}
Similarly to \eqref{estimatef_1} and \eqref{estimatef_2}, it can be proved that $|\bar{f}^{N,i}_{\tau,\epsilon}(x)|\leq C\tau(1+\|x\|^4)$ and $|D\bar{f}^{N,i}_{\tau,\epsilon}(x)|\leq C\tau (1+\|x\|^4)$ for $x\in H$. In view of the finiteness of algebraic moments for $\pi^N_{\tau}$ (see Proposition \ref{propfull}), we have
\begin{align}\label{barf}\pi^N_{\tau}(\bar{f}^{N,i}_{\tau,\epsilon})\leq C\tau.
\end{align}
By \eqref{z^4} and \eqref{barf}, it can be verified that
\begin{align*}
I_{1,1}&\leq 2\tau^{2\beta}\sum_{i=0}^{m-1}\Big[\mathbb{E}Z_i^4+(\pi^N_{\tau}(\bar{f}^{N,i}_{\tau,\epsilon}))^2\Big]
%&\leq 2\tau^{2\beta}\sum_{i=0}^{m-1}\Big[C\mathbb{E}(1+\|X^N_i\|^8)\|h\|^4_{1,\infty}\|\int_{t_i}^{t_{i+1}}S^N(t-t_i)(-A)^{\frac{\beta-1}{2}}\mathrm{d}W(t)\|^4\\
%&\quad+C(\|(-A)^{\frac{\beta-1}{2}}Q^{\frac{1}{2}}\|_{\mathcal{L}_2(H)},\|h\|_{1,\infty})\tau^2\Big]
\leq C\tau^{1+\beta}.
\end{align*}
Using the properties of the conditional expectation, the term $I_{1,2}$ can be estimated by
\begin{align*}
|I_{1,2}|&\leq2\tau^{2\beta}\sum_{0\leq i<j\leq m-1}\Big|\mathbb{E}\Big[(Z_i^2-\pi^N_{\tau}(\bar{f}^{N,i}_{\tau,\epsilon}))\mathbb{E}\big[Z_j^2-\pi^N_{\tau}(\bar{f}^{N,j}_{\tau,\epsilon})\big|\mathcal{F}_{t_{i+1}}\big]\Big]\Big|\\
&=2\tau^{2\beta}\sum_{0\leq i<j\leq m-1}\Big|\mathbb{E}\Big[(Z_i^2-\pi^N_{\tau}(\bar{f}^{N,i}_{\tau,\epsilon}))\mathbb{E}\big[\bar{f}^{N,j}_{\tau,\epsilon}(X^N_j)-\pi^N_{\tau}(\bar{f}^{N,j}_{\tau,\epsilon})\big|\mathcal{F}_{t_{i+1}}\big]\Big]\Big|.
\end{align*}
By Remark \ref{rem1}, \eqref{z^4} and \eqref{barf}, we obtain that 
\begin{align*}
|I_{1,2}|
&\leq C\tau^{2\beta+1}\sum_{0\leq i<j\leq m-1}\Big[\mathbb{E}(Z_i^2-\pi^N_{\tau}(\bar{f}^{N,i}_{\tau,\epsilon}))^2\Big]^{\frac{1}{2}}\Big[\sup_{k\ge 0}\mathbb{E}(1+\|X^N_k\|^{10})\Big]^{\frac{1}{2}}e^{-\frac{1}{2}(\lambda_1-K)(j-i-1)\tau}\\
&\leq C\tau^{2\beta+2}\sum_{0\leq i<j\leq m-1}e^{-\frac{1}{2}(\lambda_1-K)(j-i)\tau}\\
&\leq C\tau^{2\beta+2}\Big[\sum_{0<j-i\le \frac{\ln m}{\tau}}e^{-\frac{1}{2}(\lambda_1-K)(j-i)\tau}+\sum_{\frac{\ln m}{\tau}<j-i\leq m-1 }e^{-\frac{1}{2}(\lambda_1-K)(j-i)\tau}\times\mathbbm{1}_{\{\ln m<\tau (m-1)\}}\Big].
\end{align*}
% for $\frac{1}{2}-(\lambda_1-K)<\epsilon<\frac{1}{2}.$
Combining $m=\tau^{-1-\beta},$ we get
\begin{align*}
|I_{1,2}|\leq Cm^{-2}\big( m\ln m/\tau+e^{-\frac{1}{2}(\lambda_1-K)\ln m}m^2\big)\rightarrow 0\text{ as }\tau\rightarrow 0.
\end{align*}
 Let $f^N_{\tau,\epsilon}(\cdot):=\|P^NQ^{\frac{1}{2}}D\phi_{\tau,\epsilon}(\cdot)\|^2.$ Then $f^N_{\tau,\epsilon}$ has the same estimates as \eqref{estimatef_1} and \eqref{estimatef_2}, i.e., $|f^N_{\tau,\epsilon}(x)|\leq C\|h\|^2_{1,\infty}(1+\|x\|^4)$ and $f\in\mathcal{C}^1_4.$ We can divide the ingredient of the summation in $I_2$ as 
follows
\begin{align*}
\pi^N_{\tau}(\bar{f}^{N,i}_{\tau,\epsilon})-\pi(\tau f_{\tau,\epsilon})=~&~\pi^N_{\tau}(\bar{f}^{N,i}_{\tau,\epsilon}-\tau f^N_{\tau,\epsilon})+\tau[\pi^N_{\tau}(f^N_{\tau,\epsilon})-\pi^N( f^N_{\tau,\epsilon})]+\tau[\pi^N( f^N_{\tau,\epsilon})-\pi(f_{\tau,\epsilon})]\\
=:&~I^i_{2,1}+I_{2,2}+I_{2,3}.
\end{align*}
It is clear that
\begin{align*}
|I^i_{2,1}|&=\Big|\int_{H_N}\int_{t_i}^{t_{i+1}}\Big(\|P^N Q^{\frac{1}{2}}S^N(t-t_i)D\phi_{\tau,\epsilon}(x)\|^2-\| P^NQ^{\frac{1}{2}}D\phi_{\tau,\epsilon}(x)\|^2\Big)\mathrm{d}t\pi^N_{\tau}(\mathrm{d}x)\Big|\\
&=\Big|\int_{H_N}\int_{t_i}^{t_{i+1}}\big\langle P^NQ^{\frac{1}{2}}(S^N(t-t_i)-\mathrm{Id})D\phi_{\tau,\epsilon}(x),Q^{\frac{1}{2}}(S^N(t-t_i)+\mathrm{Id})D\phi_{\tau,\epsilon}(x)\big\rangle\mathrm{d}t\pi^N_{\tau}(\mathrm{d}x)\Big|\\
&\leq \int_{H_N}\int_{t_i}^{t_{i+1}}\| P^N(-A)^{\frac{\beta-1}{2}}Q^{\frac{1}{2}}(-A)^{-\frac{\beta+1}{2}+\gamma}(S^N(t-t_i)-\mathrm{Id})(-A)^{1-\gamma}D\phi_{\tau,\epsilon}(x)\|\times\\
&\qquad\qquad \qquad\|(-A)^{\frac{\beta-1}{2}}Q^{\frac{1}{2}}(S^N(t-t_i)+\mathrm{Id})(-A)^{\frac{1-\beta}{2}}D\phi_{\tau,\epsilon}(x)\|\mathrm{d}t\pi^N_{\tau}(\mathrm{d}x)\\
&\leq \int_{H_N}\int_{t_i}^{t_{i+1}}C\| (-A^N)^{-\frac{1+\beta}{2}+\gamma}(S^N(t-t_i)-\mathrm{Id})\|_{\mathcal{L}(H)}\|(-A)^{\frac{\beta-1}{2}}Q^{\frac{1}{2}}\|^2_{\mathcal{L}(H)}\times\\
&\qquad \qquad \qquad \|(-A)^{1-\gamma}D\phi_{\tau,\epsilon}(x)\|\|(-A)^{\frac{1-\beta}{2}}D\phi_{\tau,\epsilon}(x)\|\mathrm{d}t\pi^N_{\tau}(\mathrm{d}x),
\end{align*}
where in the last step we use the fact that $(S^N(t-t_i)+\mathrm{Id})\in \mathcal{L}(H)$.
Utilizing \eqref{semigroup2}, \eqref{phi1} and the finiteness of algebraic moments for $\pi^N_{\tau}$ (see Proposition \ref{propfull}), we obtain
\begin{align*}
|I^i_{2,1}| &\leq C\tau\int_{H_N}\tau^{\frac{1+\beta}{2}-\gamma}\|(-A)^{\frac{\beta-1}{2}}Q^{\frac{1}{2}}\|^2_{\mathcal{L}(H)}\|(-A)^{1-\gamma}D\phi_{\tau,\epsilon}(x)\|\|(-A)^{\frac{1-\beta}{2}}D\phi_{\tau,\epsilon}(x)\|\pi^N_{\tau}(\mathrm{d}x)\\
&\leq C\|(-A)^{\frac{\beta-1}{2}}Q^{\frac{1}{2}}\|^2_{\mathcal{L}(H)}\|h\|^2_{1,\infty}\tau^{\frac{1+\beta}{2}-\gamma+1},
\end{align*}
and hence $\tau^{\beta}\sum_{i=0}^{m-1}\pi^N_{\tau}(\bar{f}^{N,i}_{\tau,\epsilon}-\tau f^N_{\tau,\epsilon})\rightarrow 0$ as $\tau\rightarrow 0$.

For the term $I_{2,3},$
\begin{align*}
\pi^N(f^N_{\tau,\epsilon})-\pi(f_{\tau,\epsilon})=\big[\pi^N(f^N_{\tau,\epsilon})-\pi(f^N_{\tau,\epsilon})\big]+\big[\pi(f^N_{\tau,\epsilon})-\pi(f_{\tau,\epsilon})\big]=:I^1_{2,3}+I^2_{2,3}.
\end{align*}
Using \eqref{phi1} and the finiteness of the algebraic   moment of $\pi$ (see Proposition \ref{propexact}), the term $I^2_{2,3}$ can be estimated as
\begin{align*}
|I^2_{2,3}|=&~|\pi(f^N_{\tau,\epsilon}-f_{\tau,\epsilon})|=\pi\big(\|(P^N-\mathrm{Id})Q^{\frac{1}{2}}D\phi_{\tau,\epsilon}\|^2\big)\\
\leq&~
\|P^N-\mathrm{Id}\|^2_{\mathcal{L}(H)}\|(-A)^{\frac{\beta-1}{2}}Q^{\frac{1}{2}}\|^2_{\mathcal{L}(H)}\pi\big( \|(-A)^{\frac{1-\beta}{2}}D\phi_{\tau,\epsilon}(\cdot)\|^2\big)\\
\leq&~ C\|P^N-\mathrm{Id}\|^2_{\mathcal{L}(H)}\|(-A)^{\frac{\beta-1}{2}}Q^{\frac{1}{2}}\|^2_{\mathcal{L}(H)}(1+\pi(\|\cdot\|^4)),
%&\leq CN^{-2-2\beta+4\gamma}\tau^{-1+2\epsilon}
\end{align*}
which goes to $0$ as $\tau\rightarrow 0$. Actually, $\tau\rightarrow0$ means $N\to\infty.$ 
The remaining terms $I_{2,2}$ and $I^1_{2,3}$ can be estimated according to the fact that $f^N_{\tau,\epsilon}\in \mathcal{C}^1_4$ and Remark \ref{remark2}.

It remains to prove $I_3\rightarrow 0$ as $\tau\rightarrow 0.$ We deduce from H\"older's inequality, \eqref{phi1} and Lemma \ref{lemmaDDphi} that
\begin{align*}
I_3\leq&~ \Big[\int_{H}\|Q^{\frac{1}{2}}D\phi_{\tau,\epsilon}(x)\|^2-\|Q^{\frac{1}{2}}D\phi(x)\|^2\pi(\mathrm{d}x)\Big]^2\\
=&~ \Big[\int_{H}\langle Q^{\frac{1}{2}}\big(D\phi_{\tau,\epsilon}(x)-D\phi(x)\big),Q^{\frac{1}{2}}\big(D\phi_{\tau,\epsilon}(x)+D\phi(x)\big)\rangle\pi(\mathrm{d}x)\Big]^2\\
\leq &~C\|Q\|^2_{\mathcal{L}(H)}\int_{H}\|D\phi_{\tau,\epsilon}(x)-D\phi(x)\|^2\pi(\mathrm{d}x)\int_{H}\|D\phi_{\tau,\epsilon}(x)+D\phi(x)\|^2\pi(\mathrm{d}x)
\rightarrow 0 \text{ as }\tau\rightarrow 0.
\end{align*}

The proof is finished.
\end{proof}

\subsection{Proof of Proposition \ref{prop3}}
Before giving the proof of Proposition \ref{prop3}, we present the following lemma which paves a way for proving Proposition \ref{prop3}.
\begin{lemma}\label{lemmaT}
Let $\Theta:H\times\mathbb{R}^{N}\rightarrow \mathbb{R}$ be a measurable function, and let
$\mathbb{Y}_k$ be $H$-valued $\mathcal{F}_{t_k}$-measurable random variable, and $\{r^1_{k},\ldots,r^N_k\}$ be real-valued $\mathcal{F}_{t_{k+1}}$-measurable and $\mathcal{F}_{t_k}$-independent random variables
 for $k=0,1,\ldots,m-1.$
If for $k=0,1,\ldots,m-1,$
$$\mathbb{E}[\Theta(\mathbb{Y}_k,r^1_{k},\ldots,r^N_k)\big|\mathcal{F}_{t_k}]=0$$ and \begin{align*}
\sum_{n=2}^{\infty}\frac{1}{n!}\mathbb{E}\Big[\big|\Theta(\mathbb{Y}_k,r^1_{k},\ldots,r^N_k)\big|^n\Big|\mathcal{F}_{t_k}\Big]\leq \frac{C}{m}
\end{align*}
 for some $C>0,$ 
then 
\begin{align*}
\mathbb{E}\Big[\exp\Big\{\sum_{k=0}^{m-1}\Theta(\mathbb{Y}_k,r^1_{k},\ldots,r^N_k)\Big\}\Big]\leq e^{C}.
\end{align*}
\end{lemma}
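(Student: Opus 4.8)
The plan is to run a one-step-at-a-time conditioning argument, i.e. a telescoping scheme for the exponential. Write $\Theta_k:=\Theta(\mathbb{Y}_k,r^1_k,\ldots,r^N_k)$, which is $\mathcal{F}_{t_{k+1}}$-measurable, and observe that each partial sum $\exp\{\sum_{k=0}^{j-1}\Theta_k\}$ is $\mathcal{F}_{t_j}$-measurable and nonnegative. The heart of the proof is the pointwise (a.s.) bound
\[
\mathbb{E}\big[e^{\Theta_k}\,\big|\,\mathcal{F}_{t_k}\big]\leq e^{C/m},\qquad k=0,1,\ldots,m-1,
\]
after which the global estimate follows by peeling off one factor at a time through the tower property.

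To establish this key bound I would expand the exponential as $e^{\Theta_k}=\sum_{n=0}^{\infty}\Theta_k^n/n!$ and take the conditional expectation term by term. The zeroth-order term contributes $1$, the first-order term $\mathbb{E}[\Theta_k\mid\mathcal{F}_{t_k}]$ vanishes by the centering hypothesis, and the remaining tail is dominated by $\sum_{n\ge 2}\frac{1}{n!}\mathbb{E}[|\Theta_k|^n\mid\mathcal{F}_{t_k}]\le C/m$ by the second hypothesis. Combining these with the elementary inequality $1+x\le e^x$ yields $\mathbb{E}[e^{\Theta_k}\mid\mathcal{F}_{t_k}]\le 1+C/m\le e^{C/m}$.

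With the key bound in hand, I would condition on $\mathcal{F}_{t_{m-1}}$: since $\exp\{\sum_{k=0}^{m-2}\Theta_k\}\ge 0$ is $\mathcal{F}_{t_{m-1}}$-measurable, the tower property gives
\begin{align*}
\mathbb{E}\Big[\exp\Big\{\sum_{k=0}^{m-1}\Theta_k\Big\}\Big]
&=\mathbb{E}\Big[\exp\Big\{\sum_{k=0}^{m-2}\Theta_k\Big\}\,\mathbb{E}\big[e^{\Theta_{m-1}}\,\big|\,\mathcal{F}_{t_{m-1}}\big]\Big]\\
&\le e^{C/m}\,\mathbb{E}\Big[\exp\Big\{\sum_{k=0}^{m-2}\Theta_k\Big\}\Big].
\end{align*}
Iterating this inequality $m$ times collapses the sum to the empty product and produces the factor $(e^{C/m})^m=e^{C}$, which is exactly the assertion.

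The only genuine subtlety, and the step I would treat most carefully, is the interchange of the infinite series with the conditional expectation. This is legitimate because the second hypothesis guarantees $\mathbb{E}[e^{|\Theta_k|}\mid\mathcal{F}_{t_k}]=\sum_{n\ge 0}\frac{1}{n!}\mathbb{E}[|\Theta_k|^n\mid\mathcal{F}_{t_k}]<\infty$ almost surely; Tonelli's theorem justifies the interchange for the absolute series, and dominated convergence then transfers it to the signed series. Everything else is routine, and I note that the $\mathcal{F}_{t_k}$-independence of $\{r^1_k,\ldots,r^N_k\}$ is not used in the proof itself---it serves only to make the two conditional hypotheses verifiable in the later applications of the lemma.
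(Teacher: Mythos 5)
Your proof is correct and follows essentially the same route as the paper's: expand the exponential, use the centering to kill the first-order term, bound the tail by $C/m$ to get $\mathbb{E}[e^{\Theta_k}\mid\mathcal{F}_{t_k}]\le 1+C/m$, and iterate via the tower property to obtain $(1+C/m)^m\le e^C$. Your added remarks on justifying the series/conditional-expectation interchange and on the independence hypothesis being unused here are accurate but do not change the argument.
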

\begin{proof}
By Taylor's expansion, we have
\begin{align*} 
&\mathbb{E}[\exp\{\Theta(\mathbb{Y}_k,r^1_{k},\ldots,r^N_k)\}\big|\mathcal{F}_{t_k}]\\
\leq&~1+\sum_{n=2}^{\infty}\frac{1}{n!}\mathbb{E}\Big[\big|\Theta(\mathbb{Y}_k,r^1_{k},\ldots,r^N_k)\big|^n\Big|\mathcal{F}_{t_k}\Big]\leq 1+\frac{C}{m}.
\end{align*}
Hence, by iterating, we get
\begin{align*}
&\mathbb{E}\Big[\exp\Big\{\sum_{k=0}^{m-1}\Theta(\mathbb{Y}_k,r^1_{k},\ldots,r^N_k)\Big\}\Big]\\
=&~\mathbb{E}\Big[\exp\Big\{\sum_{k=0}^{m-2}\Theta(\mathbb{Y}_k,r^1_{k},\ldots,r^N_k)\Big\}\mathbb{E}\big[\exp\big\{\Theta(\mathbb{Y}_{m-1},r^1_{m-1},\ldots,r^N_{m-1})\big\}\big|\mathcal{F}_{t_{m-1}}\big]\Big]\\
\leq&~ \big(1+\frac{C}{m}\big)^m\leq e^{C}.
\end{align*}
The proof is finished.
\end{proof}
%\textbf{Proof of Proposition \ref{prop3}.}
With this lemma in hand, we give the proof of Proposition \ref{prop3}.
\begin{proof}[Proof of Proposition \ref{prop3}.]
Below, we give estimates of $\mathcal{R}^N_{\tau, i},i=1,\ldots,5$ introduced in subsection \ref{proofmain}.

\textbf{Estimate of $\mathcal{R}^N_{\tau, 1}$.}
By Ch{\'e}byshev's inequality and \eqref{phi1}, for each fixed $a>0,$
\begin{align*}
\mathbb{P}(|\mathcal{R}^N_{\tau, 1}|\ge a)\leq a^{-1}\tau^{\frac{\beta}{2}}\mathbb{E}|\phi_{\tau,\epsilon}(X^N_m)-\phi_{\tau,\epsilon}(X^N_0)|\leq a^{-1}\tau^{\frac{\beta}{2}}C\|h\|_{\infty}(1+\sup_{k\in\mathbb{N}}\mathbb{E}\|X^N_k\|^2)\rightarrow 0 \text{ as }\tau\rightarrow0.
\end{align*} 

\textbf{Estimate of $\mathcal{R}^N_{\tau, 2}$.}
For $\gamma>0$ small enough, by the linear growth property of $F$ and \eqref{phi1},
\begin{align*}
\mathbb{P}(|\mathcal{R}^N_{\tau, 2}|\ge a)\leq &~a^{-1}\tau^{\frac{\beta}{2}}\sum_{k=0}^{m-1}\mathbb{E}\Big|\Big\langle \int_{t_k}^{t_{k+1}}(\mathrm{Id}-S^N(t-t_k))F(X^N_k)\mathrm{d}t,D\phi_{\tau,\epsilon}(X^N_k)\Big\rangle \Big|\\
\leq&~ C\tau^{1+\frac{\beta}{2}}\sum_{k=0}^{m-1}\tau^{1-\gamma}\mathbb{E}\Big[\|F(X^N_k)\|\|(-A)^{1-\gamma}D\phi_{\tau,\epsilon}(X^N_k)\|\Big]\\
\leq&~ C\tau^{1-\frac{\beta}{2}-\gamma}(1+\sup_{k\in\mathbb{N}}\mathbb{E}\|X^N_k\|^3)\rightarrow 0 \text{ as }\tau\rightarrow0.
\end{align*}

\textbf{Estimate of $\mathcal{R}^N_{\tau, 3}$.}
Noting that $D^2\phi_{\tau,\epsilon}(x)=(D^2\phi_{\tau,\epsilon}(x))^*,$ the term $\mathcal{R}^N_{\tau,3}$ can be divided into four terms denoted by $\mathcal{R}^{N,i}_{\tau,3},i=1,2,3,4,$
where 
\begin{align*}
\mathcal{R}^{N,1}_{\tau, 3}:=&\frac{1}{2}\tau^{1+\frac{\beta}{2}}\sum_{k=0}^{m-1}\mathrm{Tr}(QD^2\phi_{\tau,\epsilon}(X^N_k))-\frac{1}{2}\tau^{\frac{\beta}{2}}\sum_{k=0}^{m-1}\big\langle S^N(\tau)P^N\Delta W_k,D^2\phi_{\tau,\epsilon}(X^N_k)S^N(\tau)P^N\Delta W_k\big\rangle,\\
\mathcal{R}^{N,2}_{\tau,3}:=&-\frac{1}{2}\tau^{\frac{\beta}{2}}\sum_{k=0}^{m-1}\Big\langle (S^N(\tau)-\mathrm{Id})X^N_k+\tau S^N(\tau)F^N(X^N_k),\\
& \qquad \qquad \quad \;\;D^2\phi_{\tau,\epsilon}(X^N_k)\big((S^N(\tau)-\mathrm{Id})X^N_k+\tau S^N(\tau)F^N(X^N_k)\big)\Big\rangle,\\
\mathcal{R}^{N,3}_{\tau, 3}:=&-\sum_{k=0}^{m-1}\tau^{\frac{\beta}{2}}\big\langle D^2\phi_{\tau,\epsilon}(X^N_k)(S^N(\tau)-\mathrm{Id})X^N_k,S^N(\tau)P^N\Delta W_k\big\rangle, \\
%&\quad -\frac{1}{2}\sum_{k=0}^{m-1}\tau^{\frac{\beta}{2}}\langle S^N(\tau)P^N\Delta W_k,D^2\phi(X^N_k)(S^N(\tau)-\mathrm{Id})X^N_k\rangle, \\
\mathcal{R}^{N,4}_{\tau, 3}:=&-\sum_{k=0}^{m-1}\tau^{\frac{\beta}{2}}\big\langle S^N(\tau)P^N\Delta W_k,D^2\phi_{\tau,\epsilon}(X^N_k)\tau S^N(\tau)F^N(X^N_k)\big\rangle.
\end{align*}

\textit{Estimate of $\mathcal{R}^{N,1}_{\tau,3}.$} The term $\mathcal{R}^{N,1}_{\tau,3}$ can be further divided into four terms by inserting some terms, precisely, we have
\begin{align*}
\mathcal{R}^{N,1}_{\tau,3}=\,&~\frac{1}{2}\tau^{1+\frac{\beta}{2}}\sum_{k=0}^{m-1}\Big[\mathrm{Tr}(QD^2\phi_{\tau,\epsilon}(X^N_k))-\mathrm{Tr}(P^NQ^{\frac{1}{2}}D^2\phi_{\tau,\epsilon}(X^N_k)S^N(\tau)Q^{\frac{1}{2}})\Big]\\
&+\frac{1}{2}\tau^{1+\frac{\beta}{2}}\sum_{k=0}^{m-1}\mathrm{Tr}(P^NQ^{\frac{1}{2}}(\mathrm{Id}-S^N(\tau))D^2\phi_{\tau,\epsilon}(X^N_k)S^N(\tau)Q^{\frac{1}{2}})\\
&+\frac{1}{2}\tau^{1+\frac{\beta}{2}}\sum_{k=0}^{m-1}\sum_{j=1}^N\langle Q^{\frac{1}{2}}S^N(\tau)D^2\phi_{\tau,\epsilon}(X^N_k)S^N(\tau)Q^{\frac{1}{2}}e_j,e_j\rangle\Big[1-\big(\frac{\Delta_k\beta_j}{\sqrt{\tau}}\big)^2\Big]\\
&-\frac{1}{2}\tau^{1+\frac{\beta}{2}}\sum_{k=0}^{m-1}\sum_{i\neq j}\langle Q^{\frac{1}{2}}S^N(\tau)D^2\phi_{\tau,\epsilon}(X^N_k)S^N(\tau)Q^{\frac{1}{2}}e_i,e_j\rangle\frac{\Delta_k\beta_i}{\sqrt{\tau}}\frac{\Delta_k\beta_j}{\sqrt{\tau}}\\
=:&~\mathcal{R}^{N,1,1}_{\tau,3}+\mathcal{R}^{N,1,2}_{\tau,3}+\mathcal{R}^{N,1,3}_{\tau,3}+\mathcal{R}^{N,1,4}_{\tau,3}.
\end{align*}
Noting that
\begin{align*}
\mathbb{P}(\mathcal{R}^{N,1}_{\tau,3}\ge a)\leq \sum_{i=1}^4\mathbb{P}(\mathcal{R}^{N,1,i}_{\tau,3}\ge \frac{a}{4}),\quad \mathbb{P}(-\mathcal{R}^{N,1}_{\tau,3}\ge a)\leq \sum_{i=1}^4\mathbb{P}(-\mathcal{R}^{N,1,i}_{\tau,3}\ge \frac{a}{4}),
\end{align*}
it suffices to prove that $\mathbb{P}(\mathcal{R}^{N,1,i}_{\tau,3}\ge \frac{a}{4})$ and $\mathbb{P}(-\mathcal{R}^{N,1,i}_{\tau,3}\ge \frac{a}{4})$ converge to zero as $\tau\rightarrow0$ for $i=1,2,3,4.$

For the term $\mathcal{R}^{N,1,1}_{\tau,3}$, by using \cite[Lemma $16.20$]{MV} and the assumption $Q$ commutes with $A$, we get
 $$\mathrm{Tr}\big(P^NQ^{\frac{1}{2}}Q^{\frac{1}{2}}D^2\phi_{\tau,\epsilon}(x)\big)=\mathrm{Tr}\big(Q^{\frac{1}{2}}D^2\phi_{\tau,\epsilon}(x)P^NQ^{\frac{1}{2}}\big)$$ and 
 \begin{align*}
 &\mathrm{Tr}\big((\mathrm{Id}-P^N)Q^{\frac{1}{2}}Q^{\frac{1}{2}}D^2\phi_{\tau,\epsilon}(x)\big)\\
 =&~\mathrm{Tr}\big((\mathrm{Id}-P^N)(-A)^{-\beta+\gamma}(-A)^{\frac{\beta-1}{2}}Q^{\frac{1}{2}}(-A)^{\frac{\beta-1}{2}}Q^{\frac{1}{2}}(-A)^{1-\gamma}D^2\phi_{\tau,\epsilon}(x)\big)\\
 =&~\mathrm{Tr}\big((-A)^{\frac{\beta-1}{2}}Q^{\frac{1}{2}}(-A)^{1-\gamma}D^2\phi_{\tau,\epsilon}(x)(\mathrm{Id}-P^N)(-A)^{-\beta+\gamma}(-A)^{\frac{\beta-1}{2}}Q^{\frac{1}{2}}\big).
 \end{align*}
 Therefore, applying \eqref{phi2}, the assumption that $A$ commutes with $Q$, the inequality $\|(\mathrm{Id}-P^N)(-A)^{-\beta+\gamma}\|_{\mathcal{L}(H)}\leq N^{-2(\beta-\gamma)}$ with $\gamma\leq \beta$ and the fact that $P^Ne_j=e_j$ for $j=1,\ldots,N$, we have
\begin{align*}
&\quad \mathbb{E}|\mathcal{R}^{N,1,1}_{\tau,3}|\\
&=\frac{1}{2}\tau^{1+\frac{\beta}{2}}\mathbb{E}\Big|\Big[\sum_{k=0}^{m-1}\Big(\mathrm{Tr}(P^NQD^2\phi_{\tau,\epsilon}(X^N_k))-\mathrm{Tr}(P^N Q^{\frac{1}{2}}D^2\phi_{\tau,\epsilon}(X^N_k)S^N(\tau)Q^{\frac{1}{2}})\Big)\\
&\quad +\sum_{k=0}^{m-1}\mathrm{Tr}((\mathrm{Id}-P^N)QD^2\phi_{\tau,\epsilon}(X^N_k))\Big]\Big|\\
&=\frac{1}{2}\tau^{1+\frac{\beta}{2}}\mathbb{E}\Big|\Big[\sum_{k=0}^{m-1}
\sum_{j=1}^N\Big(\langle D^2\phi_{\tau,\epsilon}(X^N_k)P^NQ^{\frac{1}{2}}e_j,Q^{\frac{1}{2}}e_j\rangle-\langle D^2\phi_{\tau,\epsilon}(X^N_k)S^N(\tau)Q^{\frac{1}{2}}P^Ne_j,Q^{\frac{1}{2}}e_j\rangle\Big)\\
&\quad +\sum_{k=0}^{m-1}\mathrm{Tr}((\mathrm{Id}-P^N)QD^2\phi_{\tau,\epsilon}(X^N_k))\Big]\Big|\\
&\leq C\tau^{1+\frac{\beta}{2}}\sum_{k=0}^{m-1}\sum_{j=1}^N \tau^{\beta-\gamma}\mathbb{E}\|(-A^N)^{\frac{1+\beta}{2}-\gamma}D^2\phi_{\tau,\epsilon}(X^N_k)(-A^N)^{\frac{1-\beta}{2}}\|_{\mathcal{L}(H)}\|(-A)^{\frac{\beta-1}{2}}Q^{\frac{1}{2}}e_j\|^2\\
&\quad+C\tau^{1+\frac{\beta}{2}}\sum_{k=0}^{m-1}\sum_{j=1}^{\infty}\mathbb{E}\|(-A)^{1-\gamma}D^2\phi_{\tau,\epsilon}(X^N_k)\|_{\mathcal{L}(H)}\|(\mathrm{Id}-P^N)(-A)^{-\beta+\gamma}\|_{\mathcal{L}(H)}\|(-A)^{\frac{\beta-1}{2}}Q^{\frac{1}{2}}e_j\|^2\\
&\leq C(\tau^{\frac{\beta}{2}-\gamma}+\tau^{-\frac{\beta}{2}}N^{-2(\beta-\gamma)})(\sup_{k\in\mathbb{N}}\mathbb{E}\|X^N_k\|^2+1),
\end{align*}
which converges to 0 when we assume that $\alpha>\frac{1}{4}$ and take $0<\gamma\leq\min\big\{\frac{\beta}{2},\frac{\beta}{2\alpha}(\alpha-\frac{1}{4})\big\} $.
% (e.g. $\gamma\leq \frac{\beta}{2\alpha}(\alpha-\frac{1}{4})$).
Similarly, we can prove that $\mathbb{E}|\mathcal{R}^{N,1,2}_{\tau,3}|\leq C\tau^{\frac{\beta}{2}-\gamma}.$ Applying the Ch\'ebyshev inequality yileds that for each fixed $a>0,$ $\mathbb{P}(|\mathcal{R}^{N,1,i}_{\tau,3}|>\frac{a}{4})\to 0$ as $\tau\to0$ for $i=1,2.$

It remains to prove that for each fixed $a>0,$ $\mathbb{P}(\mathcal{R}^{N,1,i}_{\tau,3}\ge \frac{a}{4})\rightarrow 0,\;\mathbb{P}(-\mathcal{R}^{N,1,i}_{\tau,3}\ge \frac{a}{4})\rightarrow 0$ as $\tau\rightarrow 0$ for $i=3,4.$
By Ch{\'e}byshev's inequality $\mathbb{P}(X>b)\leq e^{-b}\mathbb{E}[e^{X}]$ for $b>0$ and a random variable $X,$ we have 
\begin{align*}
&\mathbb{P}\Big(\frac{1}{2}\tau^{1+\frac{\beta}{2}-\gamma}\sum_{k=0}^{m-1}\sum_{j=1}^N\big\langle Q^{\frac{1}{2}}S^N(\tau)D^2\phi_{\tau,\epsilon}(X^N_k)S^N(\tau)Q^{\frac{1}{2}}e_j,e_j\big\rangle\big(1-(\frac{\Delta_k\beta_j}{\sqrt{\tau}})^2\big)\ge\frac{a}{4}\tau^{-\gamma}\Big)\\
\leq&~ e^{-4^{-1}a\tau^{-\gamma}}     \mathbb{E}\Big[\exp\Big\{\frac{1}{2}\tau^{1+\frac{\beta}{2}-\gamma}\sum_{k=0}^{m-1}\sum_{j=1}^N\big\langle Q^{\frac{1}{2}}S^N(\tau)D^2\phi_{\tau,\epsilon}(X^N_k)S^N(\tau)Q^{\frac{1}{2}}e_j,e_j\big\rangle\big(1-(\frac{\Delta_k\beta_j}{\sqrt{\tau}})^2\big)\Big\}\Big].
\end{align*}
Let $$\Theta(X^N_k,\Delta_{k}\beta_1,\ldots,\Delta_{k}\beta_N)=\frac{1}{2}\tau^{1+\frac{\beta}{2}-\gamma}\sum_{j=1}^N\big\langle Q^{\frac{1}{2}}S^N(\tau)D^2\phi_{\tau,\epsilon}(X^N_{k})S^N(\tau)Q^{\frac{1}{2}}e_j,e_j\big\rangle\big(1-(\frac{\Delta_{k}\beta_j}{\sqrt{\tau}})^2\big).$$
 On one hand, we can see that $\mathbb{E}[\Theta(X^N_k,\Delta_{k}\beta_1,\ldots,\Delta_{k}\beta_N)\big|\mathcal{F}_{t_k}]=0.$
On the other hand, by using the assumption that $A$ commutes with $Q$, we have for $k=0,1,\ldots,m-1,$
\begin{align*}
&\mathcal{J}_1:=\mathbb{E}\Big[\sum_{n=2}^{\infty}\frac{1}{n!}|\Theta(X^N_{k},\Delta_{k}\beta_1,\ldots,\Delta_{k}\beta_N)|^n\Big|\mathcal{F}_{t_{k}}\Big]\\
=&~\mathbb{E}\Big[\sum_{n=2}^{\infty}\frac{1}{n!}\Big|\frac{1}{2}\tau^{1+\frac{\beta}{2}-\gamma}\sum_{j=1}^N\big\langle Q^{\frac{1}{2}}S^N(\tau)D^2\phi_{\tau,\epsilon}(X^N_{k})S^N(\tau)Q^{\frac{1}{2}}e_j,e_j\big\rangle\big(1-(\frac{\Delta_{k}\beta_j}{\sqrt{\tau}})^2\big)\Big|^n\Big|\mathcal{F}_{t_{k}}\Big]\\
\leq &~\mathbb{E}\Big[\sum_{n=2}^{\infty}\frac{1}{n!}\Big(\frac{1}{2}\tau^{1+\frac{\beta}{2}-\gamma}\sum_{j=1}^N\|(-A)^{-\frac{\beta-\gamma}{2}}(-A)^{\frac{\beta-1}{2}}Q^{\frac{1}{2}}e_j\|^2\|(-A)^{\frac{1-\gamma}{2}}D^2\phi_{\tau,\epsilon}(X^N_{k})(-A)^{\frac{1-\gamma}{2}}\|_{\mathcal{L}(H)}\times\\
&\quad\big(1+(\frac{\Delta_{k}\beta_j}{\sqrt{\tau}})^2\big)\Big)^n\Big|\mathcal{F}_{t_{k}}\Big]\\
\leq &~\mathbb{E}\Big[\sum_{n=2}^{\infty}\frac{1}{n!}\Big(C\tau^{1+\frac{\beta}{2}-\gamma}\sum_{j=1}^Nj^{-2(\beta-\gamma)}\|(-A)^{\frac{\beta-1}{2}}Q^{\frac{1}{2}}e_j\|^2\tau^{-\frac{\beta}{2}+\epsilon}\big(1+(\frac{\Delta_{k}\beta_j}{\sqrt{\tau}})^2\big)\Big)^n\Big|\mathcal{F}_{t_{k}}\Big].
\end{align*}

When $\beta>\frac{1}{2},$ by the fact that $\mathcal{N}(0,1)(|\cdot|^{2n})=(2n-1)!!$ for standard normal distribution $\mathcal{N}(0,1),$ we get that 
%$\mathcal{J}_1$ is no larger than
\begin{align*}
\mathcal{J}_1\leq
&~\mathbb{E}\Big[\sum_{n=2}^{\infty}\frac{1}{n!}\Big(C\|(-A)^{\frac{\beta-1}{2}}Q^{\frac{1}{2}}\|^2_{\mathcal{L}_2(H)}\tau^{1+\epsilon-\gamma} \big(1+\sup_{1\leq j\leq N}(\frac{\Delta_{k}\beta_j}{\sqrt{\tau}})^2\big)\Big)^n\Big]\\
\leq &~\sum_{n=2}^{\infty}\frac{1}{n!}(C\|(-A)^{\frac{\beta-1}{2}}Q^{\frac{1}{2}}\|^2_{\mathcal{L}_2(H)}\tau^{1+\epsilon-\gamma})^n2^{n-1}\Big(1+\mathbb{E}\Big[\sup_{1\leq j\leq N}(\frac{\Delta_{k}\beta_j}{\sqrt{\tau}})^{2n}\Big]\Big)\\
\leq&~ \sum_{n=2}^{\infty}\frac{1}{n!}(C\|(-A)^{\frac{\beta-1}{2}}Q^{\frac{1}{2}}\|^2_{\mathcal{L}_2(H)}\tau^{1+\epsilon-\gamma})^n2^{n-1}(1+N\times(2n-1)!!)\\
\leq&~ \sum_{n=2}^{\infty}\frac{(2n)!!}{n!}(2C\|(-A)^{\frac{\beta-1}{2}}Q^{\frac{1}{2}}\|^2_{\mathcal{L}_2(H)}\tau^{1+\epsilon-\gamma})^nN\\
\leq&~ \sum_{n=2}^{\infty}(4C\|(-A)^{\frac{\beta-1}{2}}Q^{\frac{1}{2}}\|^2_{\mathcal{L}_2(H)}\tau^{1+\epsilon-\gamma} N^{\frac{1}{n}})^n.
\end{align*}
Since $\epsilon>\frac{\alpha-1+\beta}{2}$, we have $\epsilon\ge\frac{\alpha-1+\beta}{2}+\gamma$ for $\gamma>0$ sufficiently small, i.e., $1+\epsilon-\gamma-\frac{1}{2}\alpha\ge \frac{1+\beta}{2}.$  Taking $\tau$ small enough 
such that 
$4C\|(-A)^{\frac{\beta-1}{2}}Q^{\frac{1}{2}}\|^2_{\mathcal{L}_2(H)}\tau^{\frac{1+\beta}{2}}< \frac{1}{2},$ 
we obtain $$\mathcal{J}_1\leq \sum_{n=2}^{\infty}\big[4C\|(-A)^{\frac{\beta-1}{2}}Q^{\frac{1}{2}}\|^2_{\mathcal{L}_2(H)}\tau^{\frac{1+\beta}{2}}(\tau^{\alpha}N)^{\frac{1}{2}}\big]^n\leq C\tau^{1+\beta}=\frac{C}{m}.$$

When $0<\beta\leq \frac{1}{2},$ applying H\"{o}lder's inequality gives that 
%$\mathcal{J}_1$ is no larger than
\begin{align*}
\mathcal{J}_1\leq
&~\mathbb{E}\Big[\sum_{n=2}^{\infty}\frac{1}{n!}\big[C\tau^{1+\epsilon-\gamma}\big(\sum_{j=1}^Nj^{-2(\beta-\gamma)\times \frac{1}{2\beta-3\gamma}}\big)^{2\beta-3\gamma}\big(\sum_{j=1}^{N}(1+(\frac{\Delta_{k}\beta_j}{\sqrt{\tau}})^2)^{ \frac{1}{1-2\beta+3\gamma}}\big)^{1-2\beta+3\gamma}\big]^n\Big]\\
\leq&~ \sum_{n=2}^{\infty}\frac{1}{n!}(C\tau^{1+\epsilon-\gamma})^nN^{(1-2\beta+3\gamma)n-1}\sum_{j=1}^N\mathbb{E}\big(1+(\frac{\Delta _{k}\beta_j}{\sqrt{\tau}})^2\big)^{n}\\
\leq&~ \sum_{n=2}^{\infty}(C\tau^{1+\epsilon-\gamma}N^{1-2\beta+3\gamma})^n.
\end{align*}
Since $\epsilon>0$ and $\alpha<\frac{1}{2}$, we have
$\epsilon\ge\frac{5\gamma}{2}> \frac{2\alpha(1-\beta+3\gamma)-1+\beta+2\gamma}{2}$ for $\gamma>0$ sufficiently small, i.e.,
$1+\epsilon-\gamma-\alpha(1-2\beta+3\gamma)\ge \frac{1+\beta}{2}.$ 
Taking $\tau$ small enough such that $C\tau^{\frac{1+\beta}{2}}\leq\frac{1}{2},$
we get $\mathcal{J}_1\leq C\tau^{1+\beta}=\frac{C}{m}$ for $\beta\leq\frac{1}{2}.$

Applying Lemma \ref{lemmaT}, we get $\mathbb{P}(\mathcal{R}^{N,1,3}_{\tau,3}\ge \frac{a}{4})\leq Ce^{-4^{-1}a\tau^{-\gamma}}\rightarrow 0$ as $\tau\rightarrow0.$
By the similar procedure, we can prove that 
$\mathbb{P}(-\mathcal{R}^{N,1,3}_{\tau,3}\ge \frac{a}{4})\leq Ce^{-4^{-1}a\tau^{-\gamma}}\rightarrow0$ as $\tau\rightarrow 0,$ for each fixed $a>0.$ These imply that $\mathbb{P}(|\mathcal{R}^{N,1,3}_{\tau,3}|\ge \frac{a}{4})\rightarrow 0$ as $\tau\rightarrow0.$

Similarly, for the term $\mathcal{R}^{N,1,4}_{\tau,3},$ note that for $k=0,1,\ldots,m-1,$
$$\mathbb{E}\Big[\sum_{i\neq j}\big\langle Q^{\frac{1}{2}}S^N(\tau)D^2\phi_{\tau,\epsilon}(X^N_k)S^N(\tau)Q^{\frac{1}{2}}e_i,e_j\big\rangle\Delta_k\beta_i\Delta_k\beta_j\Big|\mathcal{F}_{t_k}\Big]=0$$ and
\begin{align*}
\mathcal{J}_2:=
&~\mathbb{E}\Big[\sum_{n=2}^{\infty}\frac{1}{n!}\Big(-\frac{1}{2}\tau^{1+\frac{\beta}{2}-\gamma}\sum_{i\neq j}\big\langle Q^{\frac{1}{2}}S^N(\tau)D^2\phi_{\tau,\epsilon}(X^N_{k})S^N(\tau)Q^{\frac{1}{2}}e_i,e_j\big\rangle\frac{\Delta_{k}\beta_i}{\sqrt{\tau}}\frac{\Delta_{k}\beta_j}{\sqrt{\tau}}\Big)^n\Big|\mathcal{F}_{t_{k}}\Big]\\
\leq&~ \mathbb{E}\Big[\sum_{n=2}^{\infty}\frac{1}{n!}\Big(C\tau^{1+\epsilon-\gamma}\sum_{i\neq j}\|(-A)^{-\frac{\beta-\gamma}{2}}(-A)^{\frac{\beta-1}{2}}Q^{\frac{1}{2}}e_i\|\|(-A)^{-\frac{\beta-\gamma}{2}}(-A)^{\frac{\beta-1}{2}}Q^{\frac{1}{2}}e_j\|\times\\&\qquad \Big|\frac{\Delta_{k}\beta_i}{\sqrt{\tau}}\frac{\Delta_{k}\beta_j}{\sqrt{\tau}}\Big|\Big)^n\Big|\mathcal{F}_{t_{k}}\Big]\\
\leq&~ \mathbb{E}\Big[\sum_{n=2}^{\infty}\frac{1}{n!}\Big(C\tau^{1+\epsilon-\gamma}\Big(\sum_{i=1}^Ni^{-(\beta-\gamma)}\|(-A)^{\frac{\beta-1}{2}}Q^{\frac{1}{2}}e_i\|
\big|\frac{\Delta_{k}\beta_i}{\sqrt{\tau}}\big|\Big)^2\Big)^n\Big|\mathcal{F}_{t_{k}}\Big]\\
\leq&~ \mathbb{E}\Big[\sum_{n=2}^{\infty}\frac{1}{n!}\big(C\tau^{1+\epsilon-\gamma}\|(-A)^{\frac{\beta-1}{2}}Q^{\frac{1}{2}}\|^2_{\mathcal{L}_2(H)}\sum_{i=1}^Ni^{-2(\beta-\gamma)}(\frac{\Delta_{k}\beta_i}{\sqrt{\tau}})^2\big)^n\Big|\mathcal{F}_{t_{k}}\Big],
\end{align*}
where we use H\"{o}lder's inequality in the last step.
When $\beta>\frac{1}{2},$ we have
\begin{align*}
\mathcal{J}_2\leq&~\mathbb{E}\Big[\sum_{n=2}^{\infty}\frac{1}{n!}\Big(C\tau^{1+\epsilon-\gamma}\sum_{i=1}^Ni^{-2(\beta-\gamma)}\sup_{1\leq i\leq N}\big(\frac{\Delta_{k}\beta_i}{\sqrt{\tau}}\big)^2\Big)^n\Big]
\leq \sum_{n=2}^{\infty}(C\tau^{1+\epsilon-\gamma}N^{\frac{1}{n}})^n.
\end{align*}
When $\beta<\frac{1}{2},$ we have 
\begin{align*}
\mathcal{J}_2\leq &~\mathbb{E}\Big[\sum_{n=2}^{\infty}\frac{1}{n!}\Big[C\tau^{1+\epsilon-\gamma}\big(\sum_{i=1}^Ni^{-2(\beta-\gamma)\times \frac{1}{2\beta-3\gamma}}\big)^{2\beta-3\gamma}\big(\sum_{i=1}^{N}(\frac{\Delta_{k}\beta_i}{\sqrt{\tau}})^{2\times \frac{1}{1-2\beta+3\gamma}}\big)^{1-2\beta+3\gamma}\Big]^n\Big]\\
\leq&~ \sum_{n=2}^{\infty}(C\tau^{1+\epsilon-\gamma}N^{1-2\beta+3\gamma})^n.
\end{align*}
The remaining proof is similar to that of $\mathcal{J}_1$. We omit it.
Hence, we get $\mathcal{R}^{N,1}_{\tau,3}\overset{\mathbb{P}}{\longrightarrow}0\text{ as }\tau\rightarrow0.$

\textit{Estimate of $\mathcal{R}^{N,2}_{\tau, 3}.$}
The term $\mathcal{R}^{N,2}_{\tau, 3}$ can be divided into four terms, we only present the estimate of the low order term, i.e.,
\begin{align*}
&\mathbb{E}\Big|\sum_{k=0}^{m-1}\tau^{\frac{\beta}{2}}\big\langle (S^N(\tau)-\mathrm{Id})X^N_k,D^2\phi_{\tau,\epsilon}(X^N_k)(S^N(\tau)-\mathrm{Id})X^N_k\big\rangle\Big|\\
\leq&~ C\tau^{\frac{\beta}{2}}\sum_{k=0}^{m-1}\tau^{1-\gamma}\tau^{\beta}\mathbb{E}\Big[\|(-A)^{1-\gamma}D^2\phi_{\tau,\epsilon}(X^N_k)\|_{\mathcal{L}(H)}\|(-A)^{\frac{\beta}{2}}X^N_k\|^2\Big]\\
\leq&~ C\tau^{\frac{\beta}{2}-\gamma}\sup_{k\in\mathbb{N}}\mathbb{E}\big(\|X^N_k\|^4+\|X^N_k\|^4_{\beta}+1\big)
\end{align*}
goes to 0 as $\tau\rightarrow0$ for $\gamma<\frac{\beta}{2}.$ The other three terms are high order terms that can be proved similarly, we omit the proofs.

\textit{Estimate of $\mathcal{R}^{N,3}_{\tau,3}$.} By Proposition \ref{propfull} and \eqref{phi2}, 
\begin{align*}
\mathbb{E}|\mathcal{R}^{N,3}_{\tau,3}|=&~\mathbb{E}\Big|\sum_{k=0}^{m-1}\tau^{\frac{\beta}{2}}\big\langle D^2\phi_{\tau,\epsilon}(X^N_k)(S^N(\tau)-\mathrm{Id})X^N_k,S^N(\tau)P^N\Delta W_k\big\rangle\Big|\\
\leq&~ \tau^{\frac{1+3\beta}{2}-\gamma}\sum_{k=0}^{m-1}\mathbb{E}\Big[\|(-A)^{\frac{1-\beta}{2}}D^2\phi_{\tau,\epsilon}(X^N_k)(-A)^{\frac{1+\beta}{2}-\gamma}\|_{\mathcal{L}(H)}\|X^N_k\|_{\beta}\|S^N(\tau)(-A)^{\frac{\beta-1}{2}}\Delta W_k\|\Big]\\
\leq&~ \tau^{\frac{1+3\beta}{2}-\gamma}\sum_{k=0}^{m-1}\Big(\mathbb{E}\Big[\|(-A)^{\frac{1-\beta}{2}}D^2\phi_{\tau,\epsilon}(X^N_k)(-A)^{\frac{1+\beta}{2}-\gamma}\|^2_{\mathcal{L}(H)}\|X^N_k\|^2_{\beta}\Big]\Big)^{\frac{1}{2}}\times\\
&\qquad \qquad \qquad \Big(\mathbb{E}\Big\|\int_{t_k}^{t_{k+1}}S^N(\tau)(-A)^{\frac{\beta-1}{2}}\mathrm{d}W(s)\Big\|^2\Big)^{\frac{1}{2}}\\
\leq&~ C\tau^{\frac{\beta}{2}-\gamma}\Big(\sup_{k\in\mathbb{N}}\mathbb{E}\big[(1+\|X^N_k\|^4)\|X^N_k\|^2_{\beta}\big]\Big)^{\frac{1}{2}}\to0\text{ as }\tau\to0.
\end{align*}
Applying Ch\'ebyshev's inequality leads to that $\mathcal{R}^{N,3}_{\tau,3}\overset{\mathbb{P}}{\longrightarrow}0$ as $\tau\to0$.

\textit{Estimate of $\mathcal{R}^{N,4}_{\tau,3}.$}
Denote \begin{align*}
\tilde{J}_k:=&~2\mu\tau^{1+\frac{\beta}{2}-\epsilon}\Big\langle D^2\phi_{\tau,\epsilon}(X^N_k)S^N(\tau)F^N(X^N_k),S^N(\tau)P^N\frac{\Delta W_k}{\sqrt{\tau}}\Big\rangle\\
&\qquad \quad-2\mu^2\tau^{2+\beta-2\epsilon}\sum_{j=1}^N\big\langle D^2\phi_{\tau,\epsilon}(X^N_k)S^N(\tau)F^N(X^N_k),S^N(\tau)Q^{\frac{1}{2}}e_j\big\rangle^2,
\end{align*}
where $\mu$ is a parameter to be determined.
Using the property of the exponential martingale, we know that
$
\mathbb{E}[\exp\{\tilde{J}_k\}|\mathcal{F}_{t_k}]=1.$
Hence, by iterating, we get
\begin{align*}
\mathbb{E}\Big[\exp\Big\{\sum_{k=0}^{m-1}\tilde{J}_k\Big\}\Big]=\mathbb{E}\Big[\exp\Big\{\sum_{k=0}^{m-2}\tilde{J}_k\Big\}\mathbb{E}\big[\exp\{\tilde{J}_{m-1}\}\big|\mathcal{F}_{m-1}\big]\Big]=1.
\end{align*}
Then H{\"o}lder's inequality yields that
\begin{align*}
&~\mathbb{E}\Big[\exp\Big\{\sum_{k=0}^{m-1}\mu\tau^{1+\frac{\beta}{2}-\epsilon}\Big\langle D^2\phi_{\tau,\epsilon}(X^N_k)S^N(\tau)F^N(X^N_k),S^N(\tau)P^N\frac{\Delta W_k}{\sqrt{\tau}}\Big\rangle\Big\}\Big]\\
\leq&~ \Big(\mathbb{E}\Big[\exp\Big\{\sum_{k=0}^{m-1}\tilde{J}_k\Big\}\Big]\Big)^{\frac{1}{2}}
\Big(\mathbb{E}\exp\Big\{\sum_{k=0}^{m-1}2\mu^2\tau^{2+\beta-2\epsilon}\sum_{j=1}^N\langle D^2\phi_{\tau,\epsilon}(X^N_k)S^N(\tau)F^N(X^N_k),S^N(\tau)Q^{\frac{1}{2}}e_j\rangle^2\Big\}\Big)^{\frac{1}{2}}.
\end{align*}
 These, together with Ch{\'e}byshev's inequality lead to
 \begin{align*}
&\mathbb{P}(-\mathcal{R}^{N,4}_{\tau,3}>a)\\
\leq&~ e^{-a\mu\tau^{-\frac{1}{2}-\epsilon}}\mathbb{E}\exp\Big\{\sum_{k=0}^{m-1}\mu\tau^{1+\frac{\beta}{2}-\epsilon}\Big\langle D^2\phi_{\tau,\epsilon}(X^N_k)S^N(\tau)F^N(X^N_k),S^N(\tau)P^N\frac{\Delta W_k}{\sqrt{\tau}}\Big\rangle\Big\}\\
\leq&~ e^{-a\mu\tau^{-\frac{1}{2}-\epsilon}}\Big(\mathbb{E}\exp\Big\{\sum_{k=0}^{m-1}2\mu^2\tau^{2+\beta-2\epsilon}\sum_{j=1}^{N}\big\langle D^2\phi_{\tau,\epsilon}(X^N_k)S^N(\tau)F^N(X^N_k),S^N(\tau)Q^{\frac{1}{2}}e_j\big\rangle^2\Big\}\Big)^{\frac{1}{2}}.
\end{align*}
By using the fact that $\|(-A)^{\frac{1-\beta}{2}}D^2\phi_{\tau,\epsilon}(x)\|_{\mathcal{L}(H)}\leq C\tau^{-\frac{\beta}{2}+\epsilon}$ and the Lipschitz continuity of $F$, we have
\begin{align}\label{tildeC}
&\mathbb{P}(-\mathcal{R}^{N,4}_{\tau,3}>a)\notag\\
\leq&~ e^{-a\mu\tau^{-\frac{1}{2}-\epsilon}}\Big(\mathbb{E}\Big[\exp\Big\{\sum_{k=0}^{m-1}C\mu^2\tau^{2+\beta-2\epsilon}\tau^{-\beta+2\epsilon}\|F^N(X^N_k)\|^2\|(-A)^{\frac{\beta-1}{2}}Q^{\frac{1}{2}}\|^2_{\mathcal{L}_2(H)}\Big\}\Big]\Big)^{\frac{1}{2}}\notag\\
\leq&~ e^{-a\mu\tau^{-\frac{1}{2}-\epsilon}}\Big(\mathbb{E}\Big[\exp\Big\{\sum_{k=0}^{m-1}C\mu^2\tau^2\|F^N(X^N_k)\|^2\Big\}\Big]\Big)^{\frac{1}{2}}\notag\\
\leq&~ e^{C\|F(0)\|^2}\times e^{-a\mu\tau^{-\frac{1}{2}-\epsilon}}\Big(\mathbb{E}\Big[\exp\Big\{\sum_{k=0}^{m-1}\tilde{C}\mu^2\tau^2\|X^N_k\|^2\Big\}\Big]\Big)^{\frac{1}{2}}.
\end{align}

We claim that $\mathbb{E}\Big[\exp\Big\{\sum_{k=0}^{m-1}\tilde{C}\mu^2\tau^2\|X^N_k\|^2\Big\}\Big]\leq C.$
In fact, denote $$V^N_k:=\int_0^{t_k}S^N(t_k-\lfloor s\rfloor_{\tau})P^N\mathrm{d}W(s),\quad Z^N_k:=X^N_k-V^N_k.$$
Then by \cite[Lemma 4.1]{czh20}, we have 
\begin{align*}
\|Z^N_{k+1}\|^2\leq &~e^{-(\lambda_1-K)\tau}\|Z^N_k\|^2+2(\tau^2+\frac{\tau}{\lambda_1-K})e^{-2\lambda_1\tau}\|F^N(V^N_k)\|^2\\
\leq &~e^{-(\lambda_1-K)\tau}\|Z^N_k\|^2+C\tau(\|V^N_k\|^2+1).
\end{align*}
By Young's inequality $bc\leq \frac{b^p}{p}+\frac{c^q}{q}$ with $p=e^{(\lambda_1-K)\tau}$, the Fernique Theorem (see e.g. \cite[Theorem $2.7$]{Daprato}) and Assumption \ref{assumpX_0}, we obtain that
\begin{align*}
&\quad\mathbb{E}\Big[\exp\{\mu\|Z^N_{k+1}\|^2\}\Big]\\
&\leq e^{-(\lambda_1-K)\tau}\mathbb{E}\Big[\exp\{\mu\|Z^N_k\|^2\}\Big]+(1-e^{-(\lambda_1-K)\tau})\mathbb{E}\Big[\exp\Big\{\frac{C\mu\tau(\|V^N_k\|^2+1)}{1-e^{-(\lambda_1-K)\tau}}\Big\}\Big]\\
&\leq e^{-(\lambda_1-K)\tau (k+1)}\mathbb{E}\Big[\exp\{\mu \|Z^N_0\|^2\}\Big]\\
&\quad+\sum_{j=0}^k e^{-(\lambda_1-K)\tau (k-j)}(1-e^{-(\lambda_1-K)\tau})\mathbb{E}\Big[\exp\Big\{\frac{C\mu\tau(\|V^N_j\|^2+1)}{1-e^{-(\lambda_1-K)\tau}}\Big\}\Big]\\
&\leq \mathbb{E}\Big[\exp\{\mu \|Z^N_0\|^2\}\Big]+C\sum_{j=0}^k e^{-(\lambda_1-K)\tau j}(1-e^{-(\lambda_1-K)\tau})\leq C
\end{align*}
and
\begin{align}\label{estexp}
\mathbb{E}\big[e^{\frac{\mu}{4}\|X^N_{k+1}\|^2}\big]\leq \mathbb{E}\big[e^{\frac{\mu}{2}\|Z^N_{k+1}\|^2+\frac{\mu}{2}\|V^N_{k+1}\|^2}\big]\leq \Big(\mathbb{E}\big[e^{\mu\|Z^N_{k+1}\|^2}\big]\mathbb{E}\big[e^{\mu\|V^N_{k+1}\|^2}\big]\Big)^{\frac{1}{2}}\leq C
\end{align}
hold for all $\mu\leq\mu_1$ independent of $\tau$ with some $\mu_1>0.$
Taking $\mu\leq\min\{\mu_1,(4\tilde{C})^{-1}\}$ with $\tilde{C}$ being the same as that in \eqref{tildeC}, and using \eqref{estexp} and H{\"o}lder's inequality, we get
\begin{align*}
\mathbb{E}\Big[\exp\Big\{\sum_{k=0}^{m-1}\tilde{C}\mu^2\tau^2\|X^N_k\|^2\Big\}\Big]\leq \Pi_{k=0}^{m-1}\Big(\mathbb{E}\Big[\exp\{m\tilde{C}\mu^2\tau^2\|X^N_k\|^2\}\Big]\Big)^{\frac{1}{m}}\leq \Pi_{k=0}^{m-1}C^{\frac{1}{m}}\leq C.
\end{align*} 
The proof of the claim is completed.

By the same argument, we obtain the same bound for $\mathbb{P}(\mathcal{R}^{N,4}_{\tau,3}>a)$. Thus, 
\begin{align*}
\mathbb{P}(|\mathcal{R}^{N,4}_{\tau, 3}|>a)\leq Ce^{-a\mu\tau^{-\frac{1}{2}-\epsilon}}\rightarrow 0\text{ as }\tau\rightarrow 0.
\end{align*}

\textbf{Estimate of $\mathcal{R}^N_{\tau, 4}$.}
Noticing that
\begin{align*}
&\mathcal{R}^N_{\tau,4}=-\tau^{\frac{\beta}{2}}\sum_{k=0}^{m-1}\mathcal{R}^N_{k,\phi_{\tau,\epsilon}}\\
=&-\tau^{\frac{\beta}{2}}\sum_{k=0}^{m-1}\frac{1}{2}\int_0^1(1-t)^2D^3\phi_{\tau,\epsilon}\big(X^N_k+t(X^N_{k+1}-X^N_k)\big)\big(X^N_{k+1}-X^N_k,X^N_{k+1}-X^N_k,X^N_{k+1}-X^N_k\big)\mathrm{d}t
\end{align*}
and 
\begin{align*}
X^N_{k+1}-X^N_k=(S^N(\tau)-\mathrm{Id})X^N_k+\tau S^N(\tau)F^N(X^N_k)+S^N(\tau)P^N\Delta W_k,
\end{align*}
it is natural to split $\mathcal{R}^N_{\tau,4}$ into two terms denoted by $\mathcal{R}^{N,1}_{\tau,4},\mathcal{R}^{N,2}_{\tau,4}$ with
\begin{align*}
\mathcal{R}^{N,1}_{\tau,4}:=&~-\tau^{\frac{\beta}{2}}\sum_{k=0}^{m-1}\frac{1}{2}\int_0^1(1-t)^2D^3\phi_{\tau,\epsilon}\big(X^N_k+t(X^N_{k+1}-X^N_k)\big)\big(S^N(\tau)P^N\Delta W_k,\\
&S^N(\tau)P^N\Delta W_k,S^N(\tau)P^N\Delta W_k\big)\mathrm{d}t
\end{align*}
and $\mathcal{R}^{N,2}_{\tau,4}:=\mathcal{R}^{N}_{\tau,4}-\mathcal{R}^{N,1}_{\tau,4}.$

The estimate of the term $\mathcal{R}^{N,2}_{\tau,4}$ is similar to that of $\mathcal{R}^{N,3}_{\tau,3},$ and we take the estimate of 
\begin{align*}
\mathcal{G}_1:=&~
\sum_{k=0}^{m-1}\frac{\tau^{\frac{\beta}{2}}}{2}\int_0^1(1-t)^2D^3\phi_{\tau,\epsilon}\big(X^N_k+t(X^N_{k+1}-X^N_k)\big)\\
&\qquad \qquad \qquad \big((S^N(\tau)-\mathrm{Id})X^N_k,S^N(\tau)\Delta W_k,S^N(\tau)\Delta W_k\big)\mathrm{d}t
\end{align*}
as an example since the others are high order terms. Precisely, applying \eqref{phi3} with $\delta_1=\beta-\gamma, \,\delta_2=\delta_3=\frac{1-\beta}{2}$ and \cite[Theorem $1.1.9$]{ZGQ} yields that
\begin{align*}
\mathbb{E}|\mathcal{G}_1|
=&~ \frac{\tau^{1+\frac{\beta}{2}}}{2}\mathbb{E}\Big|\sum_{k=0}^{m-1}\int_0^1(1-t)^2D^3\phi_{\tau,\epsilon}\big(X^N_k+t(X^N_{k+1}-X^N_k)\big)\Big((-A)^{\frac{1-\beta}{2}}S^N(\tau)(-A)^{\frac{\beta-1}{2}}\frac{\Delta W_k}{\sqrt{\tau}},\\
 &\qquad \quad (-A)^{\frac{1-\beta}{2}}S^N(\tau)(-A)^{\frac{\beta-1}{2}}\frac{\Delta W_k}{\sqrt{\tau}},(-A)^{\beta-\gamma}(-A)^{-\frac{3\beta}{2}+\gamma}(S^N(\tau)-\mathrm{Id})(-A)^{\frac{\beta}{2}}X^N_k\Big)\mathrm{d}t\Big|\\
\leq&~ C\tau^{1+2\beta-\gamma}\sum_{k=0}^{m-1}\mathbb{E}\Big[(1+\|X^N_k\|^2+\|X^N_{k+1}\|^2)\|X^N_k\|_{\beta}\|S^N(\tau)(-A)^{\frac{\beta-1}{2}}\frac{\Delta W_k}{\sqrt{\tau}}\|^2\Big]\\
\leq&~ C\tau^{\beta-\gamma}\big(\sup_{k\ge 0}\mathbb{E}(1+\|X^N_k\|^6)\sup_{k\ge 0}\mathbb{E}\|X^N_k\|^3_{\beta}\big)^{\frac{1}{3}}\|(-A)^{\frac{\beta-1}{2}}Q^{\frac{1}{2}}\|^2_{\mathcal{L}_2(H)}\rightarrow 0\text{ as }\tau\rightarrow 0.
\end{align*}
By the similar argument, we can get that for each fixed $a>0,$
$$\mathbb{P}(|\mathcal{R}^{N,2}_{\tau,4}|>\frac{a}{2})\leq \frac{2}{a}\mathbb{E}|\mathcal{R}^{N,2}_{\tau,4}|\rightarrow 0\;\text{as }\tau\rightarrow 0.$$

Now we are in a position to prove $\mathbb{P}(|\mathcal{R}^{N,1}_{\tau,4}|>\frac{a}{2})\rightarrow0$ as $\tau\rightarrow0.$
Let $\mathcal{R}^{N,1}_{\tau,4}=\mathcal{R}^{N,1,1}_{\tau,4}+\mathcal{R}^{N,1,2}_{\tau,4},$ where
\begin{align*}
\mathcal{R}^{N,1,1}_{\tau,4}:=&-\tau^{\frac{\beta}{2}}\sum_{k=0}^{m-1}\frac{1}{2}\int_0^1(1-t)^2\Big[D^3\phi_{\tau,\epsilon}(X^N_k+t(X^N_{k+1}-X^N_k))-D^3\phi_{\tau,\epsilon}(X^N_k)\Big]\big(S^N(\tau)P^N\Delta W_k,\\
&\qquad\qquad\qquad S^N(\tau)P^N\Delta W_k,S^N(\tau)P^N\Delta W_k\big)\mathrm{d}t\\
=&-\tau^{\frac{\beta}{2}}\sum_{k=0}^{m-1}\frac{1}{2}\int_0^1(1-t)^2\int_0^1D^4\phi_{\tau,\epsilon}\big(X^N_k+\bar{t}t(X^N_{k+1}-X^N_k)\big)\mathrm{d}\bar{t}\Big(t(X^N_{k+1}-X^N_k),\\
&\qquad\qquad\qquad S^N(\tau)P^N\Delta W_k,S^N(\tau)P^N\Delta W_k,S^N(\tau)P^N\Delta W_k\Big)\mathrm{d}t,
\end{align*}
and 
\begin{align*}
\mathcal{R}^{N,1,2}_{\tau,4}&:=-\tau^{\frac{\beta}{2}}\sum_{k=0}^{m-1}\frac{1}{2}\int_0^1(1-t)^2D^3\phi_{\tau,\epsilon}(X^N_k)\big(S^N(\tau)P^N\Delta W_k,S^N(\tau)P^N\Delta W_k,S^N(\tau)P^N\Delta W_k\big)\mathrm{d}t\\
&=:-\tau^{\frac{\beta}{2}}\sum_{k=0}^{m-1}\mathcal{R}^{N,1,2}_{\tau,4,k}.
\end{align*}
Note that 
$
\mathbb{P}(\mathcal{R}^{N,1,2}_{\tau,4}>\frac{a}{4})\leq e^{-\frac{a}{4}\tau^{-\epsilon}}\mathbb{E}\Big[\exp\Big\{-\tau^{\frac{\beta}{2}-\epsilon}\sum_{k=0}^{m-1}\mathcal{R}^{N,1,2}_{\tau,4,k}\Big\}\Big]$
and 
$\mathbb{E}[\mathcal{R}^{N,1,2}_{\tau,4,k}\big|\mathcal{F}_{t_k}]=0.$
Utilizing \eqref{phi7} and the Burkholder--Davis--Gundy inequality, we get for $k=0,1,\ldots,m-1,$
\begin{align*}
 &~\sum_{n=2}^{\infty}\frac{1}{n!}\mathbb{E}\Big[|\tau^{\frac{\beta}{2}-\epsilon}\mathcal{R}^{N,1,2}_{\tau,4,k}|^n\Big|\mathcal{F}_{t_{k}}\Big]\\
=&~\sum_{n=2}^{\infty}\frac{1}{n!}\mathbb{E}\Big[\Big|\tau^{\frac{\beta}{2}-\epsilon}\frac{1}{2}\int_0^1(1-t)^2D^3\phi_{\tau,\epsilon}(X^N_{k})
\Big((-A)^{\frac{1-\beta}{2}}S^N(\tau)(-A)^{\frac{\beta-1}{2}}P^N\Delta W_{k},\\
&\quad (-A)^{\frac{1-\beta}{2}}S^N(\tau)(-A)^{\frac{\beta-1}{2}}P^N\Delta W_{k},(-A)^{\frac{1-\beta}{2}}S^N(\tau)(-A)^{\frac{\beta-1}{2}}P^N\Delta W_{k}\Big)\mathrm{d}t\Big|^n\Big|\mathcal{F}_{t_{k}}\Big]\\
\leq&~ \sum_{n=2}^{\infty}\frac{1}{n!}\mathbb{E}\Big[\tau^{\frac{\beta}{2}-\epsilon}C\tau^{-\frac{\beta}{2}+\epsilon}\tau^{\frac{3}{2}}\big\|S^N(\tau)(-A)^{\frac{\beta-1}{2}}P^N\frac{\Delta W_{k}}{\sqrt{\tau}}\big\|^3\Big]^n\\
\leq&~ \sum_{n=2}^{\infty}\frac{1}{n!}(C\tau^{\frac{3}{2}}\|(-A)^{\frac{\beta-1}{2}}Q^{\frac{1}{2}}\|^3_{\mathcal{L}_2(H)})^n,
\end{align*}
Take $\tau< \frac{1}{2}$ so that $C\tau^{\frac{1}{2}}\|(-A)^{\frac{\beta-1}{2}}Q^{\frac{1}{2}}\|^3_{\mathcal{L}_2(H)}\leq 1$, then
\begin{align*}
\sum_{n=2}^{\infty}\frac{1}{n!}\mathbb{E}\Big[|\tau^{\frac{\beta}{2}-\epsilon}\mathcal{R}^{N,1,2}_{\tau,4,k}|^n\Big|\mathcal{F}_{t_{k}}\Big]
\leq \sum_{n=2}^{\infty}\frac{\tau^n}{n!}\leq \frac{\tau^2}{2(1-\tau)}
\leq \tau^2\leq \frac{1}{m}.
\end{align*}
Applying Lemma \ref{lemmaT} leads to $\mathbb{P}(\mathcal{R}^{N,1,2}_{\tau,4}>\frac{a}{4})\leq Ce^{-\frac{a}{4}\tau^{-\epsilon}},\;\epsilon\in (\max\{0,\frac{\alpha-1+\beta}{2}\},\frac{\beta}{2}).$ Similar argument yields the same bound for $\mathbb{P}(-\mathcal{R}^{N,1,2}_{\tau,4}>\frac{a}{4})$.

The term $\mathcal{R}^{N,1,1}_{\tau,4}$ can be divided into three subterms.  Because the other subterms can be proved similarly, we only prove the subterm
\begin{align*}
&\mathcal{G}_2:=\tau^{\frac{\beta}{2}}\sum_{k=0}^{m-1}\int_0^1(1-t)^2\int_0^1D^4\phi_{\tau,\epsilon}\big(X^N_k+\bar{t}t(X^N_{k+1}-X^N_k)\big)\mathrm{d}\bar{t}\\
&\qquad\qquad\qquad\quad\Big(tS^N(\tau)P^N\Delta W_k,S^N(\tau)P^N\Delta W_k,S^N(\tau)P^N\Delta W_k,
S^N(\tau)P^N\Delta W_k\Big)\mathrm{d}t.
\end{align*}

 For $\beta>\frac{1}{2},$
 applying \eqref{phi4} with $\delta_i=\frac{1-\beta}{2},i=1,\ldots,4,$ we obtain
\begin{align*}
\mathbb{E}|\mathcal{G}_2|
\leq&~ C\tau^{\frac{\beta}{2}}\sum_{k=0}^{m-1}\mathbb{E}\Big[(1+\|X^N_k\|^2+\|X^N_{k+1}\|^2)\Big\|\int_{t_k}^{t_{k+1}}S^N(\tau)(-A)^{\frac{\beta-1}{2}}\mathrm{d}W(s)\Big\|^4\Big]
\leq C\tau^{1-\frac{\beta}{2}}
\end{align*}
converges to $0$ as $\tau\to0.$
For $\beta\in(0,\frac{1}{2}],$ applying \eqref{phi4} with $\delta_1=\delta_2=\frac{1-\beta}{2},\,\delta_3=\delta_4=0$, we get the upper bound $C\tau^{1-\frac{\beta}{2}}N^{2(1-\beta)}$ for $\mathbb{E}|\mathcal{G}_2|$. Since $\alpha<\frac{1}{2},$ we have $\tau^{1-\frac{\beta}{2}}N^{2(1-\beta)}\leq 2\tau^{1-\frac{\beta}{2}-2\alpha(1-\beta)}\leq 2\tau^{\frac{\beta}{2}}.$
Hence, $\mathbb{E}|\mathcal{G}_2|\leq C\tau^{\frac{\beta}{2}}\to0\text{ as }\tau\to0$ for $\beta\in(0,\frac{1}{2}].$

\textbf{Estimate of $\mathcal{R}^N_{\tau, 5}$.}
The mild form of $X^{N,\tau}_t$ implies that for $t\in(t_k,t_{k+1}],$ we have
\begin{align*}
X^{N,\tau}_t-X^N_k=(S^N(t-t_k)-\mathrm{Id})X^N_k+(t-t_k)S^N(t-t_k)F^N(X^N_k)+S^N(t-t_k)(W(t)-W(t_k)).
\end{align*}
Therefore, $\mathcal{R}^{N}_{\tau,5}=\sum_{i=1}^3\mathcal{R}^{N,i}_{\tau,5}$
with
\begin{align*}
&\mathcal{R}^{N,1}_{\tau,5}:=\sum_{k=0}^{m-1}\Big\langle \int_{t_k}^{t_{k+1}}A^N(S^N(t-t_k)-\mathrm{Id})X^N_k\mathrm{d}t,D\phi_{\tau,\epsilon}(X^N_k)\Big\rangle\\
&\mathcal{R}^{N,2}_{\tau,5}:=\sum_{k=0}^{m-1}\Big\langle \int_{t_k}^{t_{k+1}}A^N(t-t_k)S^N(t-t_k)F^N(X^N_k)\mathrm{d}t,D\phi_{\tau,\epsilon}(X^N_k)\Big\rangle
\end{align*}
and
\begin{align*}
\mathcal{R}^{N,3}_{\tau,5}:=\sum_{k=0}^{m-1}\Big\langle \int_{t_k}^{t_{k+1}}A^NS^N(t-t_k)(W(t)-W(t_k))\mathrm{d}t,D\phi_{\tau,\epsilon}(X^N_k)\Big\rangle.
\end{align*}

 Taking $\nu=\frac{2\gamma}{1-2\alpha}>0,$ where we use $\alpha<\frac{1}{2},$ we obtain
\begin{align*}
\mathbb{E}|\mathcal{R}^{N,1}_{\tau,5}|=&~\tau^{\frac{\beta}{2}}\mathbb{E}\Big|\sum_{k=0}^{m-1}\Big\langle \int_{t_k}^{t_{k+1}}A^N(S^N(t-t_k)-\mathrm{Id})X^N_k\mathrm{d}t,D\phi_{\tau,\epsilon}(X^N_k)\Big\rangle\Big|\\
\leq&~ \tau^{\frac{\beta}{2}}\mathbb{E}\Big[\sum_{k=0}^{m-1}\int_{t_k}^{t_{k+1}}\|(S^N(t-t_k)-\mathrm{Id})(-A)^{-\frac{\beta}{2}-\nu+\gamma}\|\|X^N_k\|_{\beta+2\nu}\|(-A)^{1-\gamma}D\phi_{\tau,\epsilon}(X^N_k)\|\mathrm{d}t\Big]\\
\leq&~ C\tau^{\nu-\gamma}N^{2\nu}\sup_{k\ge 0}\mathbb{E}\Big[\|X^N_k\|_{\beta}(1+\|X^N_k\|^2)\Big]
\leq C\tau^{\nu-\gamma-2\nu\alpha}=C\tau^{\gamma}\to 0\text{ as }\tau\to0.
\end{align*}
By \eqref{phi1} with $\delta_1=1-\gamma$, we have
\begin{align*}
 \mathbb{E}|\mathcal{R}^{N,2}_{\tau,5}|&\leq \tau^{\frac{\beta}{2}}\mathbb{E}\Big[\sum_{k=0}^{m-1}\int_{t_k}^{t_{k+1}}(t-t_k)\|(-A)^{\gamma}S^N(t-t_k)\|_{\mathcal{L}(H)}\|F^N(X^N_k)\|\mathrm{d}t \|(-A)^{1-\gamma}D\phi_{\tau,\epsilon}(X^N_k)\| \Big]\\
 &\leq C\tau^{\frac{\beta}{2}}\mathbb{E}\Big[\sum_{k=0}^{m-1}\int_{t_k}^{t_{k+1}}(t-t_k)^{1-\gamma}\mathrm{d}t\|F^N(X^N_k)\|\|(-A)^{1-\gamma}D\phi_{\tau,\epsilon}(X^N_k)\|\Big]\\
 &\leq C\tau^{1-\frac{\beta}{2}-\gamma}\sup_{k\ge 0}\mathbb{E}(1+\|X^N_k\|^3)\to 0\text{ as }\tau\to0.
 \end{align*}
As for the term $\mathcal{R}^{N,3}_{\tau,5},$
 \begin{align*}
 \mathbb{P}(\mathcal{R}^{N,3}_{\tau,5}>a)&\leq e^{-a\tau^{-\epsilon}}\mathbb{E}\Big[\exp\Big\{\sum_{k=0}^{m-1}\Big\langle \tau^{\frac{\beta}{2}-\epsilon}\int_{t_k}^{t_{k+1}}A^NS^N(t-t_k)(W(t)-W(t_k))\mathrm{d}t,
 D\phi_{\tau,\epsilon}(X^N_k)\Big\rangle\Big\}\Big]\\
 &=:e^{-a\tau^{-\epsilon}}\mathbb{E}\Big[\exp\Big\{\sum_{k=0}^{m-1}\mathcal{R}^{N,3}_{\tau,5,k}\Big\}\Big].
  \end{align*}
Note $\mathbb{E}[\mathcal{R}^{N,3}_{\tau,5,k}\big|\mathcal{F}_{t_k}]=0$.
Applying H\"{o}lder's inequality, the Burkholder--Davis--Gundy inequality and \eqref{phi1}, we get for $k=0,1,\ldots,m-1,$
 \begin{align*}
&\sum_{n=2}^{\infty}\frac{1}{n!}\mathbb{E}\Big[|\mathcal{R}^{N,3}_{\tau,5,k}|^n\Big|\mathcal{F}_{t_{k}}\Big]\\
\leq&~ \sum_{n=2}^{\infty}\frac{1}{n!}\mathbb{E}\Big[\Big(\tau^{\frac{\beta}{2}-\epsilon}\Big\|\int_{t_{k}}^{t_{k+1}}(-A)^{\gamma}S^N(t-t_{k})(W(t)-W(t_{k}))\mathrm{d}t\Big\|%\times\\
%&\qquad\qquad\qquad 
\|(-A)^{1-\gamma}D\phi_{\tau,\epsilon}(X^N_{k})\|\Big)^n\Big|\mathcal{F}_{t_{k}}\Big]\\
\leq&~ \sum_{n=2}^{\infty}\frac{1}{n!}\mathbb{E}\Big[C^n\Big\|\int_{t_{k}}^{t_{k+1}}\int_{t_{k}}^t(-A)^{\frac{1-\beta}{2}+\gamma}S^N(t-t_{k})(-A)^{\frac{\beta-1}{2}}\mathrm{d}W(s)\mathrm{d}t\Big\|^n\Big]\\
\leq&~ \sum_{n=2}^{\infty}\frac{1}{n!}C^n\tau^{n-1}\int_{t_{k}}^{t_{k+1}}\Big(\int_{t_{k}}^t\|(-A)^{\frac{1-\beta}{2}+\gamma}S^N(t-t_{k})\|^2_{\mathcal{L}(H)}\|(-A)^{\frac{\beta-1}{2}}Q^{\frac{1}{2}}\|^2_{\mathcal{L}_2(H)}\mathrm{d}s\Big)^{\frac{n}{2}}\mathrm{d}t\\
\leq&~ \sum_{n=2}^{\infty}\frac{1}{n!}C^n\|(-A)^{\frac{\beta-1}{2}}Q^{\frac{1}{2}}\|^n_{\mathcal{L}_2(H)}\tau^{n-1}\int_{t_{k}}^{t_{k+1}}(t-t_{k})^{\frac{n(\beta-2\gamma)}{2}}\mathrm{d}t\\
\leq&~ \sum_{n=2}^{\infty}\frac{1}{n!}(C\|(-A)^{\frac{\beta-1}{2}}Q^{\frac{1}{2}}\|_{\mathcal{L}_2(H)}\tau^{\frac{\beta}{2}-\gamma+1}\big)^n\leq  \frac{1}{m},
 \end{align*}
 where we take $0<\tau<\frac{1}{2}$ such that $C\|(-A)^{\frac{\beta-1}{2}}Q^{\frac{1}{2}}\|_{\mathcal{L}_2(H)}\tau^{\frac{\beta}{2}-\gamma}\leq 1$.
 Hence, by Lemma \ref{lemmaT}, we have $\mathbb{P}(\mathcal{R}^{N,3}_{\tau,5}>a)\rightarrow 0$ as $\tau\rightarrow 0.$ Similar argument leads to $\mathbb{P}(-\mathcal{R}^{N,3}_{\tau,5}>a)\rightarrow 0$ as $\tau\rightarrow 0.$
 
Combining the above estimates, we finish the proof.
\end{proof}

\subsection{Proof of Proposition \ref{prop4}}
In this subsection, we present the proof of Proposition \ref{prop4}, based on the regularity estimates of Poisson's equation \eqref{GammaNtau} associated to \eqref{semidiscrete} that are given in subsection \ref{poissontype}.
\begin{proof}[Proof of Proposition \ref{prop4}.]
By noting that $\pi(\phi_{\tau,\epsilon})=0,$ we get
\begin{align*}
&~\mathbb{E}\Big|\frac{1}{m}\sum_{k=0}^{m-1}\phi_{\tau,\epsilon}(X^N_k)\Big|=\mathbb{E}\Big|\frac{1}{m}\sum_{k=0}^{m-1}\Big(\phi_{\tau,\epsilon}(X^N_k)-\pi (\phi_{\tau,\epsilon})\Big)\Big|\\
\leq&~ \mathbb{E}\Big|\frac{1}{m}\sum_{k=0}^{m-1}\Big(\phi_{\tau,\epsilon}(X^N_k)-\pi^N(\phi_{\tau,\epsilon})\Big)\Big|+\big|\pi^N(\phi_{\tau,\epsilon})-\pi (\phi_{\tau,\epsilon})\big|.
\end{align*}
For $\varphi\in\mathcal{C}^2_b(H_N;\mathbb{R}),$ define $\mathcal{L}^{N,\tau}_t$ as  
\begin{align*}
&\mathcal{L}^{N,\tau}_t\varphi(x):=\big\langle A^Nx+S^N(t-\lfloor t\rfloor_{\tau})F^N(X^{N,\tau}_{\lfloor t\rfloor_{\tau}}),D\varphi(x)\big\rangle+\frac{1}{2}\mathrm{Tr}(P^NQ^{\frac{1}{2}}S^N(2(t-\lfloor t\rfloor_{\tau}))Q^{\frac{1}{2}}D^2\varphi(x)).
\end{align*}
Applying It$\mathrm{\hat{o}}$'s formula to $\Gamma^N_{\tau,\epsilon}$ yileds that
\begin{align*}
&~\quad\Gamma^N_{\tau,\epsilon}(X^N_{k+1})-\Gamma^N_{\tau,\epsilon}(X^N_k)\\
&=\int_{t_k}^{t_{k+1}}\langle A^NX^{N,\tau}_t+S^N(t-\lfloor t\rfloor_{\tau})F^N(X^{N,\tau}_{\lfloor t\rfloor_{\tau}}),D\Gamma^N_{\tau,\epsilon}(X^{N,\tau}_t)\rangle\mathrm{d}t\\
&\quad+\int_{t_k}^{t_{k+1}}\big\langle D\Gamma^N_{\tau,\epsilon}(X^{N,\tau}_t),S^N(t-\lfloor t\rfloor_{\tau})\mathrm{d}W(t)\big\rangle+\int_{t_k}^{t_{k+1}}\frac{1}{2}\mathrm{Tr}(P^NQ^{\frac{1}{2}}S^N(2(t-\lfloor t\rfloor_{\tau}))Q^{\frac{1}{2}}D^2\Gamma^N_{\tau,\epsilon}(X^{N,\tau}_t))\mathrm{d}t\\
&=\int_{t_k}^{t_{k+1}}(\mathcal{L}^{N,\tau}_t-\mathcal{L}^N)\Gamma^N_{\tau,\epsilon}(X^{N,\tau}_t)\mathrm{d}t+\int_{t_k}^{t_{k+1}}\mathcal{L}^N\Gamma^N_{\tau,\epsilon}(X^{N,\tau}_t)\mathrm{d}t+\int_{t_k}^{t_{k+1}}\big\langle D\Gamma^N_{\tau,\epsilon}(X^{N,\tau}_t),S^N(t-\lfloor t\rfloor_{\tau})\mathrm{d}W(t)\big\rangle\\
&=\int_{t_k}^{t_{k+1}}(\mathcal{L}^{N,\tau}_t-\mathcal{L}^N)\Gamma^N_{\tau,\epsilon}(X^{N,\tau}_t)\mathrm{d}t+\tau(\phi_{\tau,\epsilon}(X^N_k)-\pi^N(\phi_{\tau,\epsilon}))+\int_{t_k}^{t_{k+1}}\phi_{\tau,\epsilon}(X^{N,\tau}_t)-\phi_{\tau,\epsilon}(X^N_k)\mathrm{d}t\\
&\quad+\int_{t_k}^{t_{k+1}}\big\langle D\Gamma^N_{\tau,\epsilon}(X^{N,\tau}_t),S^N(t-\lfloor t\rfloor_{\tau})\mathrm{d}W(t)\big\rangle,
\end{align*}
where in the last step we use the fact that $\Gamma^N_{\tau,\epsilon}$ satisfies \eqref{GammaNtau}.

Summing over $k=0,\ldots,m-1$ and multiplying by $\tau^{\beta}$ on both sides of the above equation, we obtain
\begin{align*}
&m^{-1}\sum_{k=0}^{m-1}\Big(\phi_{\tau,\epsilon}(X^N_k)-\pi^N(\phi_{\tau,\epsilon})\Big)\\
=~&~\tau^{\beta}\big(\Gamma^N_{\tau,\epsilon}(X^N_m)-\Gamma^N_{\tau,\epsilon}(X^N_0)\big)-\tau^{\beta}\sum_{k=0}^{m-1}\int_{t_k}^{t_{k+1}}\phi_{\tau,\epsilon}(X^{N,\tau}_t)
-\phi_{\tau,\epsilon}(X^N_k)\mathrm{d}t\\
&-\tau^{\beta}\sum_{k=0}^{m-1}\int_{t_k}^{t_{k+1}}(\mathcal{L}^{N,\tau}_t-\mathcal{L}^N)\Gamma^N_{\tau,\epsilon}(X^{N,\tau}_t)\mathrm{d}t-\tau^{\beta}\sum_{k=0}^{m-1}\int_{t_k}^{t_{k+1}}\big\langle D\Gamma^N_{\tau,\epsilon}(X^{N,\tau}_t),S^N(t-\lfloor t\rfloor_{\tau})\mathrm{d}W(t)\big\rangle\\
=:&~\sum_{i=1}^{4}\mathcal{I}_i.
\end{align*}

By Lemma \ref{lemmaphi} (\romannumeral3), the term $\mathcal{I}_1$
 can be estimated as
$$\mathbb{E}|\mathcal{I}_1|\leq C\tau^{\beta}(1+\sup_{k\ge 0}\mathbb{E}\|X^N_k\|^3)\leq C\tau^{\beta}\to0\text{ as }\tau\to0.$$
For the term $\mathcal{I}_2,$ by the mean value theorem, \eqref{phi1} and H\"{o}lder's continuity of $X^{N,\tau}_t$, we obtain
\begin{align*}
\mathbb{E}|\mathcal{I}_2|&\leq C\tau^{\beta}\sum_{k=0}^{m-1}\int_{t_k}^{t_{k+1}}\mathbb{E}\Big[\Big\|\int_0^1D\phi_{\tau,\epsilon}(X^{N,\tau}_t+(1-\theta)(X^N_k-X^{N,\tau}_t))\mathrm{d}\theta\Big\|\|X^{N,\tau}_t-X^N_k\|\Big]\mathrm{d}t\\
&\leq C\tau^{\beta}\sum_{k=0}^{m-1}\int_{t_k}^{t_{k+1}}\big(\sup_{t\ge 0}\mathbb{E}(1+\|X^{N,\tau}_t\|^4)\big)^{\frac{1}{2}}(\mathbb{E}\|X^{N,\tau}_t-X^N_k\|^2)^{\frac{1}{2}}\mathrm{d}t\leq C\tau^{\frac{\beta}{2}}\to0\text{ as }\tau\to0.
\end{align*}
For the term $\mathcal{I}_4,$
\begin{align*}
\mathbb{E}|\mathcal{I}_4|^2&\leq \tau^{2\beta}\int_{0}^{t_m}\mathbb{E}\Big[\sum_{j=1}^N\langle D\Gamma^N_{\tau,\epsilon}(X^{N,\tau}_t),S^N(t-\lfloor t\rfloor_{\tau})Q^{\frac{1}{2}}e_j\rangle ^2\Big]\mathrm{d}t\\
&\leq C\tau^{2\beta}\int_{0}^{t_m}\mathbb{E}\|(-A)^{\frac{1-\beta}{2}}D\Gamma^N_{\tau,\epsilon}(X^{N,\tau}_t)\|^2\sum_{j=1}^N\|(-A)^{\frac{\beta-1}{2}}Q^{\frac{1}{2}}e_j\|^2\mathrm{d}t\leq C\tau^{\beta}\to0\text{ as }\tau\to0.
\end{align*}
For the term $\mathcal{I}_3,$
\begin{align*}
\mathbb{E}|\mathcal{I}_3|&\leq \tau^{\beta}\sum_{k=0}^{m-1}\int_{t_k}^{t_{k+1}}\mathbb{E}\Big[\Big|\big\langle S^N(t-\lfloor t\rfloor_{\tau})F^N(X^{N,\tau}_{\lfloor t\rfloor_{\tau}})-F^N(X^{N,\tau}_t),D\Gamma^N_{\tau,\epsilon}(X^{N,\tau}_t)\big\rangle\Big|\\
&\quad+\frac{1}{2}\Big|\mathrm{Tr}(P^NQ^{\frac{1}{2}}(S^N(2(t-\lfloor t\rfloor_{\tau}))-\mathrm{Id})Q^{\frac{1}{2}}D^2\Gamma^N_{\tau,\epsilon}(X^{N,\tau}_t))\Big|\Big]\mathrm{d}t=:\mathcal{I}^1_3+\mathcal{I}^2_3.
\end{align*}
H\"{o}lder's continuity of $X^{N,\tau}_t$ and the regularity estimate of $D\Gamma^N_{\tau,\epsilon}$ imply that $\mathcal{I}^1_3\leq C\tau^{\frac{\beta}{2}}\to0\text{ as }\tau\to0.$ 
 Note that the assumption that $A$ commutes with $Q$ leads to
\begin{align*}
&\quad\mathrm{Tr}((P^NQ^{\frac{1}{2}}(S^N(2(t-\lfloor t\rfloor_{\tau}))-\mathrm{Id})Q^{\frac{1}{2}}D^2\Gamma^N_{\tau,\epsilon}(X^{N,\tau}_t)))\\
&=\mathrm{Tr}((S^N(2(t-\lfloor t\rfloor_{\tau}))-\mathrm{Id})Q^{\frac{1}{2}}D^2\Gamma^N_{\tau,\epsilon}(X^{N,\tau}_t)P^NQ^{\frac{1}{2}})\\
&=\mathrm{Tr}((-A)^{-\beta+\gamma}(S^N(2(t-\lfloor t\rfloor_{\tau}))-\mathrm{Id})(-A)^{\frac{\beta-1}{2}}Q^{\frac{1}{2}}(-A)^{\frac{1+\beta}{2}-\gamma}D^2\Gamma^N_{\tau,\epsilon}(X^{N,\tau}_t)(-A)^{\frac{1-\beta}{2}}P^N(-A)^{\frac{\beta-1}{2}}Q^{\frac{1}{2}}).
\end{align*}
By \eqref{phi2} and \cite[(2.2), (2.3) and (2.4)]{czh20},
the term $\mathcal{I}^2_3$ can be estimated as
\begin{align*}
\mathcal{I}^2_3\leq&~ \frac{1}{2}\tau^{\beta}\sum_{k=0}^{m-1}\int_{t_k}^{t_{k+1}}\|(-A)^{-\beta+\gamma}(S^N(2(t-\lfloor t\rfloor _{\tau}))-\mathrm{Id})\|_{\mathcal{L}(H)}\|(-A)^{\frac{\beta-1}{2}}Q^{\frac{1}{2}}\|^2_{\mathcal{L}_2(H)}\times\\
&\qquad\qquad\qquad\mathbb{E}\|(-A)^{\frac{1+\beta}{2}-\gamma}D^2\phi_{\tau,\epsilon}(X^{N,\tau}_t)(-A)^{\frac{1-\beta}{2}}\|_{\mathcal{L}(H)}\mathrm{d}t\\
\leq&~ C\tau^{\beta-\gamma}\to0\text{ as }\tau\to0.
\end{align*}

Moreover, by Remark \ref{remark2} and \eqref{phi1}, we have $|\pi^N(\phi_{\tau,\epsilon})-\pi(\phi_{\tau,\epsilon})|\leq CN^{-\beta}.$ Therefore, we get
\begin{align*}
\mathbb{E}|\tilde{\mathcal{R}}^N_{\tau}|=\mathbb{E}\Big|\tau^{-\epsilon}m^{-1}\sum_{k=0}^{m-1}\phi_{\tau,\epsilon}(X^N_k)\Big|\leq C\tau^{-\epsilon}(N^{-\beta}+\tau^{\frac{\beta}{2}}).
\end{align*}
Since $\epsilon$ satisfies that $\epsilon<\beta\alpha,$ we get the desired result.
The proof is completed.
\end{proof}

\section{Appendix}
In this section, we list proofs of Propositions \ref{propfull} and \ref{propcon}.
\subsection{Proof of Proposition \ref{propfull}}
\begin{proof}
For the existence and uniqueness of the invariant measure $\pi^N_{\tau}$, we refer to \cite[Theorem 4.7]{czh20}.
Similarly to the proof of \cite[Lemma $4.1$]{czh20},
$$\sup_{k\in\mathbb{N}}\mathbb{E}\|X^N_k\|^2\leq C(\|(-A)^{\frac{\beta-1}{2}}Q^{\frac{1}{2}}\|_{\mathcal{L}_2(H)},K)(1+\mathbb{E}\|X^N_0\|^2)$$
can be proved by applying \eqref{semigroup3}. 
We show by induction that the $p$th moment of the numerical solution of the full discretization is bounded for $p\ge 2$ being an integer, since the non-integer case can be obtained by H\"{o}lder's inequality.
Assume that $$\sup_{k\in\mathbb{N}}\mathbb{E}\|X^N_{k}\|^{2(p-1)}\leq C(\|(-A)^{\frac{\beta-1}{2}}Q^{\frac{1}{2}}\|_{\mathcal{L}_2(H)},K,p-1)(1+\mathbb{E}\|X^N_0\|^{2(p-1)}).$$ Now we prove $$\sup_{k\in\mathbb{N}}\mathbb{E}\|X^N_{k}\|^{2p}\leq C(\|(-A)^{\frac{\beta-1}{2}}Q^{\frac{1}{2}}\|_{\mathcal{L}_2(H)},K,p)(1+\mathbb{E}\|X^N_0\|^{2p}).$$

Let $V^N_k:=\int_0^{t_k}S^N(t_k-\lfloor s\rfloor_{\tau})P^N\mathrm{d}W(s), Z^N_k:=X^N_k-V^N_k.$ Then
\begin{align*}
Z^N_{k+1}=S^N(\tau)Z^N_k+S^N(\tau)F^N(Z^N_k+V^N_k)\tau,\quad Z^N_0=X^N_0.
\end{align*} 
By the Burkholder--Davis--Gundy inequality and \eqref{semigroup1}, we get
\begin{align*}
\mathbb{E}\|V^N_k\|^{2p}&\leq \Big(C(p)\int_0^{t_k}\mathbb{E}\|S^N(t_k-\lfloor s\rfloor_{\tau})Q^{\frac{1}{2}}\|^2_{\mathcal{L}_2(H)}\mathrm{d}s\Big)^p\\
&\leq \Big(C(p)\int_0^{t_k}\|(-A)^{\frac{1-\beta}{2}}S^N(t_k-\lfloor s\rfloor_{\tau})\|^2_{\mathcal{L}(H)}\|(-A)^{\frac{\beta-1}{2}}Q^{\frac{1}{2}}\|^2_{\mathcal{L}_2(H)}\mathrm{d}s\Big)^p\\
&\leq \Big(C(p)\int_0^{t_k}(t_k-\lfloor s\rfloor_{\tau})^{\beta-1}e^{-\lambda_1(t_k-\lfloor s\rfloor_{\tau})}\mathrm{d}s\Big)^p\leq C(p).
\end{align*} 
Hence, by Young's inequality, we obtain
\begin{align*}
&\mathbb{E}\|Z^N_{k+1}\|^{2p}\\
\leq&~ e^{-2\lambda_1\tau p}\mathbb{E}\Big(\|Z^N_k\|^2+2\tau\langle Z^N_k,F^N(Z^N_k+V^N_k)-F^N(V^N_k)\rangle+2\tau\langle Z^N_k,F^N(V^N_k) \rangle\\
&+2\tau^2\|F^N(Z^N_k+V^N_k)-F^N(V^N_k)\|^2+2\tau^2\|F^N(V^N_k)\|^2\Big)^{p}\\
\leq&~ e^{-2\lambda_1\tau p}\mathbb{E}\Big((1+2\tau K+2\tau\gamma_1+2\tau^2L_F^2)\|Z^N_k\|^2+2(\tau^2+C(\gamma_1)\tau)\|F^N(V^N_k)\|^2\Big)^p\\
\leq&~ e^{-2\lambda_1\tau p}(1+2\tau K+2\tau \gamma_1+2\tau^2L^2_F)^p\mathbb{E}\|Z^N_k\|^{2p}\\
&+e^{-2\lambda_1\tau p}\sum_{j=0}^{p-1}C^p_j\mathbb{E}\Big[\big((1+2\tau K+2\tau \gamma_1+2\tau^2L^2_F)\|Z^N_k\|^{2}\big)^{j}\big(2\tau(\tau+C(\gamma_1))\|F(V^N_k)\|^2\big)^{p-j}\Big],
\end{align*}
where $\gamma_1>0$ is a parameter to be determined and $C^p_j$ is the combinatorial number. By taking $\gamma_1=\frac{\lambda_1-K}{4}$ and noticing 
$
2\tau^2L^2_F\leq \frac{\lambda_1-K}{2}\tau,
$
we get $1+2\tau K+2\tau\gamma_1+2\tau^2L_F^2\leq e^{(\lambda_1+K)\tau}.$ 
Let $\bar{\gamma}_1>0$ be a constant to be determined.
Applying Young's inequality again leads to
\begin{align*}
&~\mathbb{E}\|Z^N_{k+1}\|^{2p}\\
\leq&~ e^{-2\lambda_1\tau p}\Big[e^{(\lambda_1+K)\tau p}\mathbb{E}\|Z^N_k\|^{2p}+\bar{\gamma}_1\tau e^{(\lambda_1+K)\tau p}\mathbb{E}\|Z^N_k\|^{2p}
+C(\bar{\gamma}_1)\tau (\tau+C(\gamma_1))^p\mathbb{E}\|F(V^N_k)\|^{2p}\\
&
\qquad\qquad +\sum_{j=0}^{p-2}C^p_j\mathbb{E}\big[\big(e^{(\lambda_1+K)\tau}\|Z^N_k\|^{2}\big)^{j}
\big(2\tau(\tau+C(\gamma_1))\|F(V^N_k)\|^2\big)^{p-j}\big]\Big].
\end{align*}
Taking $\bar{\gamma}_1=\frac{(\lambda_1-K)p}{2}$, by the induction assumption and Assumption \ref{assumpX_0}, we have for $p\ge 2,$
\begin{align*}
&\quad \mathbb{E}\|Z^N_{k+1}\|^{2p}\\
&\leq e^{-\frac{\lambda_1-K}{2}\tau p}\mathbb{E}\|Z^N_k\|^{2p}+C(\bar{\gamma}_1)\tau e^{-2\lambda_1\tau p}(\tau+C(\gamma_1))^p\mathbb{E}\|F(V^N_k)\|^{2p}\\
&\quad+C(p)\tau e^{-2\lambda_1\tau p}\sum_{j=0}^{p-2}e^{(\lambda_1+K)\tau j}\mathbb{E}\big[\|Z^N_k\|^{2j}(\|V^N_k\|^{2(p-j)}+1)\big]\\
&\leq e^{-\frac{\lambda_1-K}{2}\tau p}\mathbb{E}\|Z^N_k\|^{2p}+C(\|(-A)^{\frac{\beta-1}{2}}Q^{\frac{1}{2}}\|_{\mathcal{L}_2(H)},K,p)(1+\mathbb{E}\|X^N_0\|^{2p})\tau e^{-(\lambda_1-K)\tau p}\\
&\leq e^{-\frac{\lambda_1-K}{2}\tau p(k+1)}\mathbb{E}\|Z^N_0\|^{2p}+C(\|(-A)^{\frac{\beta-1}{2}}Q^{\frac{1}{2}}\|_{\mathcal{L}_2(H)},K,p)(1+\mathbb{E}\|X^N_0\|^{2p})\tau\sum_{j=0}^{k}e^{-\frac{\lambda_1-K}{2}\tau p j}\\
&\leq C(\|(-A)^{\frac{\beta-1}{2}}Q^{\frac{1}{2}}\|_{\mathcal{L}_2(H)},K,p)(1+\mathbb{E}\|X^N_0\|^{2p}).
\end{align*}
Hence, $\mathbb{E}\|X^N_{k+1}\|^{2p}\leq C(\|(-A)^{\frac{\beta-1}{2}}Q^{\frac{1}{2}}\|_{\mathcal{L}_2(H)},K,p)(1+\mathbb{E}\|X^N_0\|^{2p}).$
By the mild form of the full discretization, Minkowski's inequality, the Burkholder--Davis--Gundy inequality and \eqref{semigroup3}, we have
\begin{align*}
&~\|(-A)^{\frac{\beta}{2}}X^{N,\tau}_t\|_{L^{2p}(\Omega;H)}\\\leq&~ \|(-A)^{\frac{\beta}{2}}S^N(t)X^N_0\|_{L^{2p}(\Omega;H)}+\int_0^{t}\|(-A)^{\frac{\beta}{2}}S^N(t-\lfloor s\rfloor_{\tau})F^N(X^{N,\tau}_{\lfloor s\rfloor_{\tau}})\|_{L^{2p}(\Omega;H)}\mathrm{d}s\\
&+\big\|\int_0^{t}(-A)^{\frac{\beta}{2}}S^N(t-\lfloor s\rfloor_{\tau})\mathrm{d}W(s)\big\|_{L^{2p}(\Omega;H)}\\
\leq&~ \|X^N_0\|_{L^{2p}(\Omega;\dot{H}^{\beta})}+\int_0^{t}\|(-A)^{\frac{\beta}{2}}S^N(t-\lfloor s\rfloor_{\tau})\|_{\mathcal{L}(H)}\|F^N(X^{N,\tau}_{\lfloor s\rfloor_{\tau}})\|_{L^{2p}(\Omega;H)}\mathrm{d}s\\
&+C(p)\Big(\int_0^{t}\|(-A)^{\frac{1}{2}}S^N(t-\lfloor s\rfloor_{\tau})\|^2_{\mathcal{L}(H)}\|(-A)^{\frac{\beta-1}{2}}Q^{\frac{1}{2}}\|^2_{\mathcal{L}_2(H)}\mathrm{d}s\Big)^{\frac{1}{2}}\\\leq&~ C(\|(-A)^{\frac{\beta-1}{2}}Q^{\frac{1}{2}}\|_{\mathcal{L}_2(H)},K,p)(1+\|X^N_0\|_{L^{2p}(\Omega;\dot{H}^{\beta})}).
\end{align*}

Now we prove the H\"{o}lder continuity of $X^{N,\tau}_t.$ By \eqref{contiversion}, 
we have
\begin{align*}
\mathbb{E}\|X^{N,\tau}_t-X^{N,\tau}_s\|^2&\leq C(t-s)^{\beta}\mathbb{E}\|X^{N,\tau}_s\|^2_{\beta}+C(t-s)\int_s^t\mathbb{E}(1+L^2_F\|X^{N,\tau}_{\lfloor r\rfloor_{\tau}}\|^2)\mathrm{d}r\\
&\quad+C\int_s^t\|(-A)^{\frac{1-\beta}{2}}S^N(t-\lfloor r\rfloor_{\tau})(-A)^{\frac{\beta-1}{2}}Q^{\frac{1}{2}}\|^2_{\mathcal{L}_2(H)}\mathrm{d}r\\
&\leq C(t-s)^{\beta}+C(t-s)^2+C\int_s^t(t-r)^{-1+\beta}\mathrm{d}r\\
&\leq C(t-s)^{\beta},\quad 0<t-s\leq 1.
\end{align*}
For the case of $t-s>1,$ the above H\"older continuity can be obtained by the finiteness of the second moment of $X^{N,\tau}_t.$ 

Noting 
\begin{align*}
X^N_{k+1}-\bar{X}^N_{k+1}=S^N(\tau)(X^N_k-\bar{X}^N_k)+\tau S^N(\tau)(F^N(X^N_k)-F^N(\bar{X}^N_k))
\end{align*}
and Assumption \ref{assumpF_1},
we have
\begin{align*}
&\|X^N_{k+1}-\bar{X}^N_{k+1}\|^2\\
\leq&~ e^{-2\lambda_1\tau} \big(\|X^N_k-\bar{X}^N_k\|^2+2\tau\langle X^N_k-\bar{X}^N_k,F^N(X^N_k)-F^N(\bar{X}^N_k)\rangle
+\tau^2\|F^N(X^N_k)-F^N(\bar{X}^N_k)\|^2\big)\\
%\leq&~ e^{-2\lambda_1\tau}\big(\|X^N_k-\bar{X}^N_k\|^2+2K\tau \|X^N_k-\bar{X}^N_k\|^2+L_{F}^2\tau^2\|X^N_k-\bar{X}^N_k\|^2\big)\\
\leq&~ e^{-2\lambda_1\tau}(1+2K\tau+L_F^2\tau^2)\|X^N_k-\bar{X}^N_k\|^2.
\end{align*}
Taking the $p$th power and the expectation on both sides of the above equation leads to
\begin{align*}
\mathbb{E}\|X^N_{k+1}-\bar{X}^N_{k+1}\|^{2p}&\leq e^{-2\lambda_1p\tau}(1+2K\tau+L_F^2\tau^2)^p\mathbb{E}\|X^N_k-\bar{X}^N_k\|^{2p}\\
&\leq (1+2K\tau+L_F^2\tau^2)^{(k+1)p}e^{-2\lambda_1\tau (k+1)p}\mathbb{E}\|X^N_0-\bar{X}^N_0\|^{2p}\\
&\leq (e^{-(\lambda_1-K)\tau})^{(k+1)p}\mathbb{E}\|X^N_0-\bar{X}^N_0\|^{2p},
\end{align*}
where in the last step we use the assumption $\tau\leq \frac{\lambda_1-K}{4L_F^2}.$
% such that $-2(\lambda_1-K)\tau+L_F^2\tau^2\leq -(\lambda_1-K)\tau<0.$ Hence the proof is finished.
Finally, \eqref{full1} can be proved as
\begin{align*}
|\mathbb{E}\psi(X^N_k)-\pi^N_{\tau}(\psi)|&=|\mathbb{E}\psi(X^N_k)-\psi(\tilde{X}^N_k)|\leq \|\psi\|_{1,\infty}(\mathbb{E}\|X^N_k-\tilde{X}^N_k\|^2)^{\frac{1}{2}}\\
&\leq C\|\psi\|_{1,\infty}(1+\mathbb{E}\|X^N_0\|^2)e^{-\frac{1}{2}(\lambda_1-K)t_k},
\end{align*}
where $\tilde{X}^N_k$ is the solution of \eqref{fulldiscrete} with the law of the initial datum $\tilde{X}^N_0$ being the invariant measure $\pi^N_{\tau}.$
The proof is finished.
\end{proof}

\subsection{Proof of Proposition \ref{propcon}}
\begin{proof}
We introduce an auxiliary process $Y^N_t$ which satisfies the following SPDE
\begin{align*}
\mathrm{d}Y^N_t=A^NY^N_t\mathrm{d}t+S^N(t-\lfloor t\rfloor_{\tau})F^N(X^{N,\tau}_{\lfloor t\rfloor_{\tau}})\mathrm{d}t+P^N\mathrm{d}W(t),\quad Y^N_0=P^NX_0.
\end{align*}
Then \eqref{semigroup2} and \eqref{semigroup3} imply that
\begin{align*}
\mathbb{E}\|X^{N,\tau}_t-Y^N_t\|^2&=\mathbb{E}\big\|\int_0^tS^{N}(t-s)(S^N(s-\lfloor s\rfloor_{\tau})-\mathrm{Id})\mathrm{d}W(s)\big\|^2\\
&\leq\int_0^t\mathbb{E}\|S^N(t-s)(S^N(s-\lfloor s\rfloor_{\tau})-\mathrm{Id})Q^{\frac{1}{2}}\|_{\mathcal{L}_2(H)}^2\mathrm{d}s\\
&\leq \|(-A)^{\frac{\beta-1}{2}}Q^{\frac{1}{2}}\|^2_{\mathcal{L}_2(H)}\int_0^t\|(-A^N)^{\frac{1}{2}}S^N(t-s)(-A^N)^{-\frac{\beta}{2}}(S^N(s-\lfloor s\rfloor_{\tau})-\mathrm{Id})\|^2_{\mathcal{L}(H)}\mathrm{d}s\\
&\leq C\tau^{\beta}.
\end{align*}
It\^{o}'s formula, together with Poincar\'e's inequality and Assumption \ref{assumpF_1} yields that
\begin{align*}
&\frac{1}{2}\mathrm{d}\|{Y}^N_t-X^N(t)\|^2\\
=&~\langle A^N({Y}^N_t-X^N(t)),{Y}^N_t-X^N(t)\rangle\mathrm{d}t
+\big\langle S^N(t-\lfloor t\rfloor_{\tau})F^N({X}^{N,\tau}_{\lfloor t\rfloor_{\tau}})-F^N(X^N(t)),{Y}^N_t-X^N(t)\big\rangle \mathrm{d}t\\
\leq& -(\lambda_1-K)\|{Y}^N_t-X^N(t)\|^2\mathrm{d}t+\big\langle S^N(t-\lfloor t\rfloor_{\tau})F^N({X}^{N,\tau}_{\lfloor t\rfloor_{\tau}})-F^N({X}^{N,\tau}_{\lfloor t\rfloor_{\tau}}),{Y}^N_t-X^N(t)\big\rangle \mathrm{d}t\\
&+\big\langle F^N(X^{N,\tau}_{\lfloor t\rfloor_{\tau}})-F^N(X^{N,\tau}_t),Y^N_t-X^N(t)\big\rangle\mathrm{d}t+\big\langle F^N(X^{N,\tau}_t)-F^N(Y^N_t),
 Y^N_t-X^N(t)\big\rangle\mathrm{d}t\\
\leq& -(\lambda_1-K)\|{Y}^N_t-X^N(t)\|^2\mathrm{d}t+\big\langle (S^N(t-\lfloor t\rfloor_{\tau})-\mathrm{Id})F^N({X}^{N,\tau}_{\lfloor t\rfloor_{\tau}}),{Y}^N_t-X^N(t)\big\rangle \mathrm{d}t\\
&+L_F\|{Y}^N_t-X^N(t)\|(\|{X}^{N,\tau}_t-{X}^{N,\tau}_{\lfloor t\rfloor_{\tau}}\|+\|X^{N,\tau}_t-Y^N_t\|)\mathrm{d}t.
\end{align*}
Hence, 
\begin{align}\label{integral}
\|Y^N_t-X^N(t)\|^2&\leq -2\int_0^t(\lambda_1-K)\|Y^N_s-X^N(s)\|^2\mathrm{d}s+2J_1\notag\\
&\quad+\int_0^t2L_F\|Y^N_s-X^N(s)\|
 (\|X^{N,\tau}_s-X^{N,\tau}_{\lfloor s\rfloor_{\tau}}\|+\|X^{N,\tau}_s-Y^N_s\|)\mathrm{d}s,
\end{align}
where
\begin{align*}
&J_1:=\int_0^t\langle (S^N(s-\lfloor s\rfloor_{\tau})-\mathrm{Id})F^N(X^{N,\tau}_{\lfloor s\rfloor_{\tau}}),Y^N_s-X^N(s)\rangle\mathrm{d}s.
\end{align*}
By mild forms of $Y^N_s$ and $X^N(s)$, we have
\begin{align*}
Y^N_s-X^N(s)&=\int_0^sS^N(s-\lfloor r\rfloor_{\tau})F^N(X^{N,\tau}_{\lfloor r\rfloor_{\tau}})-S^N(s-r)F^N(X^N(r))\mathrm{d}r\\
&=\int_0^sS^N(s-r)(F^N(X^{N,\tau}_r)-F^N(X^N(r)))\mathrm{d}r\\
&\quad+\int_0^sS^N(s-r)(F^N(X^{N,\tau}_{\lfloor r\rfloor_{\tau}})-F^N(X^{N,\tau}_r))\mathrm{d}r\\
&\quad+\int_0^s(S^N(s-{\lfloor r\rfloor_{\tau}})-S^N(s-r))F^N(X^{N,\tau}_{\lfloor r\rfloor_{\tau}})\mathrm{d}r.
\end{align*}
Hence, we split $J_1$ into three parts correspondingly, denoted by $J_{1,1},J_{1,2},J_{1,3}.$ 

For the term $J_{1,1},$
\begin{align*}
J_{1,1}&=\int_0^t\Big\langle (-A^N)^{-\frac{\beta}{2}}(S^N(s-\lfloor s\rfloor_{\tau})-\mathrm{Id})F^N(X^{N,\tau}_{\lfloor s\rfloor_{\tau}}),\\&\qquad \quad\int_0^s(-A^N)^{\frac{\beta}{2}}S^N(s-r)(F^N(X^{N,\tau}_r)-F^N(X^N(r)))\mathrm{d}r\Big\rangle\mathrm{d}s\\
&\leq \gamma\int_0^t\int_0^s(s-r)^{-\frac{\beta}{2}}e^{-\frac{\lambda_1(s-r)}{2}}\|X^{N,\tau}_r-X^N(r)\|^2\mathrm{d}r\mathrm{d}s\\
&\quad+\frac{1}{4\gamma}\int_0^t\int_0^s(s-r)^{-\frac{\beta}{2}}e^{-\frac{\lambda_1(s-r)}{2}}\tau^{\beta}\|F^N(X^{N,\tau}_{\lfloor s\rfloor_{\tau}})\|^2\mathrm{d}r\mathrm{d}s\\
&\leq C(\beta,\lambda_1)\gamma\int_0^t\|X^{N,\tau}_r-X^N(r)\|^2\mathrm{d}r
+\frac{1}{4\gamma}C(\beta,\lambda_1)\tau^{\beta}\int_0^tL_F^2(1+\|X^{N,\tau}_{\lfloor s\rfloor_{\tau}}\|^2)\mathrm{d}s,
\end{align*}
where $C(\beta,\lambda_1)=\int_0^{\infty}s^{-\frac{\beta}{2}}e^{-\frac{\lambda_1s}{2}}\mathrm{d}s<\infty$ and in the last step we have used the Fubini theorem. Take $\gamma$ small enough such that $ C(\beta,\lambda_1)\gamma\leq \frac{\lambda_1-K}{8}$, then 
\begin{align*}
\mathbb{E}J_{1,1}\leq &~\frac{\lambda_1-K}{4}\int_0^t\mathbb{E}\|Y^N_r-X^N(r)\|^2\mathrm{d}r+\frac{\lambda_1-K}{4}\int_0^t\mathbb{E}\|X^{N,\tau}_r-Y^N_r\|^2\mathrm{d}r\\
&+C\tau^{\beta}\int_0^tL_F^2(1+\mathbb{E}\|X^{N,\tau}_{\lfloor s\rfloor_{\tau}}\|^2)\mathrm{d}s\\
\leq& ~\frac{\lambda_1-K}{4}\int_0^t\mathbb{E}\|Y^N_r-X^N(r)\|^2\mathrm{d}r+C\tau^{\beta}t.
\end{align*}

Terms $J_{1,2}$ and $J_{1,3}$ can be proved similarly. To be specific,
for the term $J_{1,2},$ 
\begin{align*}
J_{1,2}&=\int_0^t\Big\langle (-A^N)^{-\frac{\beta}{2}}(S^N(s-\lfloor s\rfloor_{\tau})-\mathrm{Id})F^N(X^{N,\tau}_{\lfloor s\rfloor_{\tau}}),\\&\qquad \quad\int_0^s(-A^N)^{\frac{\beta}{2}}S^N(s-r)(F^N(X^{N,\tau}_{\lfloor r\rfloor_{\tau}})-F^N(X^{N,\tau}_r))\mathrm{d}r\Big\rangle\mathrm{d}s\\
&\leq C(\beta,\lambda_1)L^2_F\int_0^t\|X^{N,\tau}_r-X^{N,\tau}_{\lfloor r\rfloor_{\tau}}\|^2\mathrm{d}r+C(\beta,\lambda_1)\tau^{\beta}\int_0^tL^2_F(1+\|X^{N,\tau}_{\lfloor s\rfloor_{\tau}}\|^2)\mathrm{d}s.
\end{align*}
Hence, by the H\"older's continuity of $X^{N,\tau}_r,$ we get
$
\mathbb{E}J_{1,2}\leq C\tau^{\beta}t.
$
For the term $J_{1,3},$ when $\beta<1,$ we have
\begin{align*}
J_{1,3}&=\int_0^t\Big\langle (-A^N)^{-\frac{\beta}{2}}(S^N(s-\lfloor s\rfloor_{\tau})-\mathrm{Id})F^N(X^{N,\tau}_{\lfloor s\rfloor_{\tau}}),\\
&\qquad\quad\int_0^s(-A^N)^{\beta}S^N(s-r)(-A^N)^{-\frac{\beta}{2}}(S^N(r-\lfloor r\rfloor_{\tau})-\mathrm{Id})F^N(X^{N,\tau}_{\lfloor r\rfloor_{\tau}})\mathrm{d}r\Big\rangle\mathrm{d}s\\
&\leq 2C(\beta,\lambda_1)\tau^{\beta}\int_0^t
\|F^N(X^{N,\tau}_{\lfloor s\rfloor_{\tau}})\|^2\mathrm{d}s,
\end{align*}
and when $\beta=1,$ we deduce from chain's rule that
\begin{align*}
J_{1,3}&=\int_0^t\Big\langle (-A^N)^{-\frac{1}{2}}(S^N(s-\lfloor s\rfloor_{\tau})-\mathrm{Id})(-A^N)^{\frac{1}{2}}F^N(X^{N,\tau}_{\lfloor s\rfloor_{\tau}}),\\
&\qquad \quad\int_0^sS^N(s-r)(-A^N)^{-\frac{1}{2}}(S^N(r-\lfloor r\rfloor_{\tau})-\mathrm{Id})(-A^N)^{\frac{1}{2}}F^N(X^{N,\tau}_{\lfloor r\rfloor_{\tau}})\mathrm{d}r\Big\rangle\mathrm{d}s\\
&\leq C\tau\int_0^t\int_0^se^{-\lambda_1(s-r)}\|(-A^N)^{\frac{1}{2}}F^N(X^{N,\tau}_{\lfloor s\rfloor_{\tau}})\|^2+\|(-A^N)^{\frac{1}{2}}F^N(X^{N,\tau}_{\lfloor r\rfloor_{\tau}})\|^2)\mathrm{d}r\mathrm{d}s\\
&\leq C\tau\int_0^t\int_0^se^{-\lambda_1(s-r)}L^2_F(\|X^{N,\tau}_{\lfloor s\rfloor_{\tau}}\|^2_1+\|X^{N,\tau}_{\lfloor r\rfloor_{\tau}}\|^2_1)\mathrm{d}r\mathrm{d}s.
\end{align*}
Hence, we have
$\mathbb{E}J_{1,3}\leq C\tau^{\beta}t.$

Taking the expectation on both sides of \eqref{integral} yields
\begin{align*}
\mathbb{E}\|Y^N_t-X^N(t)\|^2\leq -(\lambda_1-K)\int_0^t\mathbb{E}\|Y^N_s-X^N(s)\|^2\mathrm{d}s+C\tau^{\beta}t.
\end{align*}
Applying Gr{\"o}nwall's inequality leads to
\begin{align*}
\sup_{t\ge 0}\mathbb{E}\|Y^N_t-X^N(t)\|^2\leq C\tau^{\beta}.
\end{align*}
Hence, the triangle inequality gives
\begin{align*}
\sup_{k\in\mathbb{N}}\mathbb{E}\|{X}^N_{t_k}-X^N(t_k)\|^2\leq C(\|(-A)^{\frac{\beta-1}{2}}Q^{\frac{1}{2}}\|_{\mathcal{L}_2(H)},K)(1+\mathbb{E}\|X_0\|_{\beta}^2)\tau^{\beta}.
\end{align*}
The proof is finished.
\end{proof}

\bibliographystyle{plain}
\bibliography{clt.bib}

\end{document}